\newtheorem{Theorem}[equation]{Theorem}
\newtheorem{Corollary}[equation]{Corollary}
\newtheorem{Lemma}[equation]{Lemma}
\theoremstyle{definition}
\newtheorem{Definition}[equation]{Definition}
\theoremstyle{remark}
\newtheorem{Remark}[equation]{Remark}
\numberwithin{equation}{section}
\DeclareMathOperator{\ev}{ev}
\DeclareMathOperator{\id}{id}
\DeclareMathOperator{\ad}{ad}
\DeclareMathOperator{\row}{row}
\DeclareMathOperator{\col}{col}
\newcommand{\plim}[1][]{\mathop{\varprojlim}\limits_{#1}}
\newcommand{\ve}{\varepsilon}
\begin{document}
\title{An Example of Homomorphisms from Guay's affine Yangians to non-rectangular $W$-algebras}
\author{Mamoru Ueda\thanks{mueda@ualberta.ca}}
\affil{Department of Mathematical and Statistical Sciences, University of Alberta, 11324 89 Ave NW, Edmonton, AB T6G 2J5, Canada}
\date{}
\maketitle

\begin{abstract}
We construct a non-trivial homomorphism from the Guay's affine Yangian associated with $\widehat{\mathfrak{sl}}(n)$ to the universal enveloping algebra of the $W$-algebra  associated with a Lie algebra $\mathfrak{gl}(m+n)$ and a nilpotent element of type $(2^{n},1^{m-n})$ for $m>n$. 
\end{abstract}

\section{Introduction}

A $W$-algebra $\mathcal{W}^k(\mathfrak{g},f)$ is a vertex algebra associated with a finite dimensional reductive Lie algebra $\mathfrak{g}$ and a nilpotent element $f$. It appeared in the study of two dimensional conformal field theories (\cite{Z}) and has been studied by both physicists and mathematicians since 1980's. We call a $W$-algebra associated with $\mathfrak{gl}(n)$ (resp. $\mathfrak{gl}(ln)$) and a principal nilpotent element (resp. a nilpotent element of type of $(l^n)$) a principal (resp. rectangular) $W$-algebra. The AGT (Alday-Gaiotto-Tachikawa) conjecture suggests that there exists a representation of the principal $W$-algebra of type $A$ on the equivariant homology space of the moduli space of $U(r)$-instantons. Schiffmann and Vasserot \cite{SV} gave this representation by using an action of the Yangian associated with $\widehat{\mathfrak{gl}}(1)$ on this equivariant homology space. 

In the rectangular case, the author \cite{U4} constructed a surjective homomorphism from the Guay's affine Yangian (\cite{Gu2} and \cite{Gu1}) to the universal enveloping algebra of a rectangular $W$-algebra of type $A$. The Guay's affine Yangian $Y_{\hbar,\ve}(\widehat{\mathfrak{sl}}(n))$ is a 2-parameter Yangian and is the deformation of the universal enveloping algebra of the central extension of $\mathfrak{sl}(n)[u^{\pm1},v]$. In \cite{KU}, Kodera and the author showed that this homomorphism can be written down by using the coproduct (\cite{Gu1} and \cite{GNW}) and evaluation map (\cite{Gu1} and \cite{K1}) for the Guay's affine Yangian. It is known that the Guay's affine Yangian has a representation on the equivariant homology space of affine Laumon spaces (\cite{FFNR} and \cite{FT}). Similarly to principal $W$-algebras, we expect that we can construct geometric representations of rectangular $W$-algebras by using the results of \cite{U4} and \cite{KU}. In non-rectangular cases, it is conjectured that there exists a similar relationship between the affine shifted Yangian and an iterated $W$-algebra. The iterated $W$-algebra $\mathcal{W}^k_l(\mathfrak{g},(\mathfrak{a}_1,\cdots,\mathfrak{a}_l),(f_1,\cdots,f_l))$ is a vertex algebra associated with a finite dimensional simple Lie algebra $\mathfrak{g}$, an ordered sequence of subalgebras $(\mathfrak{a}_1,\cdots,\mathfrak{a}_L)$, and a sequence $(f_1,\cdots,f_l)$ of nilpotent elements of $(\mathfrak{a}_1,\cdots,\mathfrak{a}_l)$ defined by an iterated quantum Hamiltonian reduction. 
In \cite{CE}, it is conjectured that 
\begin{align*}
&\mathcal{W}^k_l(\mathfrak{gl}(N),(\mathfrak{gl}(m_1s_1),\cdots,\mathfrak{gl}(m_ls_l)),(f_1,\cdots,f_l)\\
&\qquad\qquad\cong\mathcal{W}^k(\mathfrak{gl}(N),f_1+\cdots+f_L)\otimes\bigotimes_{1\leq i<j\leq l}\beta\gamma^{m_im_j(s_j-s_i)},
\end{align*}
where $N=m_1s_1+\cdots+m_Ls_L$, $0<s_1<\cdots<s_L$, $f_i\in\mathfrak{gl}(m_is_i)$ is a nilpotent element of type $(m_i)^{s_i}$ and $\beta\gamma$ is the vertex algebra of a pair of symplectic bosons.
Moreover, it is also conjectured in \cite{CE} that an action of an iterated $W$-algebra of type $A$ on the equivariant homology space of the affine Laumon space will be given through an action of an affine shifted Yangian constructed in \cite {FT}.
Based on this conjecture, we can expect that there exists a non-trivial homomorphism from the affine shifted Yangian to an iterated $W$-algebras associated with a general nilpotent element. However,  tackling this issue is very difficult and remains unresolved.

In finite setting, Brundan-Kleshchev \cite{BK} gave a surjective homomorphism from a shifted Yangian, which is a subalgebra of the finite Yangian associated with $\mathfrak{gl}(n)$, to a finite $W$-algebra (\cite{Pr}) of type $A$ for its general nilpotent element. A finite $W$-algebra $\mathcal{W}^{\text{fin}}(\mathfrak{g},f)$ is an associative algebra associated with a reductive Lie algebra $\mathfrak{g}$ and a nilpotent element $f$ and is a finite analogue of a $W$-algebra $\mathcal{W}^k(\mathfrak{g},f)$ (\cite{DSK1} and \cite{A1}). In \cite{DKV}, De Sole, Kac and Valeri constructed a homomorphism from the finite Yangian of type $A$ to the finite $W$-algebras of type $A$ by using the Lax operator, which is a restriction of the homomorphism given by Brundan-Kleshchev in \cite{BK}.

In this article, motivated by the work of De Sole, Kac and Valeri \cite{DKV}, we construct a homomorphism from the Guay's affine Yangian $Y_{\hbar,\ve}(\widehat{\mathfrak{sl}}(n))$ to the universal enveloping algebra of the $W$-algebra $\mathcal{W}^k(\mathfrak{gl}(m+n),f)$ associated with a Lie algebra $\mathfrak{gl}(m+n)$ and a nilpotent element $f$ of type $(2^{n},1^{m-n})$ for $m>n$. 

In Section 2, we recall the definition of the Guay's affine Yangian and its evaluation map. We also explain the minimalistic presentation of the Guay's affine Yangian, which are used for the proof of the main theorem.
In Section 3, we recall the definition of a $W$-algebra. Moreover, we construct the elements 
\begin{gather*}
\{W^{(1)}_{i,j}\mid 1\leq i\leq m-n, 1\leq j\leq m\text{ or }m-n<i,j\leq m\},\\
\{W^{(2)}_{i,j}\mid m-n<i\leq m, 1\leq j\leq m\},
\end{gather*}
which are strong generators of $\mathcal{W}^{k}(\mathfrak{gl}(m+n),f)$. 
In Section 4, We compute the $\lambda$-brackets among these strong generators. 
The appendix is devoted to the proof of these computations.

In Section 5, we construct an algebra homomorphism from the Guay's affine Yangian to the universal enveloping algebra of $\mathcal{W}^{k}(\mathfrak{gl}(m+n),f)$.
\begin{Theorem}\label{T1}
Let $n\geq3$ and
\begin{gather*}
\hbar=-1,\qquad\ve=k+m+n.
\end{gather*}
Then, there exists an algebra homomorphism 
\begin{equation*}
\Phi\colon Y_{\hbar,\ve}(\widehat{\mathfrak{sl}}(n))\to \mathcal{U}(\mathcal{W}^{k}(\mathfrak{gl}(m+n),f)),
\end{equation*} 
where $\mathcal{U}(\mathcal{W}^{k}(\mathfrak{gl}(m+n),f))$ is the universal enveloping algebra of $\mathcal{W}^{k}(\mathfrak{gl}(m+n),f)$.
\end{Theorem}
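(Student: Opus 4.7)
The plan is to define $\Phi$ explicitly on the Drinfeld generators $X_{i,r}^{\pm}$, $H_{i,r}$ of $Y_{\hbar,\varepsilon}(\widehat{\mathfrak{sl}}(n))$ for $i\in\mathbb{Z}/n\mathbb{Z}$ and $r\in\{0,1\}$, and then verify the finite list of defining relations in the minimal Drinfeld presentation: the Chevalley-type commutators, the quadratic deformed commutators between $H_{i,1}$ and $X_{j,0}^{\pm}$, and the affine Serre relations. By the standard reduction, the images at $r=0,1$ together with these relations determine the entire homomorphism.

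To construct the images I would work in a Miura-type free field realization of $\mathcal{W}^k(\mathfrak{gl}(m+n),f)$ for the good grading associated with the nilpotent $f$ of Jordan type $(2^n,1^{m-n})$. In the rectangular case $m=n$, an explicit homomorphism is already available from \cite{U4,KU} via the coproduct and evaluation map of the Guay's affine Yangian. For the non-rectangular case, the $m-n$ additional size-$1$ Jordan blocks contribute an extra diagonal block of $m-n$ free fields to the Miura description. My strategy is to apply the coproduct $\Delta$ of \cite{Gu1,GNW} to split the Yangian into two factors, send one factor via the rectangular homomorphism and the other via an evaluation-type map to the extra diagonal piece, and identify the resulting element with an element of $\mathcal{U}(\mathcal{W}^k(\mathfrak{gl}(m+n),f))$ through a parabolic embedding coming from the block decomposition. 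The shift $\varepsilon = k+m+n$ should then arise precisely as the contribution of these extra $m-n$ free fields to the level in the Miura formulas; this parallels the finite-setting construction of \cite{DKV}.

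The main obstacle will be verifying the affine Serre relations and the quadratic commutators $[H_{i,1},X_{j,0}^{\pm}]$ with $i,j$ adjacent in the affine Dynkin diagram, where the parameter $\varepsilon$ enters nontrivially and where the identification $\varepsilon=k+m+n$ is forced. The relations involving the affine node $i=0$ are the most delicate, as the corresponding images mix currents from different Jordan-block pieces and involve normal-ordering corrections that do not follow directly from the finite-setting techniques of \cite{DKV}. I expect the hypothesis $n\geq 3$ to enter at this step to rule out low-rank degeneracies of the affine Serre relations. Modulo the rectangular case of \cite{U4}, the verification should reduce to a bounded number of OPE computations involving the $m-n$ extra free fields and the rectangular Miura currents, but carrying these out in closed form is where the real work lies.
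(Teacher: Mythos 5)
Your overall frame---define $\Phi$ on the generators at $r=0,1$ and check the finite list of defining relations---matches the paper, but the mechanism you propose for producing the images is genuinely different from what the paper does, and it has a concrete gap. Your plan hinges on splitting $Y_{\hbar,\ve}(\widehat{\mathfrak{sl}}(n))$ by the coproduct and sending one tensor factor to the rectangular piece and the other ``via an evaluation-type map to the extra diagonal piece.'' The extra diagonal block coming from the $m-n$ size-one Jordan blocks is a $\mathfrak{gl}(m-n)$ (or, column-wise, $\mathfrak{gl}(m)$) current algebra, and no evaluation-type homomorphism from $Y_{\hbar,\ve}(\widehat{\mathfrak{sl}}(n))$ to $U(\widehat{\mathfrak{gl}}(m-n))_{\rm comp}$ or $U(\widehat{\mathfrak{gl}}(m))_{\rm comp}$ is available when $m\neq n$; the evaluation map only targets $U(\widehat{\mathfrak{gl}}(n))_{\rm comp}$. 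This mismatch is exactly what makes the non-rectangular case hard, and your proposal does not overcome it. The paper instead works inside the Kac--Roan--Wakimoto realization $\mathcal{W}^k\subset V^\kappa(\mathfrak{b})$, writes down explicit strong generators $W^{(1)}_{i,j}$, $W^{(2)}_{i,j}$, observes that the $W^{(1)}_{i,j}$ with $i,j>m-n$ close into an affine $\mathfrak{gl}(n)$ current algebra (so the evaluation-map formulas can be transplanted to define an auxiliary map $\widetilde{\ev}$), and then adds the $w^{(2)}$-terms and the correction $\sum_{w\le m-n}W^{(1)}_{w,w}$ by hand; the cross terms in $W^{(2)}_{i,j}$ mixing indices $u\le m-n$ with $u>m-n$ show that the structure is not a tensor-product split, so a parabolic/coproduct factorization of the kind you describe is not what is happening.

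Two further points. First, your verification list (Chevalley-type commutators, the $[H_{i,1},X_{j,0}^\pm]$ relations, affine Serre relations) omits $[H_{i,1},H_{j,1}]=0$. In the paper this is the single most delicate relation (Lemma~5.3): the naive evaluation-map ansatz violates it because the inner product on the diagonal part of the current algebra generated by the $W^{(1)}$'s differs from that of $\widehat{\mathfrak{gl}}(n)$ (an extra $2\delta_{s+u,0}\delta_{p,q}\delta_{i,j}s$ term), and the whole point of the $w^{(2)}t$ corrections is to cancel this discrepancy via the $(W^{(2)})_{(0)}W^{(2)}$ and $(W^{(2)})_{(1)}W^{(2)}$ OPEs. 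By contrast, the affine Serre relations, which you single out as the main obstacle, follow immediately in the paper from the fact that the degree-zero generators land in a genuine $\widehat{\mathfrak{gl}}(n)$ current algebra. Second, the hypothesis $n\ge3$ is simply the range of validity of the presentation of the Guay affine Yangian used here (for $n=2$ the affine Cartan datum and relations are different); it is not needed to rule out degeneracies in the image. As written, your proposal would need a new idea to replace the nonexistent evaluation map on the second tensor factor, and an explicit treatment of $[H_{i,1},H_{j,1}]=0$, before it could be completed.
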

The proof of Theorem~\ref{T1} is due to the OPE calculation results given in Section 4. We check the compatibility of $\Phi$ with the defining relations of the minimalistic presentation of the Guay's affine Yangian by a direct computation.
In Section 6, we explain the relationship between Theorem~\ref{T1} and the homomorphism given in \cite{DKV}.  

Actually, in \cite{U6}, we have extended Theorem~\ref{T1} to a general nilpotent element case. We gave a homomorphism from the Guay's affine Yangian to the universal enveloping algebra of a general $W$-algebra of type $A$ in a similar way to \cite{KU}. For this generalization, we need to extend the definition of the Guay's affine Yangian and construct the homomorphism corresponding to the coproduct for the Guay's affine Yangian. For the proof of the well-definedness of the coproduct, we use Theorem~\ref{T1}. 
We hope that the main result in \cite{U6} will help to resolve the conjecture in \cite{CE}. One of the difficulty for the solving the conjecture is that it is very complicated to make a finite presentation for the shifted Yangian. Thus, we need to consider the easiest case, that is, $\mathcal{W}^k(\mathfrak{gl}(m+n),f)$. Since all of $\lambda$-brackets of  $\mathcal{W}^k(\mathfrak{gl}(m+n),f)$ have already been computed in this article, we suppose that the computation results in this paper will be helpful for solving the conjecture.

\section{Guay's affine Yangian}

Let us recall the definition of the Guay's affine Yangian. The Guay's affine Yangian $Y_{\ve_1,\ve_2}(\widehat{\mathfrak{sl}}(n))$ was first introduced by Guay (\cite{Gu2} and \cite{Gu1}) and is the deformation of the universal enveloping algebra of the central extension of $\mathfrak{sl}(n)[u^{\pm1},v]$.
\begin{Definition}\label{Prop32}
Let $n\geq3$ and a $n\times n$ matrix $(a_{i,j})_{0\leq i,j\leq n-1}$ be
\begin{gather*}
a_{ij} =
	\begin{cases}
	2 &\text{if } i=j, \\
	         -1&\text{if }j=i\pm1,\\
	        -1 &\text{if }(i,j)=(0,n-1),(n-1,0),\\
		0  &\text{otherwise.}
	\end{cases}
\end{gather*}
The Guay's affine Yangian $Y_{\hbar,\ve}(\widehat{\mathfrak{sl}}(n))$ is the associative algebra generated by $X_{i,r}^{+}, X_{i,r}^{-}, H_{i,r}$ $(i \in \{0,1,\cdots, n-1\}, r = 0,1)$ subject to the following defining relations:
\begin{gather}
[H_{i,r}, H_{j,s}] = 0,\label{Eq2.1}\\
[X_{i,0}^{+}, X_{j,0}^{-}] = \delta_{ij} H_{i, 0},\label{Eq2.2}\\
[X_{i,1}^{+}, X_{j,0}^{-}] = \delta_{ij} H_{i, 1} = [X_{i,0}^{+}, X_{j,1}^{-}],\label{Eq2.3}\\
[H_{i,0}, X_{j,r}^{\pm}] = \pm a_{ij} X_{j,r}^{\pm},\label{Eq2.4}\\
[\tilde{H}_{i,1}, X_{j,0}^{\pm}] = \pm a_{ij}\left(X_{j,1}^{\pm}\right),\text{ if }(i,j)\neq(0,n-1),(n-1,0),\label{Eq2.5}\\
[\tilde{H}_{0,1}, X_{n-1,0}^{\pm}] = \mp \left(X_{n-1,1}^{\pm}-(\ve+\dfrac{n}{2}\hbar) X_{n-1, 0}^{\pm}\right),\label{Eq2.6}\\
[\tilde{H}_{n-1,1}, X_{0,0}^{\pm}] = \mp \left(X_{0,1}^{\pm}+(\ve+\dfrac{n}{2}\hbar) X_{0, 0}^{\pm}\right),\label{Eq2.7}\\
[X_{i, 1}^{\pm}, X_{j, 0}^{\pm}] - [X_{i, 0}^{\pm}, X_{j, 1}^{\pm}] = \pm a_{ij}\dfrac{\hbar}{2} \{X_{i, 0}^{\pm}, X_{j, 0}^{\pm}\}\text{ if }(i,j)\neq(0,n-1),(n-1,0),\label{Eq2.8}\\
[X_{0, 1}^{\pm}, X_{n-1, 0}^{\pm}] - [X_{0, 0}^{\pm}, X_{n-1, 1}^{\pm}]= \pm\dfrac{\hbar}{2} \{X_{0, 0}^{\pm}, X_{n-1, 0}^{\pm}\} - (\ve+\dfrac{n}{2}\hbar) [X_{0, 0}^{\pm}, X_{n-1, 0}^{\pm}],\label{Eq2.9}\\
(\ad X_{i,0}^{\pm})^{1-a_{ij}} (X_{j,0}^{\pm})= 0 \text{ if }i \neq j, \label{Eq2.10}
\end{gather}
where $\tilde{H}_{i,1}=H_{i,1}-\dfrac{\hbar}{2}H_{i,0}^2$ and we set $\{a,b\}$ as $ab+ba$.
\end{Definition}
\begin{Remark}
The defining relations of $Y_{\hbar,\ve}(\widehat{\mathfrak{sl}}(n))$ are different from those of $Y_{\ve_1,\ve_2}(\widehat{\mathfrak{sl}}(n))$ which is called the Guay's affine Yangian in \cite{K1}. In \cite{K1}, generators of $Y_{\ve_1,\ve_2}(\widehat{\mathfrak{sl}}(n))$ are denoted by 
\begin{equation*}
\{x^\pm_{i,r},h_{i,r}\mid0\leq i\leq n-1,r\in\mathbb{Z}_{\geq0}\}
\end{equation*}
with 2-parameters $\ve_1$ and $\ve_2$.
Actually, the algebra $Y_{\hbar,\ve}(\widehat{\mathfrak{sl}}(n))$ is isomorphic to  $Y_{\ve_1,\ve_2}(\widehat{\mathfrak{sl}}(n))$.
The isomorphism $\Psi$ from $Y_{\hbar,\ve}(\widehat{\mathfrak{sl}}(n))$ to $Y_{\ve_1,\ve_2}(\widehat{\mathfrak{sl}}(n))$ is given by
\begin{gather*}
\Psi(H_{i,0})=h_{i,0},\quad\Psi(X^\pm_{i,0})=x^\pm_{i,0},\\
\Psi(H_{i,1})=\begin{cases}
h_{0,1}&\text{ if $i=0$},\\
h_{i,1}+\dfrac{i}{2}(\ve_1-\ve_2)h_{i,0}&\text{ if $i\neq0$},
\end{cases}\\
\hbar=\ve_1+\ve_2,\qquad
\ve=-n\ve_2.
\end{gather*}
In this paper, we do not use $Y_{a,b}(\widehat{\mathfrak{sl}}(n))$ in the meaning of \cite{K1}.
\end{Remark}

Let us recall the evaluation map for the Guay's affine Yangian (see \cite{Gu1} and \cite {K1}). First, we consider the Lie algebra 
\begin{equation*}
\widehat{\mathfrak{gl}}(n)=\Big(\bigoplus_{s\in\mathbb{Z}}\limits\bigoplus_{1\leq i,j\leq n}\limits E_{i,j}t^s\Big)\oplus\mathbb{C}c\oplus\mathbb{C}z
\end{equation*}
whose commutator relations are determined by
\begin{gather*}
[E_{p,q}t^s,E_{i,j}t^u]=\delta_{i,q}E_{p,j}t^{s+u}-\delta_{p,j}E_{i,q}t^{s+u}+s\delta_{i,q}\delta_{p,j}\delta_{s+u,0}c+s\delta_{p,q}\delta_{i,j}\delta_{s+u,0}z,\\
\text{$c$ and $z$ are central elements}.
\end{gather*}
Next, we introduce a completion of $U(\widehat{\mathfrak{gl}}(n))/U(\widehat{\mathfrak{gl}}(n))(z-1)$ following \cite{MNT}. We set the grading of $U(\widehat{\mathfrak{gl}}(n))/U(\widehat{\mathfrak{gl}}(n))(z-1)$ as $\text{deg}(E_{i,j}t^s)=s$ and $\text{deg}(c)=0$.
Then, $U(\widehat{\mathfrak{gl}}(n))/U(\widehat{\mathfrak{gl}}(n))(z-1)$ becomes a graded algebra and we denote the set of the degree $d$ elements of $U(\widehat{\mathfrak{gl}}(n))/U(\widehat{\mathfrak{gl}}(n))(z-1)$ by $U(\widehat{\mathfrak{gl}}(n))_d$. 
We obtain the completion
\begin{equation*}
U(\widehat{\mathfrak{gl}}(n))_{{\rm comp}}=\bigoplus_{d\in\mathbb{Z}}U(\widehat{\mathfrak{gl}}(n))_{{\rm comp},d},
\end{equation*}
where
\begin{equation*}
U(\widehat{\mathfrak{gl}}(n))_{{\rm comp},d}=\plim[N]U(\widehat{\mathfrak{gl}}(n))_{d}/\sum_{r>N}\limits U(\widehat{\mathfrak{gl}}(n))_{d-r}U(\widehat{\mathfrak{gl}}(n))_{r}.
\end{equation*}
The evaluation map for the Guay's affine Yangian is a non-trivial homomorphism from the Guay's affine Yangian to $U(\widehat{\mathfrak{gl}}(n))_{{\rm comp}}$. 
Here after, we set
\begin{gather*}
h_i=\begin{cases}
E_{n,n}-E_{1,1}+c&(i=0),\\
E_{ii}-E_{i+1,i+1}&(1\leq i\leq n-1),
\end{cases}\\
x^+_i=\begin{cases}
E_{n,1}t&(i=0),\\
E_{i,i+1}&(1\leq i\leq n-1),
\end{cases}
\quad x^-_i=\begin{cases}
E_{1,n}t^{-1}&(i=0),\\
E_{i+1,i}&(1\leq i\leq n-1).
\end{cases}
\end{gather*}
\begin{Theorem}[Section 6 in \cite{Gu1} and Theorem~3.8 in \cite{K1}]\label{thm:main}
Suppose that $\hbar\neq0$ and set $c=\dfrac{-n\hbar-\ve}{\hbar}$.
Then, there exists an algebra homomorphism 
\begin{equation*}
\ev_{\hbar,\ve} \colon Y_{\hbar,\ve}(\widehat{\mathfrak{sl}}(n)) \to U(\widehat{\mathfrak{gl}}(n))_{{\rm comp}}
\end{equation*}
uniquely determined by 
\begin{gather*}
	\ev_{\hbar,\ve}(X_{i,0}^{+}) = x_{i}^{+}, \quad \ev_{\hbar,\ve}(X_{i,0}^{-}) = x_{i}^{-},\quad \ev_{\hbar,\ve}(H_{i,0}) = h_{i},
\end{gather*}
\begin{gather*}
	\ev_{\hbar,\ve}(H_{i,1}) = \begin{cases}
		\hbar ch_{0} -\hbar E_{n,n} (E_{1,1}-c) \\
		\ +\hbar \displaystyle\sum_{s \geq 0} \limits\displaystyle\sum_{k=1}^{n}\limits E_{n,k}t^{-s} E_{k,n}t^s-\hbar \displaystyle\sum_{s \geq 0} \displaystyle\sum_{k=1}^{n}\limits E_{1,k}t^{-s-1} E_{k,1}t^{s+1}\\ \qquad\qquad\qquad\qquad\qquad\qquad\qquad\qquad\qquad\qquad\qquad\qquad\qquad\qquad\text{ if $i = 0$},\\
		-\dfrac{i}{2}\hbar h_{i} -\hbar E_{i,i}E_{i+1,i+1} \\
		\quad+ \hbar\displaystyle\sum_{s \geq 0}  \limits\displaystyle\sum_{k=1}^{i}\limits E_{i,k}t^{-s} E_{k,i}t^s+\hbar\displaystyle\sum_{s \geq 0} \limits\displaystyle\sum_{k=i+1}^{n}\limits E_{i,k}t^{-s-1} E_{k,i}t^{s+1}\\
\quad-\hbar\displaystyle\sum_{s \geq 0}\limits\displaystyle\sum_{k=1}^{i}\limits E_{i+1,k}t^{-s}E_{k,i+1}t^s-\hbar\displaystyle\sum_{s \geq 0}\limits\displaystyle\sum_{k=i+1}^{n} \limits E_{i+1,k}t^{-s-1}E_{k,i+1}t^{s+1}\\
\qquad\qquad\qquad\qquad\qquad\qquad\qquad\qquad\qquad\qquad\qquad\qquad\qquad\qquad \text{ if $i \neq 0$},
	\end{cases}
\end{gather*}
\begin{align*}
\ev_{\hbar,\ve}(X^+_{i,1})&=\begin{cases}
\hbar cx_{0}^{+} + \hbar \displaystyle\sum_{s \geq 0} \limits\displaystyle\sum_{k=1}^{n}\limits E_{n,k}t^{-s} E_{k,1}t^{s+1}\\
\qquad\qquad\qquad\qquad\qquad\qquad\qquad\qquad\qquad\qquad\qquad\qquad\qquad\qquad \text{ if $i = 0$},\\
-\dfrac{i}{2}\hbar x_{i}^{+}+ \hbar \displaystyle\sum_{s \geq 0}\limits\displaystyle\sum_{k=1}^i\limits E_{i,k}t^{-s} E_{k,i+1}t^s+\hbar \displaystyle\sum_{s \geq 0}\limits\displaystyle\sum_{k=i+1}^{n}\limits E_{i,k}t^{-s-1} E_{k,i+1}t^{s+1}\\
\qquad\qquad\qquad\qquad\qquad\qquad\qquad\qquad\qquad\qquad\qquad\qquad\qquad\qquad \text{ if $i \neq 0$},
\end{cases}
\end{align*}
\begin{align*}
\ev_{\hbar,\ve}(X^-_{i,1})&=\begin{cases}
\hbar cx_{0}^{-} +\hbar \displaystyle\sum_{s \geq 0} \limits\displaystyle\sum_{k=1}^{n}\limits E_{1,k}t^{-s-1} E_{k,n}t^{s},\\
\qquad\qquad\qquad\qquad\qquad\qquad\qquad\qquad\qquad\qquad\qquad\qquad\qquad\qquad \text{ if $i = 0$},\\
-\dfrac{i}{2}\hbar x_{i}^{-}+\hbar \displaystyle\sum_{s \geq 0}\limits\displaystyle\sum_{k=1}^i\limits E_{i+1,k}t^{-s} E_{k,i}t^{s}+ \hbar \displaystyle\sum_{s \geq 0}\limits\displaystyle\sum_{k=i+1}^{n}\limits E_{i+1,k}t^{-s-1} E_{k,i}t^{s+1}\\
\qquad\qquad\qquad\qquad\qquad\qquad\qquad\qquad\qquad\qquad\qquad\qquad\qquad\qquad \text{ if $i \neq 0$}.
\end{cases}
\end{align*}
\end{Theorem}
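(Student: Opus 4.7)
The plan is to define $\ev_{\hbar,\ve}$ on generators by the displayed formulas and check, one by one, that the defining relations (\ref{Eq2.1})--(\ref{Eq2.10}) of $Y_{\hbar,\ve}(\widehat{\mathfrak{sl}}(n))$ hold in $U(\widehat{\mathfrak{gl}}(n))_{{\rm comp}}$. Before starting, one must verify that the stated images actually live in the completion: each infinite sum $\sum_{s\geq 0} E_{a,k} t^{-s} E_{k,b} t^s$ has a fixed total degree in the grading $\deg(E_{p,q} t^s)=s$, and modulo the ideal generated by degree $>N$ pieces only finitely many $s$ contribute, so the images are well-defined elements of the appropriate $U(\widehat{\mathfrak{gl}}(n))_{{\rm comp},d}$.

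I would then verify the relations in order of complexity. The degree-zero relations (\ref{Eq2.1})--(\ref{Eq2.2}), (\ref{Eq2.4}) at $r=0$, and (\ref{Eq2.10}) reduce to the Chevalley presentation of $\widehat{\mathfrak{sl}}(n)\subset\widehat{\mathfrak{gl}}(n)$, with the affine central element $c$ absorbed into $h_0$; these follow immediately from the commutator formulas for $E_{p,q} t^s$. For the relations involving one degree-one generator --- that is, (\ref{Eq2.3}), (\ref{Eq2.4}) at $r=1$, and (\ref{Eq2.5})--(\ref{Eq2.7}) --- I would expand each relevant commutator $[\ev_{\hbar,\ve}(H_{i,1}),x_j^\pm]$ or $[\ev_{\hbar,\ve}(X^+_{i,1}),x_j^-]$ against the two-factor sums in $\ev_{\hbar,\ve}(H_{i,1})$ or $\ev_{\hbar,\ve}(X^\pm_{i,1})$ term by term. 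Most of the contributions telescope in the summation variable $s$ and collapse to a finite expression; the quadratic piece $-\tfrac{\hbar}{2}h_i^2$ inside $\tilde H_{i,1}$ cancels the diagonal remainder from the telescoping, so that (\ref{Eq2.5}) comes out clean. In the boundary pair $(i,j)=(0,n-1)$ or $(n-1,0)$, the central constant $\hbar c\, h_0$ (respectively $\hbar c\, x^\pm_0$) sitting inside $\ev_{\hbar,\ve}(H_{0,1})$ (respectively $\ev_{\hbar,\ve}(X^\pm_{0,1})$), together with the identification $c=(-n\hbar-\ve)/\hbar$, produces precisely the additive shift by $\mp(\ve+\tfrac{n}{2}\hbar)$ demanded in (\ref{Eq2.6})--(\ref{Eq2.7}).

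The hardest relations are the Serre-like ones (\ref{Eq2.8})--(\ref{Eq2.9}). I would compute $[\ev_{\hbar,\ve}(X^\pm_{i,1}),\ev_{\hbar,\ve}(X^\pm_{j,0})]-[\ev_{\hbar,\ve}(X^\pm_{i,0}),\ev_{\hbar,\ve}(X^\pm_{j,1})]$ by expanding both sides as double sums in $s$, applying the $\widehat{\mathfrak{gl}}(n)$ commutation relations termwise, and reindexing to align the two contributions. After cancellation the infinite tails vanish and the residual finite part produces $\pm a_{ij}\tfrac{\hbar}{2}\{x_i^\pm,x_j^\pm\}$; in the boundary case the central constant inside $\ev_{\hbar,\ve}(X^\pm_{0,1})$ yields the additional correction $-(\ve+\tfrac{n}{2}\hbar)[x_0^\pm,x_{n-1}^\pm]$ in (\ref{Eq2.9}). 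The main obstacle is the bookkeeping in this last step: many infinite sums must be manipulated simultaneously, the reindexings must be justified inside the completion, and the prefactors in $\hbar$ and $\ve$ must line up exactly. Since Guay and Kodera have already carried this out in \cite{Gu1} and \cite{K1}, I would follow their template, presenting the boundary case $(i,j)=(0,n-1)$ of (\ref{Eq2.9}) as a representative example in which the twist $\ve+\tfrac{n}{2}\hbar$ emerges explicitly, and indicating how the bulk cases reduce to the same telescoping mechanism without the central correction.
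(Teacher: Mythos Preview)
The paper does not prove this theorem at all: it is stated with the attribution ``Section~6 in \cite{Gu1} and Theorem~3.8 in \cite{K1}'' and no argument is supplied. Your proposal is therefore not so much a different proof as a sketch of what the cited proofs of Guay and Kodera actually do; the paper simply imports the result.

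That said, your outline matches the strategy of those references reasonably well, with one caveat. You list (\ref{Eq2.1}) among the ``degree-zero relations'' that follow immediately from the Chevalley presentation, but the case $r=s=1$ of (\ref{Eq2.1}), namely $[\ev_{\hbar,\ve}(H_{i,1}),\ev_{\hbar,\ve}(H_{j,1})]=0$, is not a degree-zero statement and is in fact the most delicate check: both sides are infinite sums of quadratic terms, and one must show the commutator vanishes identically in the completion. (Indeed, the analogue of this computation in the $W$-algebra setting occupies the bulk of Lemma~\ref{Claim1236} later in the paper.) Your proposal treats (\ref{Eq2.8})--(\ref{Eq2.9}) as the hardest relations and never returns to $[H_{i,1},H_{j,1}]$; if you were actually writing out the proof rather than citing it, this would be a genuine gap.
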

\section{Explicit generators of general $W$-algebras}
\subsection{$W$-algebras}
In this subsection, we recall the definition of a $W$-algebra. We fix some notations for vertex algebras. For a vertex algebra $V$, we denote the generating field associated with $v\in V$ by $v(z)=\displaystyle\sum_{n\in\mathbb{Z}}\limits v_{(n)}z^{-n-1}$. We also denote the OPE of $V$ by
\begin{equation*}
u(z)v(w)\sim\displaystyle\sum_{s\geq0}\limits \dfrac{(u_{(s)}v)(w)}{(z-w)^{s+1}}
\end{equation*}
for all $u, v\in V$. We denote the vacuum vector (resp.\ the translation operator) by $|0\rangle$ (resp.\ $\partial$).

Let us take a finite dimensional reductive Lie algebra $\mathfrak{g}$, a nilpotent element $f\in\mathfrak{g}$, a non-degenerate symmetric invariant bilinear form $(\ ,\ )$ and a complex number $k$. We fix an $\mathfrak{sl}(2)$-triple $(e,h,f)$ satisfying that 
\begin{gather*}
\mathfrak{g}=\bigoplus_{j\in\frac{1}{2}\mathbb{Z}}\mathfrak{g}_j=\bigoplus_{j\in\frac{1}{2}\mathbb{Z}}\{x\in\mathfrak{g}\mid [h,x]=2jx\},\\ f\in\mathfrak{g}_{-1},
\end{gather*}
and $\ad(f)\colon\mathfrak{g}_{\frac{1}{2}}\to\mathfrak{g}_{-\frac{1}{2}}$ is an isomorphism.
We choose a basis $\{u_i\}_{i\in S'}$ of $\mathfrak{g}_{\frac{1}{2}}$ and extend this basis to  $\{u_i\}_{i\in \tilde{S}}$ of $\mathfrak{g}$. We take $S\subset\tilde{S}$ to satisfy that $\{u_i\}_{i\in S}$ is a basis of $\mathfrak{g}_+=\bigoplus_{j>0}\mathfrak{g}_j$. We also denote by $\mathfrak{b}$ a Lie subalgebra $\bigoplus_{j\leq0}\mathfrak{g}_j$. We introduce the inner product on $\mathfrak{b}$ by
\begin{equation*}
\kappa(x,y)=k(x|y)+\dfrac{1}{2}(\kappa_{\mathfrak{g}}(x,y)-\kappa_{\mathfrak{g}_0}(x,y)-\kappa_{\frac{1}{2}}(x,y)),
\end{equation*}
where $\kappa_{\mathfrak{g}}$ (resp. $\kappa_{\mathfrak{g}_0}$) is the killing form on $\mathfrak{g}$ (resp. $\mathfrak{g}_0$) and $\kappa_{\frac{1}{2}}$ is the trace form of $\mathfrak{g}_0$ on $\mathfrak{g}_{\frac{1}{2}}$. We denote the universal affine vertex algebra associated with $\mathfrak{b}$ and $\kappa$ by $V^\kappa(\mathfrak{b})$. By the PBW theorem, we can identify $V^\kappa(\mathfrak{b})$ with $U(t^{-1}\mathfrak{b}[t^{-1}])$. In order to simplify the notation, here after, we denote the generating field $(ut^{-1})(z)$ as $u(z)$. By the definition of $V^\kappa(\mathfrak{b})$, generating fields $u(z)$ and $v(z)$ satisfy the OPE
\begin{gather}
u(z)v(w)\sim\dfrac{[u,v](w)}{z-w}+\dfrac{\kappa(u,v)}{(z-w)^2}\label{OPE1}
\end{gather}
for all $u,v\in\mathfrak{b}$. 

We construct a vertex algebra $A$ generated by
\begin{equation*}
\{v(z),\psi_i(z),\Phi_j(z)\mid v\in\mathfrak{b},i\in S,j\in S'\}
\end{equation*}
whose relations are given by
\begin{gather*}
v_1(z)v_2(w)\sim\dfrac{[v_1,v_2]}{(z-w)}+\dfrac{\kappa(v_1,v_2)}{(z-w)^2}\text{ for }v_1,v_2\in\mathfrak{b},\\
v(z)\psi_i(w)\sim-\dfrac{\sum_{k\in S}\limits c_{i,k}(v)\psi_k}{(z-w)}\text{ for }v\in\mathfrak{b},\\
v(z)\Phi_i(w)\sim\begin{cases}
\dfrac{\sum_{j\in S'}\limits c_{j,i}(v)\Phi_j(z)}{(z-w)}&\text{ if }v\in\mathfrak{g}_0,\\
0&\text{ if }v\in\mathfrak{g}_j\text{ where }j\neq0,
\end{cases}\\
\psi_i(z)\psi_j(w)\sim\Phi_i(z)\Phi_j(w)\sim\psi_i(z)\Phi_j(w)\sim0,
\end{gather*}
where we set $c_{k,i}(v)$ as $[v,u_i]=\sum_{k\in \widetilde{S}}\limits c_{k,i}(v)u_k$ and $v(z)$ is an even and other generators are odd. 
By Theorem~2.4 in \cite{KRW}, we can give a definition of a $W$-algebra.
\begin{Definition}\label{Def1}
We can define an odd differential
\begin{equation*}
d_0\colon V^\kappa(\mathfrak{b})\to A
\end{equation*}
determined by
\begin{gather*}
d_0(|0\rangle)=0,\qquad[d_0,\partial]=0,\\
\begin{align*}
d_0(v(z))&=\sum_{\beta\in S}\limits([f,v]|u_\beta)\psi_\beta(z)+\sum_{\beta\in S}\limits:\psi_\beta(z)\Phi_{[v,u_\beta]}(z):\\
&\quad-\sum_{\substack{\beta\in S\\ [v,u_\beta]\in\mathfrak{b}}}:\psi^\beta(z)[v,u_\beta](z):+\sum_{\beta\in S}\limits(k(v|u_\beta)+\text{str}_{\mathfrak{g}_+}(p_+(\ad v)(\ad u_\beta)))\partial\psi_\beta(z)\\
&\quad+\delta(v\in\mathfrak{g}_0)\sum_{\alpha\in S_{\frac{1}{2}}}\limits\Phi_{[u_\alpha,v]}(z),
\end{align*}
\end{gather*}
where $p_+$ is a natural projection of $\mathfrak{g}$ to $\mathfrak{g}_+$.
Then, we define a $W$-algebra $\mathcal{W}^k(\mathfrak{g},f,k)$ by
\begin{equation*}
\mathcal{W}^k(\mathfrak{g},f,k)=\{x\in V^\kappa(\mathfrak{b})|d_0(x)=0\}.
\end{equation*}
\end{Definition}
\subsection{Explicit generators}
In this article, we deal with the $W$-algebra associated with a Lie algebra $\mathfrak{g}=\mathfrak{gl}(m+n)$ and a nilpotent element of type $(2^{n},1^{m-n})$ for $m\geq n$.
We take $\{e_{i,j}\}_{1\leq i,j\leq m+n}$ as a unit basis of $\mathfrak{gl}(m+n)$. We consider the nilpotent element  
\begin{equation*}
f=\sum_{m+1\leq j\leq m+n}e_{j,j-n}.
\end{equation*}
We also fix an inner product of $\mathfrak{g}$ determined by
\begin{equation*}
(e_{i,j},e_{p,q})=k\delta_{i,q}\delta_{p,j}+\delta_{i,j}\delta_{p,q}.
\end{equation*}
In order to simplify the notation, we set
\begin{gather*}
\col(i)=\begin{cases}
1&\text{ if }i\leq m,\\
2&\text{ if }i>m,
\end{cases}\qquad\row(i)=\begin{cases}
i&\text{ if }i\leq m,\\
i-n&\text{ if }i>m.
\end{cases}
\end{gather*}
For all $1\leq i,j\leq m+n$, For $m-n<i\leq m$ we set $\hat{i}$ to be $i+n$, and for $m<j\leq m+n$ we set $\tilde{j}$ to be $j-n$.
By using these notations, we can rewrite $f$ as $\sum_{m-n+1\leq j\leq m}\limits e_{\hat{j},j}$. For example, in the case when $m=4,n=3$, we have
\begin{align*}
f&=e_{5,2}+e_{6,3}+e_{7,4}.
\end{align*}
We take an $\mathfrak{sl}(2)$-triple satisfying that $e_{i,j}\in\mathfrak{g}_s$ if and only if $\col(j)-\col(i)=s$.
Then, by the definition of $\mathfrak{b}$ and $\kappa$, we have
\begin{align*}
\mathfrak{b}&=\bigoplus_{\substack{1\leq i,j\leq m+n\\\col(i)\geq\col(j)}}\limits \mathbb{C}e_{i,j}=\bigoplus_{1\leq i,j\leq m}\mathbb{C}e_{i,j}\oplus\bigoplus_{m+1\leq i,j\leq m+n}\mathbb{C}e_{i,j}\oplus\bigoplus_{m-n+1\leq i,j\leq m}\mathbb{C}e_{\hat{i},j}.
\end{align*}
and
\begin{equation*}
\kappa(e_{i,j},e_{p,q})=\begin{cases}
\alpha_1\delta_{i,q}\delta_{p,j}+\delta_{i,j}\delta_{p,q}&\text{ if }1\leq i,j\leq m,\\
\alpha_2\delta_{i,q}\delta_{p,j}+\delta_{i,j}\delta_{p,q}&\text{ if }m+1\leq i,j\leq m+n,\\
0&\text{otherwise},
\end{cases}
\end{equation*}
where $\alpha_1=k+n$ and $\alpha_2=k+m$. By the definition of $S$, we have
\begin{align*}
S&=\{(i,j)\mid1\leq i,j\leq m+n,\col(j)>\col(i)\},\\
\widetilde{S}&=\{(i,j)\mid1\leq i,j\leq m+n\}.
\end{align*}
We consider the following Lie algebra
\begin{equation*}
\mathfrak{a}=\mathfrak{b}\oplus\displaystyle\bigoplus_{\substack{1\leq i,j\leq m+n\\\col(i)>\col(j)}}\limits\mathbb{C}\psi_{i,j}=\mathfrak{b}\oplus\bigoplus_{m-n+1\leq i,j\leq m}\mathbb{C}\psi_{\hat{i},j}
\end{equation*}
whose commutator relations;
\begin{align*}
[e_{i,j},\psi_{p,q}]&=\delta_{j,p}\psi_{i,q}-\delta_{i,q}\psi_{p,j},\\
[\psi_{i,j},\psi_{p,q}]&=0,
\end{align*}
where $e_{i,j}$ is an even element and $\psi_{i,j}$ is an odd element. We set the inner product on $\mathfrak{a}$ such that
\begin{gather*}
\widetilde{\kappa}(e_{i,j},e_{p,q})=\kappa(e_{i,j},e_{p,q}),\qquad\widetilde{\kappa}(e_{i,j},\psi_{p,q})=\widetilde{\kappa}(\psi_{i,j},\psi_{p,q})=0.
\end{gather*}
Then, a vertex algebra $A$ defined in Section 3.1 is the universal affine vertex algebra associated with 
$\mathfrak{a}$ and $\widetilde{\kappa}$ by corresponding $\phi_{(i,j)}\in A$ to $\psi_{j,i}\in V^{\widetilde{\kappa}}(\mathfrak{a})$.

Similarly to $V^\kappa(\mathfrak{b})$, we identify $V^{\widetilde{\kappa}}(\mathfrak{a})$ with $U(t^{-1}\mathfrak{a}[t^{-1}])$. For all $u\in \mathfrak{a}$, let $u[-s]$ be $ut^{-s}$. In this section, we regard $V^{\widetilde{\kappa}}(\mathfrak{a})$ (resp.\ $V^\kappa(\mathfrak{b})$) as a non-associative superalgebra structure by $(-1)$-product. In particular, we obtain
\begin{equation*}
u[-w]\cdot v[-s]=(u[-w])_{(-1)}v[-s].
\end{equation*}
We sometimes omit $\cdot$ in order to simplify the notation. By \cite{KW1} and \cite{KW2}, a $W$-algebra $\mathcal{W}^k(\mathfrak{gl}(m+n),f)$ can be realized as the vertex subalgebra of $V^\kappa(\mathfrak{b})$ as follows.

Let us define an odd differential $d_0 \colon V^{\kappa}(\mathfrak{b})\to V^{\widetilde{\kappa}}(\mathfrak{a})$ determined by
\begin{gather}
d_0(|0\rangle)=0,\\
[d_0,\partial]=0,\label{ee5800}
\end{gather}
\begin{align}
d_0(e_{i,j}[-1])
&=\sum_{\substack{\col(i)>\col(r)\geq\col(j)}}\limits e_{r,j}[-1]\psi_{i,r}[-1]-\sum_{\substack{\col(j)<\col(r)\leq\col(i)}}\limits \psi_{r,j}[-1]e_{i,r}[-1]\nonumber\\
&\quad+\delta(\col(i)>\col(j))\alpha_2\psi_{i,j}[-2]+\psi_{\hat{i},j}[-1]-\psi_{i,\tilde{j}}[-1].\label{ee1}
\end{align}
Here, we assumed that $e_{p,q}=0$ and $\psi_{p,q}= 0$ for out-of-range subscripts. We also supposed that $X_{\hat{i},j}=0$ and $X_{i,\tilde{j}}=0$ ($X=e,\psi$) if $\hat{i}$ and $\tilde{j}$ do not exist.

By Definition~\ref{Def1} we obtain the following theorem.
\begin{Theorem}\label{T125}
The $W$-algebra $\mathcal{W}^k(\mathfrak{gl}(m+n),f)$ is the vertex subalgebra of $V^\kappa(\mathfrak{b})$ defined by
\begin{equation*}
\mathcal{W}^k(\mathfrak{gl}(m+n),f)=\{y\in V^\kappa(\mathfrak{b})\mid d_0(y)=0\}.
\end{equation*}
\end{Theorem}

We construct two kinds of elements $W^{(1)}_{i,j}$ and $W^{(2)}_{i,j}$, which are actually strong generators of $\mathcal{W}^k(\mathfrak{gl}(m+n),f)$. 

\begin{Theorem}
Let us set
\begin{align*}
W^{(1)}_{i,j}&=\sum_{\substack{1\leq h,l\leq m+n,\\\row(h)=i,\row(l)=j,\\\col(h)=\col(l)}}e_{h,l}[-1]\ \text{for }1\leq i\leq m-n,1\leq j\leq m\text{ or }m-n<i,j\leq m,\\
W^{(2)}_{i,j}&=e_{\hat{i},j}[-1]-\alpha_2e_{i,j}[-2]+\sum_{m-n<u\leq m}\limits e_{u,j}[-1]e_{\hat{i},\hat{u}}[-1]-\sum_{1\leq u\leq m-n}\limits e_{u,j}[-1]e_{i,u}[-1]\\
&\qquad\qquad\qquad\qquad\qquad\qquad\qquad\qquad\qquad\qquad \text{for }m-n<i\leq m,1\leq j\leq m.
\end{align*}
Then, the $W$-algebra $\mathcal{W}^k(\mathfrak{gl}(m+n),f)$ has the following strong generators;
\begin{gather*}
\{W^{(1)}_{i,j}\mid 1\leq i\leq m-n, 1\leq j\leq m\text{ or }m-n<i,j\leq m\},\\
\{W^{(2)}_{i,j}\mid m-n<i\leq m, 1\leq j\leq m\}.
\end{gather*}
\end{Theorem}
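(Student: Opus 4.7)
The plan is to invoke the general strong generation theorem for $W$-algebras from \cite{KRW, KW1, KW2}: for any homogeneous basis $\{q_\alpha\}$ of the centralizer $\mathfrak{g}^f$, there is a unique family of elements in $\mathcal{W}^k(\mathfrak{g}, f) = \ker d_0$ whose leading symbols in the PBW / conformal filtration on $V^\kappa(\mathfrak{b})$ are exactly $q_\alpha[-1]$, and these strongly (and freely) generate $\mathcal{W}^k(\mathfrak{g}, f)$. For Jordan type $(2^n, 1^{m-n})$ a direct count gives $\dim \mathfrak{g}^f = m^2 + n^2$, which matches the total number of proposed generators $(m-n)m + n^2 + nm = m^2 + n^2$. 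Hence the theorem reduces to two checks: \textbf{(a)} every $W^{(1)}_{i,j}$ and $W^{(2)}_{i,j}$ lies in $\ker d_0$; \textbf{(b)} their leading symbols form a linearly independent subset of $\mathfrak{g}^f$, and hence by the count a basis.

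Part (b) is a short calculation: the leading symbol of $W^{(1)}_{i,j}$ is $e_{i,j}$ when $1 \le i \le m - n$ and $e_{i,j} + e_{\hat{i}, \hat{j}}$ when $i, j > m - n$, while that of $W^{(2)}_{i,j}$ is $e_{\hat{i}, j}$. A direct bracket computation against $f$ shows each is centralized by $f$; linear independence is immediate from their disjoint supports in the matrix-unit basis of $\mathfrak{g}$.

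Part (a) for $W^{(1)}_{i,j}$ is short: every summand $e_{h,l}$ has $\col(h) = \col(l)$, so both intermediate-column sums in \eqref{ee1} are empty and the $\alpha_2 \psi_{i,j}[-2]$ term is absent. Only the tail $\psi_{\hat{h}, l}[-1] - \psi_{h, \tilde{l}}[-1]$ survives, and as $(h, l)$ ranges over admissible pairs the $\psi_{\hat{h}, l}$ contributions from column-$1$ summands cancel the $\psi_{h, \tilde{l}}$ contributions from column-$2$ summands. The two index regimes $1 \le i \le m - n,\ 1 \le j \le m$ and $i, j > m - n$ are precisely the cases where this pairing is complete, with no uncancelled $\psi$ remaining.

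The main obstacle is $d_0(W^{(2)}_{i,j}) = 0$. Here $e_{\hat{i}, j}[-1]$ has $\col(\hat{i}) > \col(j)$, so $d_0$ produces all three kinds of terms in \eqref{ee1}, and one must also apply $d_0$ to each quadratic correction $e_{u,j}[-1] \cdot e_{\hat{i}, \hat{u}}[-1]$ and $e_{u,j}[-1] \cdot e_{i,u}[-1]$ via the super-Leibniz rule for the odd derivation $d_0$, producing normally-ordered products plus contraction corrections dictated by \eqref{OPE1}. The coefficient $-\alpha_2$ of the $e_{i,j}[-2]$ term is tuned so that, using $[d_0, \partial] = 0$, the derivative term $\alpha_2 \psi_{\hat{i}, j}[-2]$ arising from $d_0(e_{\hat{i}, j}[-1])$ is cancelled; the split of the range of $u$ into $m - n < u \le m$ and $1 \le u \le m - n$ corresponds to the two possible positions of the intermediate column-$1$ index relative to the row range of $i$, and is exactly what is needed for the remaining quadratic $\psi \cdot e$ terms and linear $\psi$ terms to cancel term by term. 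Carrying out this bookkeeping, together with (b) and the KRW / KW theorem, yields the claim.
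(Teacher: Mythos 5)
Your proposal is correct and follows essentially the same route as the paper: verify $d_0$-closedness by the explicit term-by-term cancellation (including the role of the $-\alpha_2 e_{i,j}[-2]$ term in killing $\alpha_2\psi_{\hat{i},j}[-2]$ via $[d_0,\partial]=0$), then match the leading terms with a basis of $\mathfrak{gl}(m+n)^f$ and invoke the Kac--Wakimoto strong generation theorem. Your explicit count $\dim\mathfrak{gl}(m+n)^f=m^2+n^2=(m-n)m+n^2+nm$ is a useful supplementary check that the paper leaves implicit.
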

\begin{proof}
First, we show that $W^{(1)}_{i,j}$ and $W^{(2)}_{i,j}$ are contained in $\mathcal{W}^k(\mathfrak{gl}(m+n),f)$. It is enough to show that $d_0(W^{(r)}_{i,j})=0$.
We only show the case when $r=2$. The case when $r=1$ is proven in a similar way.
By \eqref{ee1}, if $\col(p)=\col(q)$, we obtain
\begin{align}
d_0(e_{p,q}[-1])
&=\delta(m-n<p\leq m)\psi_{\widehat{p},q}[-1]-\delta(\col(q)=2)\psi_{p,\widetilde{q}}[-1].\label{ee307}
\end{align}
If $\col(p)=\col(q)+1=2$, by \eqref{ee1}, we also have
\begin{align}
d_0(e_{p,q}[-1])
&=\sum_{\substack{1\leq r\leq m}}e_{r,q}[-1]\psi_{p,r}[-1]-\sum_{\substack{m+1\leq r\leq m+n}}\psi_{r,q}[-1]e_{p, r}[-1]+\alpha_2\psi_{p,q}[-2].\label{ee2}
\end{align}
By the definition of $W^{(2)}_{i,j}$, we find that
\begin{align}
d_0(W^{(2)}_{i,j})
&=A_1+A_2+A_3+A_4+A_5+A_6,\label{ee3}
\end{align}
where
\begin{gather*}
A_1=d_0(e_{\hat{i},j}[-1]),\ A_2=-\alpha_2d_0(e_{i,j}[-2]),\\
A_3=\sum_{m-n<u\leq m}\limits d_0(e_{u,j}[-1])e_{\hat{i},\hat{u}}[-1],\\
A_4=\sum_{m-n<u\leq m}\limits e_{u,j}[-1]d_0(e_{\hat{i},\hat{u}}[-1]),\\
A_5=-\sum_{1\leq u\leq m-n}\limits d_0(e_{u,j}[-1])e_{i,u}[-1],\\
A_6=-\sum_{1\leq u\leq m-n}\limits e_{u,j}[-1]d_0(e_{i,u}[-1]).
\end{gather*}
We rewrite $A_i$ respectively.
By \eqref{ee2}, we obtain
\begin{align}
A_1
&=\sum_{1\leq r\leq m}e_{r,j}[-1]\psi_{\hat{i},r}[-1]-\sum_{m-n\leq r\leq m}\psi_{\hat{r},j}[-1]e_{\hat{i}, \hat{r}}[-1]+\alpha_2\psi_{\hat{i},j}[-2].\label{ee5}
\end{align}
By \eqref{ee307} and \eqref{ee5800}, we obtain
\begin{align}
A_2
&=-\alpha_2\psi_{\hat{i},j}[-2]+\alpha_2\cdot0\nonumber\\
&=-\alpha_2\psi_{\hat{i},j}[-2].\label{ee6}
\end{align}
since $\tilde{j}$ does not exist.
By \eqref{ee307}, we obtain
\begin{align}
A_3
&=\sum_{m-n<u\leq m}\limits (\psi_{\hat{u},j}[-1]-0)e_{\hat{i},\hat{u}}[-1]\nonumber\\
&=\sum_{m-n<u\leq m}\limits \psi_{\hat{u},j}[-1]e_{\hat{i},\hat{u}}[-1]\label{ee7},\\
A_4
&=\sum_{m-n<u\leq m}\limits e_{u,j}[-1](0-\psi_{\hat{i},u}[-1])\nonumber\\
&=-\sum_{m-n<u\leq m}\limits e_{u,j}[-1]\psi_{\hat{i},u}[-1],\label{ee8}\\
A_5
&=-\sum_{1\leq u\leq m-n}\limits (0-0)e_{i,u}[-1]\nonumber\\
&=0,\label{ee9}\\
A_6
&=-\sum_{1\leq u\leq m-n}\limits e_{u,j}[-1](\psi_{\hat{i},u}[-1]-0)\nonumber\\
&=-\sum_{1\leq u\leq m-n}\limits e_{u,j}[-1]\psi_{\hat{i},u}[-1].\label{ee10}
\end{align}
Here after, in order to simplify the notation, let us denote the $i$-th term of the right hand side of (the number of the equation) by 
$\text{(the number of the equation)}_i$. By a direct computation, we obtain
\begin{gather*}
\eqref{ee5}_1+\eqref{ee8}+\eqref{ee10}=0,\\
\eqref{ee5}_2+\eqref{ee7}=0,\\
\eqref{ee5}_3+\eqref{ee6}=0.
\end{gather*}
Then, adding \eqref{ee5}-\eqref{ee10}, we obtain $d_0(W^{(2)}_{i,j})=0$.

Finally, we show that $W^{(1)}_{i,j}$ and $W^{(2)}_{i,j}$ are strong generators of $\mathcal{W}^k(\mathfrak{gl}(m+n),f)$. By the definition of $f$, a basis of $\mathfrak{gl}(m+n)^f=\{x\in\mathfrak{gl}(m+n)\mid[f,x]=0\}$ is 
\begin{gather*}
\{\sum_{\substack{1\leq h,l\leq m+n,\\\row(h)=p,\col(l)=q,\\\col(h)=\col(l)}}e_{h,l}\mid1\leq p\leq m-n, 1\leq q\leq m\text{ or }m-n<p,q\leq m\}\qquad\qquad\\
\qquad\qquad\cup\{e_{\hat{i},j}\mid m-n<i\leq m,1\leq j\leq m-n\}.
\end{gather*}
Thus, by Theorem 4.1 in \cite{KW1}, $W^{(1)}_{i,j}$ and $W^{(2)}_{i,j}$ become strong generators of $\mathcal{W}^k(\mathfrak{gl}(m+n),f)$.
\end{proof}

\section{OPEs of the $W$-algebra $\mathcal{W}^k(\mathfrak{gl}(m+n),f)$}
Let us recall the definition of a universal enveloping algebra of a vertex algebra in the sense of \cite{FZ} and \cite{MNT}.
For any vertex algebra $V$, let $L(V)$ be the Borchards Lie algebra, that is,
\begin{align}
 L(V)=V{\otimes}\mathbb{C}[t,t^{-1}]/\text{Im}(\partial\otimes\id +\id\otimes\frac{d}{d t})\label{844},
\end{align}
where the commutation relation is given by
\begin{align*}
 [ut^a,vt^b]=\sum_{r\geq 0}\begin{pmatrix} a\\r\end{pmatrix}(u_{(r)}v)t^{a+b-r}
\end{align*}
for all $u,v\in V$ and $a,b\in \mathbb{Z}$. Now, we define the universal enveloping algebra of $V$.
\begin{Definition}[Section~6 in \cite{MNT}]\label{Defi}
We set $\mathcal{U}(V)$ as the quotient algebra of the standard degreewise completion of the universal enveloping algebra of $L(V)$ by the completion of the two-sided ideal generated by
\begin{gather}
(u_{(a)}v)t^b-\sum_{i\geq 0}
\begin{pmatrix}
 a\\i
\end{pmatrix}
(-1)^i(ut^{a-i}vt^{b+i}-(-1)^avt^{a+b-i}ut^{i}),\label{241}\\
|0\rangle t^{-1}-1.\label{242}
\end{gather}
We call $\mathcal{U}(V)$ the universal enveloping algebra of $V$.
\end{Definition}
In Section 5, we will construct a homomorphism from the Guay's affine Yangian to the universal enveloping algebra of $\mathcal{W}^k(\mathfrak{gl}(m+n),f)$. In order to construct the homomorphism, we need to compute the following terms; 
\begin{gather*}
(W^{(1)}_{i,j})_{(u)}W^{(2)}_{s,t}\ (u\geq0),\quad(W^{(2)}_{i,i})_{(0)}W^{(2)}_{j,j},\quad(W^{(2)}_{i,i})_{(1)}W^{(2)}_{j,j}.
\end{gather*}
By Theorem~2.4 in Kac-Roan-Wakimoto~\cite{KRW}, there exists a homomorphism from the universal enveloping algebra of an affinization of $\mathfrak{gl}(m+n)^f$ to $\mathcal{U}(\mathcal{W}^k(\mathfrak{gl}(m+n),f))$.
\begin{Theorem}\label{Tho1}
The following equations hold;
\begin{gather*}
(W^{(1)}_{p,q})_{(0)}W^{(1)}_{i,j}=\delta_{q,i}W^{(1)}_{p,j}-\delta_{p,j}W^{(1)}_{i,q},\\
(W^{(1)}_{p,q})_{(1)}W^{(1)}_{i,j}=\delta_{q,i}\delta_{p,j}(\alpha_1+\delta(p,i>m-n)\alpha_2)|0\rangle+\delta_{p,q}\delta_{i,j}(1+\delta(p,i>m-n))|0\rangle,\\
(W^{(1)}_{p,q})_{(s)}W^{(1)}_{i,j}=0\text{ for all }s>1.
\end{gather*}
In particular, for all $i,j,p,q>m-n$, we obtain
\begin{align*}
&\quad[W^{(1)}_{p,q}t^s,W^{(1)}_{i,j}t^u]\\
&=\delta_{q,i}W^{(1)}_{p,j}t^{s+u}-\delta_{p,j}W^{(1)}_{i,q}t^{s+u}+\delta_{q,i}\delta_{p,j}s(\alpha_1+\alpha_2)|0\rangle t^{s+u-1}+2\delta_{p,q}\delta_{i,j}s|0\rangle t^{s+u-1}\\
&=\delta_{q,i}W^{(1)}_{p,j}t^{s+u}-\delta_{p,j}W^{(1)}_{i,q}t^{s+u}+\delta_{s+u,0}\delta_{q,i}\delta_{p,j}s(\alpha_1+\alpha_2)+2\delta_{s+u,0}\delta_{p,q}\delta_{i,j}s,
\end{align*}
where the last equality is due to \eqref{242}.
\end{Theorem}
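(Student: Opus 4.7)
The plan is to compute the OPEs $(W^{(1)}_{p,q})_{(s)}W^{(1)}_{i,j}$ for all $s\geq 0$ directly from the definition of $W^{(1)}_{i,j}$ and the affine vertex algebra OPE \eqref{OPE1}, and then translate these into commutator relations in $\mathcal{U}(\mathcal{W}^k(\mathfrak{gl}(m+n),f))$ via the Borcherds formula \eqref{241}. Since $V^\kappa(\mathfrak{b})$ is an affine vertex algebra, \eqref{OPE1} has no pole of order at least three, so $(W^{(1)}_{p,q})_{(s)}W^{(1)}_{i,j}=0$ for $s\geq 2$ is automatic; only the first- and second-order poles require work.

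For the first-order pole, I would expand $W^{(1)}_{p,q}(z)W^{(1)}_{i,j}(w)$ as a double sum of $e_{h,l}(z)e_{h',l'}(w)$ over pairs satisfying $\row(h)=p$, $\row(l)=q$, $\col(h)=\col(l)$ and $\row(h')=i$, $\row(l')=j$, $\col(h')=\col(l')$. Each product contributes $(\delta_{l,h'}e_{h,l'}(w)-\delta_{h,l'}e_{h',l}(w))/(z-w)$. The condition $l=h'$ forces $\row(l)=\row(h')$ and $\col(l)=\col(h')$, and combined with $\col(h)=\col(l)$ and $\col(h')=\col(l')$ produces the factor $\delta_{q,i}$ while leaving the remaining pair $(h,l')$ ranging exactly over the index set defining $W^{(1)}_{p,j}$; the symmetric analysis of the $\delta_{h,l'}$ term yields $-\delta_{p,j}W^{(1)}_{i,q}$, giving the claimed formula for the $(0)$-product.

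For the second-order pole, only pairs with both $e_{h,l}$ and $e_{h',l'}$ inside the same diagonal block of $\mathfrak{gl}(m+n)$ contribute, since $\kappa$ vanishes across blocks. Splitting $W^{(1)}_{p,q}$ into its $\col=1$ summand $e_{p,q}[-1]$ (present in every admissible case) and its $\col=2$ summand $e_{p+n,q+n}[-1]$ (present only when $p,q>m-n$), and doing likewise for $W^{(1)}_{i,j}$: the top-left block pairing contributes $\alpha_1\delta_{q,i}\delta_{p,j}+\delta_{p,q}\delta_{i,j}$, while the bottom-right block pairing contributes $\alpha_2\delta_{q,i}\delta_{p,j}+\delta_{p,q}\delta_{i,j}$ but only when both $p,i>m-n$. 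Summing reproduces the stated formula with the coefficient $\alpha_1+\delta(p,i>m-n)\alpha_2$ and the multiplicity $1+\delta(p,i>m-n)$.

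Finally, the commutator formula follows by specializing the Borcherds relation $[ut^a,vt^b]=\sum_{r\geq 0}\binom{a}{r}(u_{(r)}v)t^{a+b-r}$ inside $\mathcal{U}(V)$: only $r=0,1$ survive. The $r=0$ contribution directly yields the $t^{s+u}$ term, while the $r=1$ contribution is a scalar multiple of $|0\rangle t^{s+u-1}$, which equals $\delta_{s+u,0}$ in $\mathcal{U}(V)$ by Definition \ref{Defi} together with the standard relation $|0\rangle_{(r)}=0$ for $r\geq 0$; this accounts for the $s\delta_{s+u,0}$ factor on the central term. The main bookkeeping obstacle is the case distinction based on whether each of $p,q,i,j$ is at most $m-n$ or strictly larger, since the shape of $W^{(1)}$ changes between these regimes; once the admissible index ranges are tracked, every Kronecker delta that fires in the computed OPE corresponds to an admissible pair, so the recombination of the summands into $W^{(1)}_{p,j}$ and $W^{(1)}_{i,q}$ on the right-hand side is consistent.
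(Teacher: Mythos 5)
Your proposal is correct and follows exactly the route the paper intends: Theorem~\ref{Tho1} is stated without proof as a direct computation, namely expanding each $W^{(1)}$ as a sum of currents $e_{h,l}[-1]$, applying the affine OPE \eqref{OPE1} with the inner product $\kappa$ block by block, and converting the $\lambda$-bracket data into commutators in $\mathcal{U}(V)$ via \eqref{844} and \eqref{241}, where $|0\rangle t^{s+u-1}=\delta_{s+u,0}$. Your bookkeeping of which diagonal block contributes $\alpha_1$ versus $\alpha_2$ and of the admissibility of the resulting index pairs is precisely the content the paper leaves implicit, so there is nothing to add.
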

By a direct computation, we obtain the following theorem.
\begin{Theorem}
The following four equations hold;
\begin{align*}
&\quad(W^{(1)}_{p,q})_{(0)}W^{(2)}_{i,j}\\
&=-\delta_{p,j}W^{(2)}_{i,q}+\delta_{i,q}\delta(p>m-n)W^{(2)}_{p,j}-\delta_{i,q}\delta(p\leq m-n)\sum_{w\leq m-n}\limits (W^{(1)}_{w,j})_{(-1)}W^{(1)}_{p,w}\\
&\quad-\delta_{i,q}\delta(p\leq m-n)\alpha_2\partial W^{(1)}_{p,j}+\delta(p\leq m-n,q>m-n)(W^{(1)}_{p,j})_{(-1)}W^{(1)}_{i,q},
\end{align*}
\begin{align*}
&\quad(W^{(1)}_{p,q})_{(1)}W^{(2)}_{i,j}\\
&=\delta_{p,j}\alpha_1\delta(q>m-n)W^{(1)}_{i,q}-\delta_{i,q}\delta(p\leq m-n)(\alpha_2+\alpha_1)W^{(1)}_{p,j}\\
&\quad+\delta_{p,q}\delta(j>m-n)W^{(1)}_{i,j}-\delta_{i,j}\delta(q\leq m-n)W^{(1)}_{p,q}+\delta_{p,j}\delta_{q,i}\sum_{w\leq m-n}\limits W^{(1)}_{w,w},
\end{align*}
\begin{align*}
&\quad(W^{(1)}_{p,q})_{(2)}W^{(2)}_{i,j}\\
&=-\delta_{q,i}\delta_{p,j}(\alpha_1+\alpha_2)\alpha_1|0\rangle-\delta_{p,q}\delta_{i,j}(\delta(p,q\leq m-n)\alpha_1+2\alpha_2)|0\rangle,
\end{align*}
\begin{gather*}
(W^{(1)}_{p,q})_{(s)}W^{(2)}_{i,j}=0\text{ for all }s>2.
\end{gather*}
\end{Theorem}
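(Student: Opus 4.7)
The plan is to compute $(W^{(1)}_{p,q})_{(s)} W^{(2)}_{i,j}$ by distributing over the four summands in the definition of $W^{(2)}_{i,j}$ and repeatedly applying the basic OPE \eqref{OPE1} together with the non-commutative Wick formula
\begin{equation*}
a_{(s)}(b_{(-1)}c) = b_{(-1)}(a_{(s)}c) + \sum_{j=0}^{s}\binom{s}{j}(a_{(j)}b)_{(s-1-j)}c,\qquad s\geq 0.
\end{equation*}
Since $W^{(1)}_{p,q}$ is a sum of weight-one elements $e_{h,l}[-1]$, and by \eqref{OPE1} each $e_{h,l(t)}e_{a,b}$ vanishes for $t\geq 2$, only the terms with $j=0,1$ in the Wick sum contribute; this immediately bounds the pole order by $2$ in the quadratic summands, and yields the $s>2$ vanishing after noting the $e_{i,j}[-2]$ term contributes at most a double pole via the translation operator.

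First I would dispose of the two linear summands in $W^{(2)}_{i,j}$. Namely $(W^{(1)}_{p,q})_{(s)} e_{\hat i,j}[-1]$ reduces to a sum of commutators $[e_{h,l}, e_{\hat i, j}]$ and central terms $\kappa(e_{h,l}, e_{\hat i,j})$ over the index set defining $W^{(1)}_{p,q}$. For the $-\alpha_2 e_{i,j}[-2]$ term I would use $e_{i,j}[-2] = \partial e_{i,j}[-1]$ together with $a_{(s)}\partial b = s\, a_{(s-1)}b + \partial(a_{(s)}b)$; only $s\leq 2$ survive, and the double-pole contribution supplies the $\alpha_2$-part of the central term in the $s=2$ formula.

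Next I would tackle the two quadratic summands $\sum_{u} e_{u,j}[-1]e_{\hat i,\hat u}[-1]$ and $-\sum_{u}e_{u,j}[-1]e_{i,u}[-1]$ using the Wick formula. Each application produces three types of contributions: a ``pass-through'' term where $W^{(1)}_{p,q}$ acts on the inner factor (still a normally ordered quadratic, to be repackaged below); a ``contracted'' term where the $j=0$ piece of Wick produces $[W^{(1)}_{p,q}, e_{u,j}]$ multiplied by the second factor (a normally ordered product which assembles into a $W^{(1)}$); and a ``double contraction'' coming from the $j=1$ piece, which is a scalar and feeds the central term for $s=2$. The key algebraic step is then the recognition
\begin{equation*}
\sum_{m-n<u\leq m} e_{u,j}[-1]e_{\hat i,\hat u}[-1]-\sum_{1\leq u\leq m-n}e_{u,j}[-1]e_{i,u}[-1] = W^{(2)}_{i,j}+\alpha_2 e_{i,j}[-2]-e_{\hat i,j}[-1],
\end{equation*}
which, combined with the definition of $W^{(1)}_{a,b}$ as $\sum_{h,l}e_{h,l}[-1]$, allows the quadratic tails to be reabsorbed into $W^{(2)}_{i,q}$ and the contracted tails into $(W^{(1)}_{w,j})_{(-1)} W^{(1)}_{p,w}$ and $\partial W^{(1)}_{p,j}$ as on the right-hand sides.

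The main obstacle, and the reason the theorem requires care rather than being formal, is the case analysis dictated by how $p,q,i,j$ sit relative to the three blocks $\{1,\ldots,m-n\}$, $\{m-n+1,\ldots,m\}$, $\{m+1,\ldots,m+n\}$. The indicator factors $\delta(p\leq m-n)$, $\delta(q>m-n)$, etc., appearing in the statement arise precisely because the existence of $\hat p$, $\tilde q$, and the ranges in the definitions of $W^{(1)}_{p,q}$ and $W^{(2)}_{i,j}$ force different contractions to survive; in particular, the $(\alpha_1+\alpha_2)\alpha_1$ versus mixed coefficients in the $(W^{(1)}_{p,q})_{(2)}W^{(2)}_{i,j}$ identity come from whether one or both quadratic sums contribute a central double-contraction and whether the $-\alpha_2 e_{i,j}[-2]$ term is ``visible'' to $W^{(1)}_{p,q}$. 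I would organize the verification into the four subcases $\col(p)\lessgtr \col(q)$ combined with $p,q\lessgtr m-n$, and check each formula on the representative indices; the remaining cases follow by the same templates.
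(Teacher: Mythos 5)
The paper gives no argument for this theorem beyond the phrase ``by a direct computation,'' and your plan --- distributing $(W^{(1)}_{p,q})_{(s)}$ over the four summands of $W^{(2)}_{i,j}$ via the non-commutative Wick formula, handling $e_{i,j}[-2]=\partial e_{i,j}[-1]$ with $a_{(s)}\partial b=s\,a_{(s-1)}b+\partial(a_{(s)}b)$, and reassembling the results into $W^{(1)}$'s and $W^{(2)}$'s --- is exactly that computation, correctly organized; I spot-checked the $s=1$ and $s=2$ identities on representative indices and your scheme reproduces the stated coefficients. When you write it out, make explicit that the reassembly step uses the numerical identity $\alpha_2-\alpha_1=m-n$: the sums over $1\leq u\leq m-n$ produce bare factors of $m-n$ that only combine into the stated $\alpha_1$- and $\alpha_2$-coefficients (e.g.\ the $(\alpha_1+\alpha_2)\alpha_1$ in the $s=2$ formula and the $\alpha_1 W^{(1)}_{i,q}$ in the $s=1$ formula) after substituting $m-n=\alpha_2-\alpha_1$.
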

Let us consider the case when $i,j,p,q>m-n$.
\begin{Corollary}\label{COR}
For all $i,j,p,q>m-n$, we obtain
\begin{align}
&\quad[W^{(1)}_{p,q}t^s,W^{(2)}_{i,j}t^u]\nonumber\\
&=-\delta_{p,j}W^{(2)}_{i,q}t^{s+u}+\delta_{i,q}W^{(2)}_{p,j}t^{s+u}\nonumber\\
&\quad+\delta_{p,j}s\alpha_1W^{(1)}_{i,q}t^{s+u-1}+\delta_{p,q}sW^{(1)}_{i,j}t^{s+u-1}+\delta_{p,j}\delta_{q,i}s\sum_{w\leq m-n}\limits W^{(1)}_{w,w}t^{s+u-1}\nonumber\\
&\quad-\delta_{q,i}\delta_{p,j}\delta_{s+u,1}\dfrac{s(s-1)}{2}(\alpha_1+\alpha_2)\alpha_1-\delta_{p,q}\delta_{i,j}\delta_{s+u,1}s(s-1)\alpha_2.\label{Lem1}
\end{align}
\end{Corollary}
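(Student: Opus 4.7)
\medskip

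\noindent\textbf{Proof proposal for Corollary~3.5.} The plan is to apply the commutation relation in the universal enveloping algebra (which is built into Definition~4.1 via the defining relation \eqref{241}) to the pair of fields $W^{(1)}_{p,q}$ and $W^{(2)}_{i,j}$, and then substitute the OPE data we already have from Theorem~3.4, specialized to the ``large index'' regime $i,j,p,q>m-n$.

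\medskip

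First I would use that for $u,v\in V$ and $a,b\in\mathbb{Z}$, the Borcherds commutator formula
\begin{equation*}
[ut^a,vt^b]=\sum_{r\geq 0}\binom{a}{r}(u_{(r)}v)\ts t^{a+b-r}
\end{equation*}
holds in $\mathcal{U}(V)$. Applied with $u=W^{(1)}_{p,q}$, $v=W^{(2)}_{i,j}$, $a=s$, $b=u$, this gives an \emph{a priori} infinite sum, but Theorem~3.4 guarantees $(W^{(1)}_{p,q})_{(r)}W^{(2)}_{i,j}=0$ for all $r>2$. Hence only the $r=0,1,2$ contributions survive, and the computation reduces to substituting the three explicit formulas from Theorem~3.4.

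\medskip

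Next, I would specialize these three formulas under the hypothesis $i,j,p,q>m-n$. Every factor of the form $\delta(\cdot\leq m-n)$ vanishes, every factor $\delta(\cdot>m-n)$ equals $1$, and the sum $\sum_{w\leq m-n}$ (which is an internal sum, not constrained by $p,q,i,j$) remains as is. This immediately yields
\begin{align*}
(W^{(1)}_{p,q})_{(0)}W^{(2)}_{i,j}&=-\delta_{p,j}W^{(2)}_{i,q}+\delta_{i,q}W^{(2)}_{p,j},\\
(W^{(1)}_{p,q})_{(1)}W^{(2)}_{i,j}&=\delta_{p,j}\alpha_1 W^{(1)}_{i,q}+\delta_{p,q}W^{(1)}_{i,j}+\delta_{p,j}\delta_{q,i}\sum_{w\leq m-n}W^{(1)}_{w,w},\\
(W^{(1)}_{p,q})_{(2)}W^{(2)}_{i,j}&=-\delta_{q,i}\delta_{p,j}(\alpha_1+\alpha_2)\alpha_1-2\delta_{p,q}\delta_{i,j}\alpha_2,
\end{align*}
where in the last line the $\delta(p,q\leq m-n)\alpha_1$ term drops out.

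\medskip

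Plugging these into the commutator formula produces $t^{s+u}$, $s\ts t^{s+u-1}$, and $\binom{s}{2}t^{s+u-2}$ contributions. The only subtle point is the $r=2$ term: $(W^{(1)}_{p,q})_{(2)}W^{(2)}_{i,j}$ is a pure scalar multiple of the vacuum $|0\rangle$, and in $L(V)$ the element $|0\rangle t^r$ is $\partial$-exact for every $r\neq -1$ (since $(\partial\otimes\id+\id\otimes d/dt)(|0\rangle\otimes t^{r+1})=(r+1)|0\rangle\otimes t^r$), while $|0\rangle t^{-1}=1$ by the second defining relation of $\mathcal{U}(V)$. Consequently the $r=2$ contribution is nonzero only when $s+u-2=-1$, i.e.\ $s+u=1$, which produces the factor $\delta_{s+u,1}$ and turns $\binom{s}{2}$ into $\tfrac{s(s-1)}{2}$ (respectively $s(s-1)$ after absorbing the $2$ in $2\alpha_2$). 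Collecting the three contributions gives exactly the formula \eqref{Lem1}. There is no real obstacle; the work is entirely bookkeeping, and the only place where one must be careful is recognizing that central/vacuum terms in the $(r)$-products only survive for the single value $s+u=r-1$ that makes $|0\rangle t^{s+u-r}=|0\rangle t^{-1}=1$.
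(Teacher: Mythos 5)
Your proposal is correct and is exactly the derivation the paper intends: the Corollary is stated as an immediate consequence of Theorem 3.4, obtained by specializing the three nonzero $(r)$-products to the regime $i,j,p,q>m-n$ and feeding them into the Borcherds commutator formula, with the $r=2$ scalar term surviving only when $s+u=1$ because $|0\rangle t^{d}$ vanishes in $L(V)$ for $d\neq-1$ and equals $1$ for $d=-1$. Your handling of that last point is the only nontrivial step, and you got it right.
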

By a direct computation, we obtain the following theorem. 
\begin{Theorem}\label{OPE3}
The following equations hold;
\begin{align}
&\quad (W^{(2)}_{p,q})_{(0)}W^{(2)}_{i,j}\nonumber\\
&=\delta(q>m-n)(W^{(2)}_{p,j})_{(-1)}W^{(1)}_{i,q}-\delta(j>m-n)(W^{(1)}_{p,j})_{(-1)}W^{(2)}_{i,q}\nonumber\\
&\quad+\alpha_1\delta(q,j>m-n)(\partial W^{(1)}_{p,j})_{(-1)}W^{(1)}_{i,q}+\delta(q,j>m-n)(\partial W^{(1)}_{p,q})_{(-1)}W^{(1)}_{i,j}\nonumber\\
&\quad-\delta_{q,i}\sum_{w\leq m-n}\limits (W^{(1)}_{w,j})_{(-1)}W^{(2)}_{p,w}-\delta_{q,i}\delta_{p,j}\sum_{x,w\leq m-n}\limits (\partial W^{(1)}_{w,x})_{(-1)}W^{(1)}_{x,w}-\delta_{q,i}\alpha_2 \partial W^{(2)}_{p,j}\nonumber\\
&\quad+\delta_{q,i}\delta(j>m-n)\sum_{w\leq m-n}\limits(\partial W^{(1)}_{p,j})_{(-1)}W^{(1)}_{w,w}\nonumber\\
&\quad-\delta_{p,j}\delta_{q,i}\dfrac{1}{2}(\alpha_1+2\alpha_2)\sum_{x\leq m-n}\limits \partial^2 W^{(1)}_{x,x}-\delta_{i,q}\delta(j>m-n)\dfrac{\alpha_1(\alpha_1+\alpha_2)+1}{2}\partial^2W^{(1)}_{p,j}\nonumber\\
&\quad+\delta_{p,j}\sum_{x\leq m-n}\limits(W^{(1)}_{x,q})_{(-1)}W^{(2)}_{i,x}+\delta_{p,j}\delta(q>m-n)\sum_{x\leq m-n}\limits (\partial W^{(1)}_{x,x})_{(-1)}W^{(1)}_{i,q}\nonumber\\
&\quad-\delta_{i,j}\partial W^{(2)}_{p,q}-\dfrac{1}{2}\delta_{i,j}\delta(q>m-n)(\alpha_1+2\alpha_2)\partial^2W^{(1)}_{p,q}-\dfrac{1}{2}\delta_{i,j}\delta_{p,q}\partial^2\sum_{w\leq m-n}W^{(1)}_{w,w},\label{OPE3-1}
\end{align}
\begin{align}
&\quad (W^{(2)}_{p,q})_{(1)}W^{(2)}_{i,j}\nonumber\\
&=\delta(q,j>m-n)\alpha_1(W^{(1)}_{p,j})_{(-1)}W^{(1)}_{i,q}+\delta(q,j>m-n)(W^{(1)}_{p,q})_{(-1)}W^{(1)}_{i,j}\nonumber\\
&\quad-\delta_{q,i}\alpha_2W^{(2)}_{p,j}+\delta_{q,i}\delta(j>m-n)\sum_{w\leq m-n}\limits (W^{(1)}_{p,j})_{(-1)}W^{(1)}_{w,w}-\delta_{q,i}\alpha_1(\alpha_1+\alpha_2)\partial W^{(1)}_{p,j}\nonumber\\
&\quad-\delta_{p,j}\alpha_2W^{(2)}_{i,q}+\delta_{p,j}\delta(q>m-n)\sum_{x\leq m-n}(W^{(1)}_{x,x})_{(-1)}W^{(1)}_{i,q}\nonumber\\
&\quad-\delta_{i,j}(1+\delta(q\leq m-n))W^{(2)}_{p,q}-2\delta_{i,j}\delta(q>m-n)\alpha_2\partial W^{(1)}_{p,q}\nonumber\\
&\quad-\delta_{p,q}(1+\delta(j\leq m-n))W^{(2)}_{i,j}\nonumber\\
&\quad-\delta_{p,j}\delta_{i,q}\sum_{x,w\leq m-n}(W^{(1)}_{w,x})_{(-1)}W^{(1)}_{x,w}-\delta_{q,i}\delta_{p,j}(\alpha_1+2\alpha_2)\sum_{x\leq m-n}\limits \partial W^{(1)}_{x,x},\label{OPE3-2}
\end{align}
\begin{align}
&\quad(W^{(2)}_{p,q})_{(2)}W^{(2)}_{i,j}\nonumber\\
&=\delta_{p,j}\delta(q>m-n)\alpha_1(\alpha_1+\alpha_2-1)W^{(1)}_{i,q}+\delta_{i,j}\delta(q>m-n)(\alpha_1-2\alpha_2)W^{(1)}_{p,q}\nonumber\\
&\quad-\delta_{i,q}\delta(j>m-n)\alpha_1(\alpha_1+\alpha_2-1)W^{(1)}_{i,q}-\delta_{p,q}\delta(q>m-n)(\alpha_1-2\alpha_2)W^{(1)}_{i,j},\label{OPE3-3}
\end{align}
\begin{align}
&\quad(W^{(2)}_{p,q})_{(3)}W^{(2)}_{i,j}\nonumber\\
&=(1+\alpha_1^2-6\alpha_2^2)\delta_{p,q}\delta_{i,j}|0\rangle\nonumber\\
&\quad+(6\alpha_1\alpha_2(m-n)+\alpha_1+\alpha_2+\alpha_1(m-n)-\alpha_1(m-n)^2-6\alpha_2^2\alpha_1)\delta_{p,j}\delta_{i,q}|0\rangle.\label{OPE3-4}
\end{align}
\end{Theorem}
The proof of \eqref{OPE3-1} and \eqref{OPE3-2} are given in the appendix.

In order to prove the main theorem, we apply Theorem~\ref{OPE3} to the case when $i=j,p=q>m-n$.
By Theorem~\ref{OPE3}, we obtain
\begin{align}
&\quad[W^{(2)}_{p,p}t,W^{(2)}_{i,i}t]\nonumber\\
&=(W^{(2)}_{p,i})_{(-1)}W^{(1)}_{i,p}t^2-(W^{(1)}_{p,i})_{(-1)}W^{(2)}_{i,p}t^2\nonumber\\
&\quad+\alpha_1(\partial W^{(1)}_{p,i})_{(-1)}W^{(1)}_{i,p}t^2+(\partial W^{(1)}_{p,p})_{(-1)}W^{(1)}_{i,i}t^2\nonumber\\
&\quad-\delta_{p,i}\sum_{w\leq m-n}\limits (W^{(1)}_{w,i})_{(-1)}W^{(2)}_{p,w}t^2-\delta_{p,i}\sum_{x,w\leq m-n}\limits (\partial W^{(1)}_{w,x})_{(-1)}W^{(1)}_{x,w}t^2-\delta_{p,i}\alpha_2\partial W^{(2)}_{p,i}t^2\nonumber\\
&\quad+\delta_{p,i}\sum_{w\leq m-n}\limits (\partial W^{(1)}_{p,i})_{(-1)}W^{(1)}_{w,w}t^2-\dfrac{1}{2}\delta_{p,i}(\alpha_1+2\alpha_2)\sum_{x\leq m-n}\limits \partial^2 W^{(1)}_{x,x}t^2\nonumber\\
&\quad-\delta_{i,p}\dfrac{\alpha_1(\alpha_1+\alpha_2)+1}{2}\partial^2W^{(1)}_{p,i}t^2\nonumber\\
&\quad+\delta_{p,i}\sum_{x\leq m-n}\limits (W^{(1)}_{x,p})_{(-1)}W^{(2)}_{i,x}t^2+\delta_{p,i}\sum_{x\leq m-n}\limits (\partial W^{(1)}_{x,x})_{(-1)}W^{(1)}_{i,p}t^2\nonumber\\
&\quad-\partial W^{(2)}_{p,p}t^2-\dfrac{1}{2}(\alpha_1+2\alpha_2)\partial^2W^{(1)}_{p,p}t^2-\dfrac{1}{2}\partial^2\sum_{w\leq m-n}W^{(1)}_{w,w}t^2\nonumber\\
&\quad+\alpha_1(W^{(1)}_{p,i})_{(-1)}W^{(1)}_{i,p}t+(W^{(1)}_{p,p})_{(-1)}W^{(1)}_{i,i}t\nonumber\\
&\quad-\delta_{p,i}\alpha_2W^{(2)}_{p,i}t+\delta_{p,i}\sum_{w\leq m-n}\limits (W^{(1)}_{p,i})_{(-1)}W^{(1)}_{w,w}t-\delta_{p,i}\alpha_1(\alpha_1+\alpha_2)\partial W^{(1)}_{p,i}t\nonumber\\
&\quad-\delta_{p,i}\alpha_2W^{(2)}_{i,p}t+\delta_{p,i}\sum_{x\leq m-n}(W^{(1)}_{x,x})_{(-1)}W^{(1)}_{i,p}t\nonumber\\
&\quad-W^{(2)}_{p,p}t-2\alpha_2\partial W^{(1)}_{p,p}t-W^{(2)}_{i,i}t\nonumber\\
&\quad-\delta_{p,i}\sum_{x,w\leq m-n}(W^{(1)}_{w,x})_{(-1)}W^{(1)}_{x,w}t-\delta_{p,i}(\alpha_1+2\alpha_2)\sum_{x\leq m-n}\limits \partial W^{(1)}_{x,x}t.\label{OPE4}
\end{align}
We rearrange each terms of the right hand side of \eqref{OPE4}. We divide the right hand side of \eqref{OPE4} into 9 pieces;
\begin{align*}
a_{p,i}&=\eqref{OPE4}_3+\eqref{OPE4}_{16},\\
b_{p,i}&=\eqref{OPE4}_4+\eqref{OPE4}_{17},\\
c_{p,i}&=\eqref{OPE4}_5+\eqref{OPE4}_{11},\\
d_{p,i}&=\eqref{OPE4}_6+\eqref{OPE4}_{26},\\
\tilde{e}_{p,i}&=\eqref{OPE4}_7+\eqref{OPE4}_{13}+\eqref{OPE4}_{18}+\eqref{OPE4}_{21}+\eqref{OPE4}_{23}+\eqref{OPE4}_{25},\\
f_{p,i}&=\eqref{OPE4}_8+\eqref{OPE4}_{12}+\eqref{OPE4}_{19}+\eqref{OPE4}_{22},\\
g_{p,i}&=\eqref{OPE4}_9+\eqref{OPE4}_{15}+\eqref{OPE4}_{27},\\
h_{p,i}&=\eqref{OPE4}_{10}+\eqref{OPE4}_{14}+\eqref{OPE4}_{20}+\eqref{OPE4}_{24},\\
k_{p,i}&=\eqref{OPE4}_1+\eqref{OPE4}_2.
\end{align*}
Let us rewrite each terms. By the definition of $c_{p,i}$, $c_{p,i}=0$ is clear. Since
\begin{equation*}
x_{(-1)}y t^2=\sum_{s\geq0}\limits(xt^{-1-s}yt^{2+s}+yt^{1-s}xt^{s})
\end{equation*}
holds by the definition of the universal enveloping algebra of the vertex algebra, we obtain
\begin{align}
k_{p,i}&=(W^{(2)}_{p,i})_{(-1)}W^{(1)}_{i,p}t^2-(W^{(1)}_{p,i})_{(-1)}W^{(2)}_{i,p}t^2\nonumber\\
&=\sum_{s\geq0}\limits(W^{(2)}_{p,i}t^{-1-s}W^{(1)}_{i,p}t^{2+s}+W^{(1)}_{i,p}t^{1-s}W^{(2)}_{p,i}t^{s})\nonumber\\
&\quad-\sum_{s\geq0}\limits(W^{(1)}_{p,i}t^{-1-s}W^{(2)}_{i,p}t^{2+s}+W^{(2)}_{i,p}t^{1-s}W^{(1)}_{p,i}t^{s}).\label{k1}
\end{align}
Since 
\begin{equation*}
(\partial^2 x)t^2=2x,\qquad(\partial x)t=-x
\end{equation*}
hold by the definition of the universal enveloping algebra of the vertex algebra, we obtain
\begin{align}
\tilde{e}_{p,i}&=-\delta_{p,i}\alpha_2\partial W^{(2)}_{p,i}t^2-\partial W^{(2)}_{p,p}t^2-\delta_{p,i}\alpha_2W^{(2)}_{p,i}t-\delta_{p,i}\alpha_2W^{(2)}_{i,p}t-W^{(2)}_{p,p}t-W^{(2)}_{i,i}t\nonumber\\
&=W^{(2)}_{p,p}t-W^{(2)}_{i,i}t,\label{e1}\\
g_{p,i}&=-\delta_{p,i}\dfrac{1}{2}(\alpha_1+2\alpha_2)\sum_{x\leq m-n}\limits \partial^2 W^{(1)}_{x,x}t^2-\dfrac{1}{2}\partial^2\sum_{w\leq m-n}W^{(1)}_{w,w}t^2\nonumber\\
&\quad-\delta_{p,i}(\alpha_1+2\alpha_2)\sum_{x\leq m-n}\limits \partial W^{(1)}_{x,x}t\nonumber\\
&=-\sum_{w\leq m-n}W^{(1)}_{w,w},\label{g1}\\
h_{p,i}&=-\dfrac{\alpha_1(\alpha_1+\alpha_2)+1}{2}\delta_{i,p}\partial^2W^{(1)}_{p,i}t^2-\dfrac{1}{2}(\alpha_1+2\alpha_2)\partial^2W^{(1)}_{p,p}t^2\nonumber\\
&\qquad-\delta_{p,i}\alpha_1(\alpha_1+\alpha_2)\partial W^{(1)}_{p,i}t-2\alpha_2\partial W^{(1)}_{p,p}t\nonumber\\
&=-\alpha_1W^{(1)}_{p,p}-\delta_{i,p}W^{(1)}_{i,i}.\label{h1}
\end{align}
Since
\begin{equation*}
(\partial W^{(1)}_{p,i})_{(-1)}W^{(1)}_{w,w}+(\partial W^{(1)}_{w,w})_{(-1)}W^{(1)}_{i,p}=\partial((W^{(1)}_{w,w})_{(-1)}W^{(1)}_{i,p})
\end{equation*}
holds for $w\leq m-n$ by Theorem~\ref{Tho1}, we obtain
\begin{align}
f_{p,i}&=\delta_{p,i}\sum_{w\leq m-n}\limits (\partial W^{(1)}_{p,i})_{(-1)}W^{(1)}_{w,w}t^2+\delta_{p,i}\sum_{x\leq m-n}\limits (\partial W^{(1)}_{x,x})_{(-1)}W^{(1)}_{i,p}t^2\nonumber\\
&\quad+\delta_{p,i}\sum_{w\leq m-n}\limits (W^{(1)}_{p,i})_{(-1)}W^{(1)}_{w,w}t+\delta_{p,i}\sum_{x\leq m-n}(W^{(1)}_{x,x})_{(-1)}W^{(1)}_{i,p}t\nonumber\\
&=\delta_{p,i}\sum_{w\leq m-n}\limits \partial((W^{(1)}_{i,i})_{(-1)}W^{(1)}_{w,w})t^2+2\delta_{p,i}\sum_{w\leq m-n}\limits (W^{(1)}_{i,i})_{(-1)}W^{(1)}_{w,w}t\nonumber\\
&=0,\label{f1}
\end{align}
where the second equation is due to the definition of the vertex algebras.
We also obtain
\begin{align}
a_{p,i}&=\alpha_1(\partial W^{(1)}_{p,i})_{(-1)}W^{(1)}_{i,p}t^2+\alpha_1(W^{(1)}_{p,i})_{(-1)}W^{(1)}_{i,p}t\nonumber\\
&=\alpha_1\sum_{s\geq0}\limits ((s+1)W^{(1)}_{p,i}t^{-s-1}W^{(1)}_{i,p}t^{s+1}-sW^{(1)}_{i,p}t^{-s}W^{(1)}_{p,i}t^s)\label{a1}
\end{align}
since we have
\begin{align}
&\quad(\partial x)_{(-1)}yt^2+x_{(-1)}yt\nonumber\\
&=\sum_{s\geq0}\limits (\partial xt^{-s-1}yt^{s+2}+yt^{1-s}\partial xt^{s})+\sum_{s\geq0}\limits (xt^{-s-1}yt^{s+1}+yt^{-s}xt^s)\nonumber\\
&=\sum_{s\geq0}\limits ((s+1)xt^{-s-2}yt^{s+2}-syt^{1-s}xt^{s-1})+\sum_{s\geq0}\limits (xt^{-s-1}yt^{s+1}+yt^{-s}xt^s)\nonumber\\
&=\sum_{s\geq0}\limits ((s+1)xt^{-s-1}yt^{s+1}-syt^{-s}xt^s),\label{benri}
\end{align}
where the first and second equations are due to the definition of the universal enveloping algebra of the vertex algebra and the third equation is due to a direct computation.
Similarly to $a_{p,i}$, by \eqref{benri}, we obtain
\begin{align}
b_{p,i}&=(\partial W^{(1)}_{p,p})_{(-1)}W^{(1)}_{i,i}t^2+(W^{(1)}_{p,p})_{(-1)}W^{(1)}_{i,i}t\nonumber\\
&=\sum_{s\geq0}\limits ((s+1)W^{(1)}_{p,p}t^{-s-1}W^{(1)}_{i,i}t^{s+1}-sW^{(1)}_{i,i}t^{-s}W^{(1)}_{p,p}t^s).\label{b1}
\end{align}
By using \eqref{benri}, we can rewrite $d_{p,i}$ as
\begin{align*}
d_{p,i}&=-\delta_{p,i}\sum_{x,w\leq m-n}\limits (\partial W^{(1)}_{w,x})_{(-1)}W^{(1)}_{x,w}t^2-\delta_{p,i}\sum_{x,w\leq m-n}(W^{(1)}_{w,x})_{(-1)}W^{(1)}_{x,w}t\\
&=-\delta_{p,i}\sum_{x,w\leq m-n}\limits\sum_{s\geq0}\limits ((s+1)W^{(1)}_{w,x}t^{-s-1}W^{(1)}_{x,w}t^{s+1}-sW^{(1)}_{x,w}t^{-s}W^{(1)}_{w,x}t^s).
\end{align*}
Since we have
\begin{align*}
\sum_{x,w\leq m-n}\limits\sum_{s\geq0}\limits (s+1)W^{(1)}_{w,x}t^{-s-1}W^{(1)}_{x,w}t^{s+1}=\sum_{x,w\leq m-n}\limits\sum_{s\geq0}\limits sW^{(1)}_{x,w}t^{-s}W^{(1)}_{w,x}t^s
\end{align*}
by changing $x$ and $w$, we obtain
\begin{equation}
d_{p,i}=0.\label{d1}
\end{equation}
By \eqref{k1}-\eqref{d1}, we have
\begin{align}
[W^{(2)}_{p,p}t,W^{(2)}_{i,i}t]&=a_{p,i}+b_{p,i}+\tilde{e}_{p,i}+g_{p,i}+h_{p,i}+k_{p,i}.\label{OPE5}
\end{align}
\section{A homomorphism from the Guay's affine Yangian to the universal enveloping algebra of $\mathcal{W}^{k}(\mathfrak{gl}(m+n),f)$. }
In order to simplify the notation, we will set
\begin{equation*}
w^{(r)}_{i,j}=W^{(r)}_{m-n+i,m-n+j},
\end{equation*}
\begin{Theorem}\label{Main}
Assume that
\begin{equation*}
\hbar=-1,\qquad\ve=n+\alpha_2.
\end{equation*}
There exists an algebra homomorphism 
\begin{equation*}
\Phi\colon Y_{\hbar,\ve}(\widehat{\mathfrak{sl}}(n))\to \mathcal{U}(\mathcal{W}^{k}(\mathfrak{gl}(m+n),f))
\end{equation*} 
determined by
\begin{align*}
\Phi(H_{i,0})=\begin{cases}
w^{(1)}_{n,n}-w^{(1)}_{1,1}+\alpha_1+\alpha_2&\text{ if }i=0,\\
w^{(1)}_{i,i}-w^{(1)}_{i+1,i+1}&\text{ if }i\neq 0,
\end{cases}\\
\Phi(X^+_{i,0})=\begin{cases}
w^{(1)}_{n,1}t&\text{ if }i=0,\\
w^{(1)}_{i,i+1}&\text{ if }i\neq0,
\end{cases}
\quad \Phi(X^-_{i,0})=\begin{cases}
w^{(1)}_{1,n}t^{-1}&\text{ if }i=0,\\
w^{(1)}_{i+1,i}&\text{ if }i\neq0,
\end{cases}
\end{align*}
\begin{align*}
\Phi(H_{i,1})&=\begin{cases}w^{(2)}_{n,n}t-w^{(2)}_{1,1}t+\alpha_1 w^{(1)}_{n,n}+\alpha_1(\alpha_1+\alpha_2)- (\alpha_1+\alpha_2)\Phi(H_{0,0}) +w^{(1)}_{n,n} (w^{(1)}_{1,1}-\alpha_1-\alpha_2)\\
\quad+\displaystyle\sum_{x\leq m-n}\limits W^{(1)}_{x,x}-\displaystyle\sum_{s \geq 0} \limits\displaystyle\sum_{u=1}^{n}\limits w^{(1)}_{n,u}t^{-s} w^{(1)}_{u,n}t^s+\displaystyle\sum_{s \geq 0}\displaystyle\sum_{u=1}^{n}\limits w^{(1)}_{1,u}t^{-s-1} w^{(1)}_{u,1}t^{s+1}\\
\qquad\qquad\qquad\qquad\qquad\qquad\qquad\qquad\qquad\qquad\qquad\qquad\qquad\qquad \text{ if }i=0,\\
w^{(2)}_{i,i}t-w^{(2)}_{i+1,i+1}t+\dfrac{i}{2}\Phi(H_{i,0})+w^{(1)}_{i,i}w^{(1)}_{i+1,i+1}\\
\quad-\displaystyle\sum_{s \geq 0}  \limits\displaystyle\sum_{u=1}^{i}\limits w^{(1)}_{i,u}t^{-s}w^{(1)}_{u,i}t^s-\displaystyle\sum_{s \geq 0} \limits\displaystyle\sum_{u=i+1}^{n}\limits w^{(1)}_{i,u}t^{-s-1} w^{(1)}_{u,i}t^{s+1}\\
\quad+\displaystyle\sum_{s \geq 0}\limits\displaystyle\sum_{u=1}^{i}\limits w^{(1)}_{i+1,u}t^{-s} w^{(1)}_{u,i+1}t^s+\displaystyle\sum_{s \geq 0}\limits\displaystyle\sum_{u=i+1}^{n} \limits w^{(1)}_{i+1,u}t^{-s-1} w^{(1)}_{u,i+1}t^{s+1}\\
\qquad\qquad\qquad\qquad\qquad\qquad\qquad\qquad\qquad\qquad\qquad\qquad\qquad\qquad \text{ if }i\neq0,
\end{cases}
\end{align*}
\begin{align*}
\Phi(X^+_{i,1})&=\begin{cases}
w^{(2)}_{n,1}t^2-
(\alpha_1+\alpha_2)\Phi(X_{0,0}^{+})-\displaystyle\sum_{s \geq 0} \limits\displaystyle\sum_{u=1}^{n}\limits w^{(1)}_{n,u}t^{-s} w^{(1)}_{u,1}t^{s+1}\\
\qquad\qquad\qquad\qquad\qquad\qquad\qquad\qquad\qquad\qquad\qquad\qquad\qquad\qquad \text{ if $i = 0$},\\
w^{(2)}_{i,i+1}t+\dfrac{i}{2}\Phi(X_{i,0}^{+})\\
\quad-\displaystyle\sum_{s \geq 0}\limits\displaystyle\sum_{u=1}^i\limits w^{(1)}_{i,u}t^{-s} w^{(1)}_{u,i+1}t^s-\displaystyle\sum_{s \geq 0}\limits\displaystyle\sum_{u=i+1}^{n}\limits w^{(1)}_{i,u}t^{-s-1} w^{(1)}_{u,i+1}t^{s+1}\\
\qquad\qquad\qquad\qquad\qquad\qquad\qquad\qquad\qquad\qquad\qquad\qquad\qquad\qquad \text{ if $i \neq 0$},
\end{cases}
\end{align*}
\begin{align*}
\Phi(X^-_{i,1})&=\begin{cases}
w^{(2)}_{1,n}+\alpha_1W^{(1)}_{1,n}t^{-1}-(\alpha_1+\alpha_2)\Phi(X_{0,0}^{-})-\displaystyle\sum_{s \geq 0} \limits\displaystyle\sum_{u=1}^{n}\limits w^{(1)}_{1,u}t^{-s-1} w^{(1)}_{u,n}t^s\\
\qquad\qquad\qquad\qquad\qquad\qquad\qquad\qquad\qquad\qquad\qquad\qquad\qquad\qquad \text{ if $i = 0$},\\
w^{(2)}_{i+1,i}t+\dfrac{i}{2}\Phi(X_{i,0}^{-})\\
\quad-\displaystyle\sum_{s \geq 0}\limits\displaystyle\sum_{u=1}^i\limits w^{(1)}_{i+1,u}t^{-s} w^{(1)}_{u,i}t^s-\displaystyle\sum_{s \geq 0}\limits\displaystyle\sum_{u=i+1}^{n}\limits w^{(1)}_{i+1,u}t^{-s-1} w^{(1)}_{u,i}t^{s+1} \\
\qquad\qquad\qquad\qquad\qquad\qquad\qquad\qquad\qquad\qquad\qquad\qquad\qquad\qquad\text{ if $i \neq 0$}.
\end{cases}
\end{align*}
\end{Theorem}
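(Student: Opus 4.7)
The plan is to verify that the proposed assignments satisfy all defining relations \eqref{Eq2.1}--\eqref{Eq2.10} of $Y_{\hbar,\ve}(\widehat{\mathfrak{sl}}(n))$, exploiting the fact that $\Phi$ is built on the same template as the evaluation homomorphism $\ev_{\hbar,\ve}$ of Theorem~\ref{thm:main}. Indeed, after the formal substitution $E_{i,j}t^s \leftrightarrow w^{(1)}_{i,j}t^s$ and $E_{i,j}t \leftrightarrow w^{(2)}_{i,j}t$ in the formulas of Theorem~\ref{thm:main}, the resulting expressions differ from the images of $\Phi$ only by explicit correction terms such as $\alpha_1 w^{(1)}_{n,n}$, $\sum_{w\leq m-n}W^{(1)}_{w,w}$, and the scalar multiples of $\Phi(X^{\pm}_{0,0})$. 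The reason for this structure is visible in Corollary~\ref{COR}: the elements $w^{(1)}_{i,j}t^s$ with $i,j>m-n$ satisfy the commutation relations of $E_{i,j}t^s\in\widehat{\mathfrak{gl}}(n)$ with level $c=\alpha_2$, plus an extra diagonal central term $2\delta_{s+u,0}\delta_{p,q}\delta_{i,j}s$ tracking the $\mathfrak{gl}$ (as opposed to $\mathfrak{sl}$) trace direction. Each defining relation will therefore be checked by reducing it, via this dictionary, to the corresponding identity already established inside $U(\widehat{\mathfrak{gl}}(n))_{\mathrm{comp}}$, together with a bookkeeping argument for the extra scalar contributions.

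I would first dispose of the purely level-zero relations \eqref{Eq2.1}, \eqref{Eq2.2}, \eqref{Eq2.4} and the Serre relations \eqref{Eq2.10}: these involve only $\Phi(H_{i,0})$ and $\Phi(X^{\pm}_{i,0})$, which are single $w^{(1)}_{a,b}$-elements (possibly shifted by a scalar), so they follow immediately from Theorem~\ref{Tho1}, modulo the constant $\alpha_1+\alpha_2$ inserted into $\Phi(H_{0,0})$ to correct the central-charge mismatch. The mixed relations \eqref{Eq2.3} and \eqref{Eq2.5}--\eqref{Eq2.7} come next: since $\Phi(H_{i,1})$ and $\Phi(X^{\pm}_{i,1})$ are combinations of $w^{(2)}$-terms and normally-ordered quadratic $w^{(1)}$-terms, the brackets to compute are exactly $(W^{(1)}_{p,q})_{(0)}W^{(2)}_{i,j}$ and $(W^{(1)}_{p,q})_{(1)}W^{(2)}_{i,j}$, which are packaged in Corollary~\ref{COR}. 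The seemingly ad hoc shift $\ve+\tfrac{n}{2}\hbar$ appearing in \eqref{Eq2.6}--\eqref{Eq2.7} is then produced by the correction $-(\alpha_1+\alpha_2)\Phi(X^{\pm}_{0,0})$ built into $\Phi(X^{\pm}_{0,1})$ combined with the $\tfrac{i}{2}\Phi(H_{i,0})$-type shifts and the specialisation $\hbar=-1$.

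The principal obstacle is relation \eqref{Eq2.1} at level $(1,1)$, i.e.\ $[\Phi(H_{i,1}),\Phi(H_{j,1})]=0$, together with the cubic relations \eqref{Eq2.8}--\eqref{Eq2.9}. These require expanding brackets of the quartic expressions defining $\Phi(H_{i,1})$ and $\Phi(X^{\pm}_{i,1})$, and the honest nontrivial input is equation~\eqref{OPE5}, in which the bracket $[W^{(2)}_{p,p}t,W^{(2)}_{i,i}t]$ has already been reorganised into the pieces $a_{p,i}$, $b_{p,i}$, $\tilde{e}_{p,i}$, $g_{p,i}$, $h_{p,i}$, $k_{p,i}$ with the vanishing pieces $c_{p,i}, d_{p,i}, f_{p,i}$ discarded. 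Substituting these into $[\Phi(H_{i,1}),\Phi(H_{j,1})]$ and invoking the evaluation-map identity $[\ev_{\hbar,\ve}(H_{i,1}),\ev_{\hbar,\ve}(H_{j,1})]=0$ of Theorem~\ref{thm:main}, the quartic cross-terms cancel and one is left with a finite collection of scalar corrections; matching these forces precisely the assignment $\hbar=-1$, $\ve=n+\alpha_2$, thereby fixing the two Yangian parameters from the structure constants of the $W$-algebra.

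Finally, the Serre relations \eqref{Eq2.10} for the images $\Phi(X^{\pm}_{i,0})$ and the cubic relation \eqref{Eq2.9} at the exceptional pair $(0,n-1)$ reduce to Serre/cubic identities among the $w^{(1)}_{a,b}t^{\epsilon}$, which are inherited directly from $\widehat{\mathfrak{sl}}(n)$ because the extra central correction of Corollary~\ref{COR} is irrelevant inside triple brackets. I anticipate the main technical difficulty to be the sheer bookkeeping of the infinite sums $\sum_{s\geq 0}\sum_u w^{(1)}_{*,u}t^{-s}w^{(1)}_{u,*}t^{s}$ when brackets are expanded, specifically ensuring that the symmetrised-versus-ordered conventions in $\mathcal{U}(\mathcal{W}^k(\mathfrak{gl}(m+n),f))$ coming from Definition~\ref{Defi} and relation~\eqref{241} are tracked correctly so that the finite scalar residues recombine into exactly the constants predicted by \eqref{Eq2.6}--\eqref{Eq2.9} under $\ve=n+\alpha_2=n+k+m$.
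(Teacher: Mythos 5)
Your overall strategy --- reduce each defining relation to the corresponding identity for the evaluation map of Theorem~\ref{thm:main} via the dictionary $E_{i,j}t^s\leftrightarrow w^{(1)}_{i,j}t^s$, and feed in the OPE data of Corollary~\ref{COR} and \eqref{OPE5} for the brackets involving the $w^{(2)}$'s --- is the same as the paper's, which formalises the dictionary as the auxiliary (non-homomorphic) map $\widetilde{\ev}$ and Lemma~\ref{Claim1211}. However, there is a genuine gap at the step you yourself identify as the principal obstacle, $[\Phi(H_{i,1}),\Phi(H_{j,1})]=0$. You claim that invoking $[\ev_{\hbar,\ve}(H_{i,1}),\ev_{\hbar,\ve}(H_{j,1})]=0$ makes the quartic cross-terms cancel, leaving only a finite collection of scalar corrections. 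This transfer is not legitimate: by Theorem~\ref{Tho1} the diagonal currents carry the central term $2\delta_{s+u,0}\delta_{p,q}\delta_{i,j}s$, whereas in $\widehat{\mathfrak{gl}}(n)$ (with $z=1$) the corresponding coefficient is $1$, so the inner products on the Cartan parts differ and the quadratic parts of $\Phi(H_{i,1})$ do \emph{not} commute. The paper computes the residue explicitly in \eqref{equat2}; it is the infinite, operator-valued sum $\sum_{s\geq0}s(w^{(1)}_{i,i}t^{-s}w^{(1)}_{j,j}t^{s}-w^{(1)}_{j,j}t^{-s}w^{(1)}_{i,i}t^{s})$, not a scalar, and its cancellation requires the specific non-scalar piece $b_{p,i}$ of $[w^{(2)}_{p,p}t,w^{(2)}_{i,i}t]$ (relation \eqref{DD1}); likewise $k_{p,i}$ and $a_{p,i}$ must be matched against the operator-valued sums $B_{i,j}$ and $C_{i,j}$ coming from $[X_i,w^{(2)}_{j,j}t]$. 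The final stage is therefore not bookkeeping of constants but a term-by-term cancellation of infinite normally-ordered sums, which is the actual content of Lemma~\ref{Claim1236}.

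A second, related slip: the $w^{(1)}_{i,j}t^s$ with $i,j>m-n$ do not realise $\widehat{\mathfrak{gl}}(n)$ at level $c=\alpha_2$; Theorem~\ref{Tho1} (not Corollary~\ref{COR}, which concerns $[W^{(1)},W^{(2)}]$) gives level $\alpha_1+\alpha_2$, which is why the paper's $\widetilde{\ev}$ satisfies the relations of the Yangian with the \emph{intermediate} parameter $\tilde{\ve}=n+\alpha_1+\alpha_2$ rather than the stated $\ve=n+\alpha_2$. The downward shift by $\alpha_1$ is produced by the extra correction terms $\alpha_1 w^{(1)}_{n,n}$ in $\Phi(H_{0,1})$ and $\alpha_1 W^{(1)}_{1,n}t^{-1}$ in $\Phi(X^-_{0,1})$ through brackets such as \eqref{eqA11} in the chain \eqref{eqA9}--\eqref{eqA12}; if you start from $c=\alpha_2$ this shift has nowhere to come from, and the coefficients $\ve+\frac{n}{2}\hbar$ in \eqref{Eq2.6}--\eqref{Eq2.7} will not come out right.
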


It is enough to show that $\Phi$ is compatible with the defining relations \eqref{Eq2.1}-\eqref{Eq2.10}. By Theorem~\ref{Tho1}, we find that $\Phi$ is compatible with \eqref{Eq2.2} and \eqref{Eq2.10}. 
Thus, it is enough to show that $\Phi$ is compatible with \eqref{Eq2.1} and \eqref{Eq2.3}-\eqref{Eq2.9}. We divide the proof into two pieces, that is, Lemma~\ref{Claim1234} and Lemma~\ref{Claim1236} below. In Lemma~\ref{Claim1234}, we show that $\Phi$ is compatible with \eqref{Eq2.3}-\eqref{Eq2.9}. In Lemma~\ref{Claim1236}, we prove that $\Phi$ is compatible with \eqref{Eq2.1}.

In order to prove Lemmas~\ref{Claim1234} and \ref{Claim1236}, we relate $\Phi$ with the evaluation map for the Guay's affine Yangian. We set $\widetilde{\ev}(H_{i,s})$ and $\widetilde{\ev}(X^\pm_{i,s})$ $(s=0,1)$ as
\begin{align*}
\widetilde{\ev}(H_{i,0})=\Phi(H_{i,0}),\quad\widetilde{\ev}(X^\pm_{i,0})=\Phi(X^\pm_{i,0}),
\end{align*}
\begin{align*}
\widetilde{\ev}(H_{i,1})&=\begin{cases}
\Phi(H_{0,1})-\Big(w^{(2)}_{n,n}t-w^{(2)}_{1,1}t+\alpha_1 w^{(1)}_{n,n}+\sum_{x\leq m-n}\limits W^{(1)}_{x,x}\Big)\text{ if }i=0,\\
\Phi(H_{i,1})-(w^{(2)}_{i,i}t-w^{(2)}_{i+1,i+1}t)\text{ if }i\neq0,
\end{cases}
\end{align*}
\begin{align*}
\widetilde{\ev}(X^+_{i,1})&=\begin{cases}
\Phi(X^+_{i,1})-(w^{(2)}_{n,1}t^2+\alpha_1 w^{(1)}_{n,1}t)&\text{ if }i=0,\\
\Phi(X^+_{i,1})-w^{(2)}_{i,i+1}t&\text{ if }i\neq0,
\end{cases}
\end{align*}
\begin{align*}
\widetilde{\ev}(X^-_{i,1})&=\begin{cases}
\Phi(X^-_{i,1})-w^{(2)}_{1,n}&\text{ if }i=0,\\
\Phi(X^-_{i,1})-w^{(2)}_{i+1,i}t&\text{ if }i\neq 0.
\end{cases}
\end{align*}

We note that, in the case when we set $c$ as $\alpha_1+\alpha_2$, $\widehat{\mathfrak{gl}}(n)$ in Theorem~\ref{thm:main} is the same as the affinization of $\mathfrak{gl}(m+n)^f$ except of the inner product on the diagonal part. By Theorem~\ref{Tho1}, we can prove that $\widetilde{\ev}$ is compatible with \eqref{Eq2.2}-\eqref{Eq2.9} which are parts of the defining relations of the Guay's affine Yangian $Y_{-1,n+\alpha_1+\alpha_2}(\widehat{\mathfrak{sl}}(n))$ in a way similar to the proof of the existence of the evaluation map (see \cite{K1}). This is summarized as the following lemma.
\begin{Lemma}\label{Claim1211}
Let us set $\tilde{\ve}$ as $n+\alpha_1+\alpha_2$.
Then, $\widetilde{\ev}$ is compatible with \eqref{Eq2.2}-\eqref{Eq2.9} which are parts of the defining relations of the Guay's affine Yangian $Y_{\hbar,\tilde{\ve}}(\widehat{\mathfrak{sl}}(n))$.
\end{Lemma}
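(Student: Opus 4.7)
The plan is to transport the proof of the existence of the evaluation map $\ev_{\hbar,\tilde{\ve}}$ given in \cite{K1} into our setting, exploiting the fact that the formulas defining $\widetilde{\ev}$ on the degree-$1$ generators are obtained from those of $\ev_{\hbar,\tilde{\ve}}$ under the formal substitution $E_{i,j}t^s\mapsto w^{(1)}_{i,j}t^s$ and $c\mapsto\alpha_1+\alpha_2$, specialised at $\hbar=-1$. The choice $\tilde{\ve}=n+\alpha_1+\alpha_2$ is exactly what is needed for $c=(-n\hbar-\tilde{\ve})/\hbar$ in Theorem~\ref{thm:main} to equal $\alpha_1+\alpha_2$, matching the central value appearing in Theorem~\ref{Tho1}.

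First I would verify the above correspondence formula by formula, comparing $\widetilde{\ev}(H_{0,1})$, $\widetilde{\ev}(H_{i,1})$ for $i\neq 0$, and $\widetilde{\ev}(X^{\pm}_{i,1})$ with their counterparts in Theorem~\ref{thm:main}. The point of subtracting the $w^{(2)}$-pieces (and the extra corrections like $\alpha_1 w^{(1)}_{n,n}$, $\sum_{w\leq m-n}W^{(1)}_{w,w}$, $\alpha_1 w^{(1)}_{1,n}t^{-1}$) from $\Phi$ in the definition of $\widetilde{\ev}$ is precisely to reproduce the image of the Guay evaluation map, reinterpreted in $\mathcal{U}(\mathcal{W}^{k}(\mathfrak{gl}(m+n),f))$ via the substitution above. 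By Theorem~\ref{Tho1}, the subalgebra generated by $\{w^{(1)}_{i,j}t^s\}$ for $i,j>m-n$ has commutation relations identical to those of $\widehat{\mathfrak{gl}}(n)$ with central charge $c=\alpha_1+\alpha_2$, \emph{except} that the second central element (the trace direction, corresponding to $z$) acquires coefficient $2$ rather than $1$.

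Second, I would run through \eqref{Eq2.2}-\eqref{Eq2.9} one by one, repeating the same bracket computations as in \cite{K1}, and carefully track where the $z$-discrepancy could enter. Such a discrepancy only arises in commutators of the form $[w^{(1)}_{p,p}t^s,w^{(1)}_{i,i}t^{-s}]$, which occur inside the double sums appearing in $\widetilde{\ev}(H_{i,1})$ and $\widetilde{\ev}(X^{\pm}_{i,1})$. For the relations \eqref{Eq2.3} and \eqref{Eq2.5}-\eqref{Eq2.7}, the extra scalar contributions are attached to Kronecker factors of the form $\delta_{n,1}$ (or $\delta_{i,j}$ with $i\in\{1,n\}$, $j\in\{1,n\}$ distinct), so they vanish thanks to $n\geq 3$. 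For \eqref{Eq2.8}-\eqref{Eq2.9}, the two brackets on the left-hand side each produce a $z$-contribution, and these are expected to cancel against one another by the symmetry of the $(i,j)\leftrightarrow(j,i)$ structure, exactly as the analogous extra terms cancel in the proof in \cite{K1}.

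The main technical obstacle is the bookkeeping in \eqref{Eq2.8}-\eqref{Eq2.9}: expanding the quadratic expressions in the $w^{(1)}$'s, reordering them modulo the relations of Theorem~\ref{Tho1}, and checking that the correction terms from the $z$-discrepancy assemble into a cancelling pair. This is delicate because one works in the degreewise completion $\mathcal{U}(\mathcal{W}^{k}(\mathfrak{gl}(m+n),f))$, so the infinite sums $\sum_{s\geq 0}w^{(1)}_{\ast,u}t^{-s}w^{(1)}_{u,\ast}t^s$ must be manipulated with the completion structure of Definition~\ref{Defi} in mind. Once the cancellations are established, the verification of \eqref{Eq2.2}-\eqref{Eq2.9} proceeds exactly as in the proof of Theorem~3.8 of \cite{K1}, yielding the lemma.
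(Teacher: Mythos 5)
Your proposal is essentially the paper's own argument: the paper disposes of this lemma by exactly the observation you make, namely that with $\hbar=-1$ and $\tilde{\ve}=n+\alpha_1+\alpha_2$ (so that $c=(-n\hbar-\tilde{\ve})/\hbar=\alpha_1+\alpha_2$) Theorem~\ref{Tho1} identifies the subalgebra generated by the $w^{(1)}_{i,j}t^s$ with the image of $\widehat{\mathfrak{gl}}(n)$ except for the coefficient of the central term attached to the diagonal entries, after which the verification of \eqref{Eq2.2}--\eqref{Eq2.9} is transported from the proof of the evaluation map in \cite{K1}, the diagonal discrepancy only affecting \eqref{Eq2.1}, which is excluded from the lemma. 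One caution for your first step: the $i=0$ images $\widetilde{\ev}(X^{\pm}_{0,1})$ and $\widetilde{\ev}(H_{0,1})$ are not literally the formal-substitution images of $\ev_{\hbar,\tilde{\ve}}$ --- they carry extra terms proportional to $\alpha_1 x^{\pm}_{0}$ and an extra constant $\alpha_1(\alpha_1+\alpha_2)$ --- so the ``formula-by-formula'' match must be replaced by tracking these shifts through the affine-node relations \eqref{Eq2.3}, \eqref{Eq2.6}, \eqref{Eq2.7} and \eqref{Eq2.9} rather than assumed outright.
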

We remark that $\widetilde{\ev}$ is not an algebra homomorphism since $[\widetilde{\ev}(H_{i,1}),\widetilde{\ev}(H_{j,1})]$ is not equal to zero. See \eqref{equat2} below for the details.
\begin{Lemma}\label{Claim1234}
For all $i,j\in \{0,1,\cdots,n-1\}$, $\Phi$ is compatible with \eqref{Eq2.3}-\eqref{Eq2.9}.
\end{Lemma}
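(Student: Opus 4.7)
The strategy is to exploit the decomposition $\Phi = \widetilde{\ev} + C$, where the ``correction'' $C$ collects precisely the $w^{(2)} t$-type summands (and the few explicit linear $w^{(1)}$-corrections) that distinguish the right-hand side of $\Phi$ from that of $\widetilde{\ev}$. By construction $C$ annihilates the Chevalley generators $H_{i,0}, X^\pm_{i,0}$, so for each defining relation in \eqref{Eq2.3}--\eqref{Eq2.9} I expand bilinearly,
\begin{equation*}
[\Phi(A), \Phi(B)] = [\widetilde{\ev}(A), \widetilde{\ev}(B)] + [C_A, \widetilde{\ev}(B)] + [\widetilde{\ev}(A), C_B] + [C_A, C_B],
\end{equation*}
and treat the four summands separately.

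By Lemma~\ref{Claim1211}, the first summand already equals the required right-hand side, but as computed in the Guay's affine Yangian with the shifted parameter $\tilde{\ve} = n + \alpha_1 + \alpha_2$ in place of $\ve = n + \alpha_2$. The remaining three summands must therefore total the discrepancy converting the $\tilde{\ve}$-relation into a $\ve$-relation and, in the case of \eqref{Eq2.3}, supply the missing $w^{(2)} t$-content on the right-hand side. The cross terms $[C_A, \widetilde{\ev}(B)]$ and $[\widetilde{\ev}(A), C_B]$ are direct applications of Corollary~\ref{COR}: the $C$-parts are $w^{(2)} t$-style while the $\widetilde{\ev}$-parts acting on $X^\pm_{j,0}$ or $H_{j,0}$ are $w^{(1)}$-style, so for $s + u \in \{0, 1\}$ the corollary returns the exact answer. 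The $\delta_{p,j}\, s\, \alpha_1\, W^{(1)}_{i,q} t^{s+u-1}$-type summands there are precisely what generates the $-\alpha_1$ shift converting $\tilde{\ve}$ into $\ve$ in relations \eqref{Eq2.6}, \eqref{Eq2.7} and \eqref{Eq2.9}.

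Relation \eqref{Eq2.3} is the cleanest case: $[C_A, \widetilde{\ev}(X^\mp_{j,0})] + [\widetilde{\ev}(X^\pm_{i,0}), C_B]$ is a $w^{(2)} t$-valued commutator which by Corollary~\ref{COR} matches, term for term, the $w^{(2)} t$-component of $\Phi(H_{i,1})$, while $[C_A, C_B]$ vanishes because only one of the two inputs carries a nontrivial $C$. Relations \eqref{Eq2.4}--\eqref{Eq2.7} are similar, requiring only one cross commutator since $C_B = 0$ throughout. The main obstacle will be \eqref{Eq2.8} and \eqref{Eq2.9}, where both inputs carry nontrivial $C$'s: besides the two cross commutators one must check that $[C_A, C_B]$ produces no surviving $w^{(2)} t^{s+u}$-contribution and that its $w^{(1)} w^{(1)}$-content assembles symmetrically into the $\pm \frac{\hbar}{2} \{X^\pm_{i,0}, X^\pm_{j,0}\}$ piece on the right-hand side. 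Separating the affine node from the finite nodes and matching the $\frac{s(s-1)}{2}$-coefficients in Corollary~\ref{COR} against the $-(\ve + \frac{n}{2}\hbar)$-shift in \eqref{Eq2.9} is essentially a bookkeeping task, but its size, together with the need to single out the $(0, n-1)$-pair, makes it the technically heaviest part of the argument.
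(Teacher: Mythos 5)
Your proposal is correct and follows essentially the same route as the paper: decompose $\Phi=\widetilde{\ev}+C$, invoke Lemma~\ref{Claim1211} to get the relation at the shifted parameter $\tilde{\ve}=n+\alpha_1+\alpha_2$, and use Corollary~\ref{COR} and Theorem~\ref{Tho1} on the cross terms, whose $\alpha_1$-proportional contributions supply both the missing $w^{(2)}t$-content and the shift from $\tilde{\ve}$ back to $\ve=n+\alpha_2$ (the paper carries this out explicitly only for \eqref{Eq2.6}). One small correction: in \eqref{Eq2.8} and \eqref{Eq2.9} each individual commutator pairs a level-$1$ generator with a level-$0$ one, so $[C_A,C_B]=0$ identically in every commutator appearing in \eqref{Eq2.3}--\eqref{Eq2.9} and the extra check you anticipate there is vacuous.
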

\begin{proof}
We only prove the compatibility with \eqref{Eq2.6}. Compatibilities with other defining relations are proven in a similar way.
It is enough to show that
\begin{align}
[\Phi(\tilde{H}_{0,1}),\Phi(X^+_{n-1,0})]
&=-\{\Phi(X^+_{n-1,1})-\left(n+\alpha_2-\dfrac{n}{2}\right)w^{(1)}_{n-1,n}\},\label{eqA8}\\
[\Phi(\tilde{H}_{0,1}),\Phi(X^-_{n-1,0})]
&=\{\Phi(X^-_{n-1,1})-\left(n+\alpha_2-\dfrac{n}{2}\right)w^{(1)}_{n,n-1}\}.\label{eqA13}
\end{align}
By the definition of $\Phi$, we can rewrite the left hand side of \eqref{eqA8} as
\begin{align}
&\quad[\Phi(\tilde{H}_{0,1}),\Phi(X^+_{n-1,0})]\nonumber\\
&=-[w^{(1)}_{n-1,n},w^{(2)}_{n,n}t]+[w^{(1)}_{n-1,n},w^{(2)}_{1,1}t]\nonumber\\
&\quad-[w^{(1)}_{n-1,n},\alpha_1 w^{(1)}_{n,n}]-\sum_{x\leq m-n}\limits[w^{(1)}_{n-1,n},\alpha_1 w^{(1)}_{x,x}]+[\widetilde{\ev}(\tilde{H}_{0,1}),\widetilde{\ev}(X^+_{n-1,0})].\label{eqA9}
\end{align}
By Corollary~\ref{COR}, we obtain
\begin{align}
-[w^{(1)}_{n-1,n},w^{(2)}_{n,n}t]&=-w^{(2)}_{n-1,n}t,\label{eqA10}\\
[w^{(1)}_{n-1,n},w^{(2)}_{1,1}t]&=0.
\end{align}
By Theorem~\ref{Tho1}, we have
\begin{gather}
-[w^{(1)}_{n-1,n},\alpha_1 w^{(1)}_{n,n}]=-\alpha_1 w^{(1)}_{n-1,n},\label{eqA11}\\
-\sum_{x\leq m-n}\limits[w^{(1)}_{n-1,n},\alpha_1 W^{(1)}_{x,x}]=0.
\end{gather}
By Lemma~\ref{Claim1211}, we also obtain
\begin{align}
&\quad[\widetilde{\ev}(\tilde{H}_{0,1}),\widetilde{\ev}(X^+_{n-1,0})]\nonumber\\
&=-\widetilde{\ev}(X^+_{n-1,1})+\left(n+\alpha_1+\alpha_2-\dfrac{n}{2}\right)w^{(1)}_{n-1,n}.\label{eqA12}
\end{align}
The identity \eqref{eqA8} follows by applying \eqref{eqA10}-\eqref{eqA12} to \eqref{eqA9}.

Similarly, by the definition of $\Phi$, we obtain
\begin{align}
&\quad[\Phi(\tilde{H}_{0,1}),w^{(1)}_{n,n-1}]\nonumber\\
&=-[w^{(1)}_{n,n-1},w^{(2)}_{n,n}t]+[w^{(1)}_{n,n-1},w^{(2)}_{1,1}t]\nonumber\\
&\quad-[w^{(1)}_{n,n-1},\alpha_1 w^{(1)}_{n,n}]-\sum_{x\leq m-n}\limits[w^{(1)}_{n,n-1},\alpha_1 W^{(1)}_{x,x}]+[\widetilde{\ev}(\tilde{H}_{0,1}),\widetilde{\ev}(X^-_{n-1,0})].\label{eqA14}
\end{align}
By Corollary~\ref{COR}, we obtain
\begin{align}
-[w^{(1)}_{n,n-1},w^{(2)}_{n,n}t]&=w^{(2)}_{n,n-1}t,\label{eqA15}\\
[w^{(1)}_{n,n-1},w^{(2)}_{1,1}t]&=0.\label{eqA16}
\end{align}
By Lemma~\ref{Lem1}, we have
\begin{gather}
-[w^{(1)}_{n,n-1},\alpha_1 w^{(1)}_{n,n}]=\alpha_1 w^{(1)}_{n,n-1},\label{eqA17}\\
-\sum_{x\leq m-n}\limits[w^{(1)}_{n,n-1},\alpha_1 W^{(1)}_{x,x}]=0.
\end{gather}
By Lemma~\ref{Claim1211}, we obtain
\begin{align}
&\quad[\widetilde{\ev}(\tilde{H}_{0,1}),\widetilde{\ev}(X^-_{n-1,0})]\nonumber\\
&=\widetilde{\ev}(X^-_{n-1,1})-\left(n+\alpha_1+\alpha_2-\dfrac{n}{2}\right)w^{(1)}_{n,n-1}.\label{eqA18}
\end{align}
The identity \eqref{eqA13} follows by applying \eqref{eqA15}-\eqref{eqA18} to \eqref{eqA14}.
Thus, we have shown that $\Phi$ is compatible with \eqref{Eq2.6}. 
\end{proof}
Next, we show the compatibility with \eqref{Eq2.1}.
\begin{Lemma}\label{Claim1236}
The following equation holds for all $p,q\in \{0,1,\cdots,n-1\},\ r,s\in\{0,1\}$;
\begin{equation*}
[\Phi(H_{p,r}),\Phi(H_{q,s})]=0.
\end{equation*}
\end{Lemma}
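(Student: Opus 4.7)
The plan is to split according to the values of $(r,s)\in\{0,1\}^2$. The case $(r,s)=(0,0)$ is immediate: $\Phi(H_{i,0})$ is a linear combination of the diagonal generators $w^{(1)}_{p,p}$ plus a scalar, and Theorem~\ref{Tho1} gives $(W^{(1)}_{p,p})_{(0)}W^{(1)}_{i,i}=0$ while the higher $(s)$-products contribute only constants, so the induced brackets in $\mathcal{U}(\mathcal{W}^{k}(\mathfrak{gl}(m+n),f))$ vanish.

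For $(r,s)=(0,1)$, I would exploit the relations already verified in Lemma~\ref{Claim1234}. Writing $\Phi(H_{j,1}) = [\Phi(X^+_{j,1}), \Phi(X^-_{j,0})]$ via \eqref{Eq2.3} and then using \eqref{Eq2.4}, the Jacobi identity gives
\begin{equation*}
[\Phi(H_{i,0}),\Phi(H_{j,1})] = a_{ij}[\Phi(X^+_{j,1}), \Phi(X^-_{j,0})] - a_{ij}[\Phi(X^+_{j,1}), \Phi(X^-_{j,0})] = 0,
\end{equation*}
and the symmetric case $(r,s)=(1,0)$ follows by antisymmetry.

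The main obstacle is the case $(r,s)=(1,1)$. I would use the decomposition $\Phi(H_{i,1}) = \widetilde{\ev}(H_{i,1}) + C_i$ suggested by the very definition of $\widetilde{\ev}$, where $C_i$ contains the degree-2 pieces $w^{(2)}_{p,p}t$ (and for $i=0$ also the extra correction $\alpha_1 w^{(1)}_{n,n} + \sum_{w\leq m-n} W^{(1)}_{w,w}$). Expanding,
\begin{equation*}
[\Phi(H_{i,1}), \Phi(H_{j,1})] = [\widetilde{\ev}(H_{i,1}), \widetilde{\ev}(H_{j,1})] + [\widetilde{\ev}(H_{i,1}), C_j] + [C_i, \widetilde{\ev}(H_{j,1})] + [C_i, C_j].
\end{equation*}
The first commutator, though nonzero (as remarked after Lemma~\ref{Claim1211}), can be computed explicitly by replaying the proof of compatibility of the honest evaluation map $\ev_{\hbar,\tilde{\ve}}$; the anomaly is entirely due to the excess of the diagonal pairing on the $w^{(1)}_{p,p}$'s (which by Theorem~\ref{Tho1} is $\alpha_1+\alpha_2$ together with an extra contribution of $2$) over the $\widehat{\mathfrak{gl}}(n)$-inner product used in Theorem~\ref{thm:main}. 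The two mixed brackets reduce to iterated applications of Corollary~\ref{COR}, while $[C_i, C_j]$ is assembled from the identity \eqref{OPE5}.

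The hard part will be the bookkeeping to confirm that everything cancels. By inspection of \eqref{OPE5}, the $w^{(2)}$-content of $[C_i, C_j]$ should match that of $[\widetilde{\ev}(H_{i,1}), C_j] + [C_i, \widetilde{\ev}(H_{j,1})]$ with the opposite sign, while the remaining quadratic and linear $w^{(1)}$-contributions (together with the extra $\alpha_1 w^{(1)}_{n,n}$ and $\sum_{w}W^{(1)}_{w,w}$ pieces in $C_0$) precisely absorb the anomaly of $[\widetilde{\ev}(H_{i,1}), \widetilde{\ev}(H_{j,1})]$. I would organise the calculation by first treating $i,j\in\{1,\dots,n-1\}$ with $i\neq j$ (where several pieces of \eqref{OPE5}, namely $c_{p,i}$, $d_{p,i}$, $f_{p,i}$, vanish identically), then the diagonal case $i=j\neq 0$, and finally the boundary situations $i=0$ or $j=0$, which require explicit tracking of the additional scalar and linear corrections in $C_0$.
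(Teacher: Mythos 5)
Your strategy for the decisive case $r=s=1$ coincides with the paper's: decompose $\Phi(H_{i,1})$ as $\widetilde{\ev}(H_{i,1})$ plus the $w^{(2)}_{p,p}t$ corrections, expand the bracket into four pieces, compute $[C_i,C_j]$ from \eqref{OPE5}, the mixed brackets from Corollary~\ref{COR}, and match everything against the anomaly of $[\widetilde{\ev}(H_{i,1}),\widetilde{\ev}(H_{j,1})]$ (which, as you correctly locate, comes from the diagonal pairing $(W^{(1)}_{p,p})_{(1)}W^{(1)}_{i,i}$ carrying the coefficient $2$ on $\delta_{p,q}\delta_{i,j}$ where the central element $z$ of $\widehat{\mathfrak{gl}}(n)$ contributes only $1$; the excess is $1$, not $2$). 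Your handling of the case $(r,s)=(0,1)$ by writing $\Phi(H_{j,1})=[\Phi(X^+_{j,1}),\Phi(X^-_{j,0})]$ and invoking the Jacobi identity together with \eqref{Eq2.3} and \eqref{Eq2.4} is a genuine and legitimate shortcut not taken in the paper (which checks this case directly from Theorem~\ref{Tho1}, Corollary~\ref{COR} and the definition of $\Phi$); it is sound because Lemma~\ref{Claim1234} is established independently of the present lemma.

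The gap is that for $r=s=1$ you stop at the point where the proof actually begins: you assert that the four contributions cancel but do not verify it, and this cancellation is the entire content of the lemma. Concretely, after reducing to the identity \eqref{equat}, one must expand $-[X_i,w^{(2)}_{j,j}t]$ term by term via Corollary~\ref{COR} into the four sums \eqref{aa1}--\eqref{aa4}, regroup the result together with the pieces $a_{p,i},b_{p,i},\tilde{e}_{p,i},g_{p,i},h_{p,i},k_{p,i}$ of \eqref{OPE5} into the six families $A_{i,j},\dots,F_{i,j}$, and check the six separate identities \eqref{AA1}--\eqref{FF1}. Several of these are not mere bookkeeping: the identity \eqref{BB1} requires, in the case $i=j$, rewriting $(w^{(2)}_{i,i})_{(-1)}w^{(1)}_{i,i}$ by skew-symmetry of the normally ordered product to extract the correction $\alpha_1 w^{(1)}_{i,i}+w^{(1)}_{i,i}+\sum_{w\leq m-n}W^{(1)}_{w,w}$ that cancels $g_{i,i}+h_{i,i}$, and in the case $i>j$ a further application of \eqref{OPE4} to the reordering term $[w^{(1)}_{i,j}t,w^{(2)}_{j,i}]$ to produce exactly $w^{(2)}_{i,i}t-w^{(2)}_{j,j}t+\alpha_1 w^{(1)}_{i,i}+\sum_{w\leq m-n}W^{(1)}_{w,w}$, which then cancels $\tilde{e}_{j,i}+g_{j,i}+h_{j,i}$. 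None of this is visible from your outline, and without it the claim that the $w^{(2)}$-content and the linear and quadratic $w^{(1)}$-content each cancel remains an unproved assertion rather than a consequence of the ingredients you list.
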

\begin{proof}
By Theorem~\ref{Tho1}, we obtain $[\Phi(H_{p,0}),\Phi(H_{q,0})]=0$. By Theorem~\ref{Tho1}, Corollary~\ref{COR} and the definition of $\Phi$, we obtain $[\Phi(H_{p,0}),\Phi(H_{q,1})]=0$. Thus, it is enough to show that the relation $[\Phi(H_{p,1}),\Phi(H_{q,1})]=0$ holds. 
We only show the case when $p,q\neq0$ and $p>q$. The other case is proven in a similar way. 
In order to simplify the notation, we set
\begin{align*}
X_p&=- \displaystyle\sum_{s \geq 0}  \limits\displaystyle\sum_{u=1}^{p}\limits w^{(1)}_{p,u}t^{-s} w^{(1)}_{u,p}t^s- \displaystyle\sum_{s \geq 0} \limits\displaystyle\sum_{u=p+1}^{n}\limits w^{(1)}_{p,u}t^{-s-1}w^{(1)}_{u,p}t^{s+1}.
\end{align*}
By the definition of $\widetilde{\ev}$, we obtain
\begin{align}
\widetilde{\ev}(H_{p,1})&= \dfrac{p}{2}(W_{p,p}^{(1)}-W_{p+1,p+1}^{(1)}) +W_{p,p}^{(1)}W_{p+1,p+1}^{(1)} + X_{p} - X_{p+1} -(W_{p+1,p+1}^{(1)})^{2}\nonumber\\
&=X_p-X_{p+1}+(\text{the term generated by }\{w^{(1)}_{u,u}t^0|1\leq u\leq n\}).\label{gath0}
\end{align}
By Theorem~\ref{Tho1}, Corollary~\ref{COR} and \eqref{gath0}, we obtain
\begin{gather}
[\widetilde{\ev}(H_{p,1}),\widetilde{\ev}(H_{q,1})]=[X_p-X_{p+1},X_q-X_{q+1}],\label{gath1}\\
[\widetilde{\ev}(H_{p,1}),(w^{(2)}_{q,q}-w^{(2)}_{q+1,q+1})t]=[X_p-X_{p+1},(w^{(2)}_{q,q}-w^{(2)}_{q+1,q+1})t].\label{gath2}
\end{gather}
We remark that $[\widetilde{\ev}(H_{p,1}),\widetilde{\ev}(H_{q,1})]$ is not equal to zero since the inner products on the diagonal parts of $\widehat{\mathfrak{gl}}(n)$ and $\widehat{\mathfrak{gl}(m+n)^f}$ are different.

By \eqref{gath1}, \eqref{gath2}, and the definition of $\Phi$, we obtain
\begin{align*}
&\quad[\Phi(H_{p,1}),\Phi(H_{q,1})]\\
&=[(w^{(2)}_{p,p}-w^{(2)}_{q+1,q+1})t,(w^{(2)}_{q,q}-w^{(2)}_{q+1,q+1})t]+[X_p-X_{p+1},(w^{(2)}_{q,q}-w^{(2)}_{q+1,q+1})t]\\
&\quad+[(w^{(2)}_{p,p}-w^{(2)}_{p+1,p+1})t,X_q-X_{q+1}]+[X_p-X_{p+1},X_q-X_{q+1}].
\end{align*}
Setting $Y(i,j)$ as
\begin{align*}
[w^{(2)}_{i,i}t,w^{(2)}_{j,j}t]+[X_i,w^{(2)}_{j,j}t]+[w^{(2)}_{i,i}t,X_j]+[X_i,X_j],
\end{align*}
we can rewrite $[\Phi(H_{p,1}),\Phi(H_{q,1})]$ as
\begin{equation*}
Y(p,q)-Y(p+1,q)-Y(p,q+1)+Y(p+1,q+1).
\end{equation*}
Thus, it is enough to show that the relation
\begin{align}
[w^{(2)}_{i,i}t,w^{(2)}_{j,j}t]+[X_i,w^{(2)}_{j,j}t]+[w^{(2)}_{i,i}t,X_j]+[X_i,X_j]
&=-\alpha_1(w^{(1)}_{i,i}-w^{(1)}_{j,j})\label{equat}
\end{align}
holds for all $i,j\in \{1,\cdots,n\}$ satisfying $i\geq j$. Let us compute each terms of the left hand side of \eqref{equat}. 

We have already computed the first term of the left hand side of \eqref{equat} in Section 4.
Let us compute the last term of \eqref{equat}. By a computation similar to the proof of the existence of the evaluation map (see Theorem~3.8 in \cite{K1} and Section 3 in \cite{K3}), it is equal to
\begin{align}
[X_i,X_j]&=\sum_{s\geq0}s(w^{(1)}_{i,i}t^{-s}w^{(1)}_{j,j}t^{s}-w^{(1)}_{j,j}t^{-s}w^{(1)}_{i,i}t^{s}).\label{equat2}
\end{align}
Next, let us compute the second term and the third term of the right hand side of \eqref{equat}. By the definition of $X_i$, we obtain
\begin{align}
&\quad-[X_i,w^{(2)}_{j,j}t]\nonumber\\
&=\displaystyle\sum_{s \geq 0}  \limits\displaystyle\sum_{u=1}^{i}\limits w^{(1)}_{i,u}t^{-s} [w^{(1)}_{u,i}t^s,w^{(2)}_{j,j}t]\nonumber\\
&\quad+\displaystyle\sum_{s \geq 0}  \limits\displaystyle\sum_{u=1}^{i}\limits[w^{(1)}_{i,u}t^{-s},w^{(2)}_{j,j}t] w^{(1)}_{u,i}t^s\nonumber\\
&\quad+\displaystyle\sum_{s \geq 0} \limits\displaystyle\sum_{u=i+1}^{n}\limits w^{(1)}_{i,u}t^{-s-1} [w^{(1)}_{u,i}t^{s+1},w^{(2)}_{j,j}t]\nonumber\\
&\quad+\displaystyle\sum_{s \geq 0} \limits\displaystyle\sum_{u=i+1}^{n}\limits [w^{(1)}_{i,u}t^{-s-1},w^{(2)}_{j,j}t] w^{(1)}_{u,i}t^{s+1}.\label{bru}
\end{align}
The first term of the right hand side of \eqref{bru} is equal to
\begin{align}
&\quad\displaystyle\sum_{s \geq 0}  \limits\displaystyle\sum_{u=1}^{i}\limits w^{(1)}_{i,u}t^{-s} [w^{(1)}_{u,i}t^s,w^{(2)}_{j,j}t]\nonumber\\
&=-\delta(j\leq i)\displaystyle\sum_{s \geq 0}  \limits w^{(1)}_{i,j}t^{-s}w^{(2)}_{j,i}t^{s+1}+\delta_{i,j}\displaystyle\sum_{s \geq 0}  \limits\sum_{u=1}^iw^{(1)}_{i,u}t^{-s}w^{(2)}_{u,i}t^{s+1}\nonumber\\
&\quad+\delta(j\leq i)\alpha_1\displaystyle\sum_{s \geq 0}\limits sw^{(1)}_{i,j}t^{-s}w^{(1)}_{j,i}t^{s}+\displaystyle\sum_{s \geq 0}\limits sw^{(1)}_{i,i}t^{-s}w^{(1)}_{j,j}t^{s}\nonumber\\
&\quad+\delta_{i,j}\displaystyle\sum_{s \geq 0}\limits\sum_{x\leq m-n}\limits sw^{(1)}_{i,i}t^{-s}w^{(1)}_{x,x}t^{s}\label{aa1}
\end{align}
since we have
\begin{align*}
&\quad[w^{(1)}_{u,i}t^s,w^{(2)}_{j,j}t]\\
&=-\delta_{u,j}w^{(2)}_{j,i}t^{s+1}+\delta_{i,j}w^{(2)}_{u,i}t^{s+1}+s\delta_{u,j}\alpha_1w^{(1)}_{j,i}t^{s}+s\delta_{u,i}w^{(1)}_{j,j}t^{s}+s\delta_{i,j}\delta_{u,j}\sum_{x\leq m-n}\limits w^{(1)}_{x,x}t^{s}
\end{align*}
by Corollary~\ref{COR}. Similarly to \eqref{aa1}, we rewrite the second, third, and 4-th terms of the right hand side of \eqref{bru}. By \eqref{OPE4}, the second term of the right hand side of \eqref{bru} is equal to
\begin{align}
&\quad\displaystyle\sum_{s \geq 0}  \limits\displaystyle\sum_{u=1}^{i}\limits [w^{(1)}_{i,u}t^{-s},w^{(2)}_{j,j}t] w^{(1)}_{u,i}t^s\nonumber\\
&=-\delta_{i,j}\displaystyle\sum_{s \geq 0}  \limits\sum_{u=1}^iw^{(2)}_{j,u}t^{-s+1}w^{(1)}_{u,i}t^{s}+\delta(j\leq i)\displaystyle\sum_{s \geq 0}\limits w^{(2)}_{i,j}t^{-s+1}w^{(1)}_{j,i}t^{s}\nonumber\\
&\quad-\delta_{i,j}\alpha_1\displaystyle\sum_{s \geq 0}  \limits\sum_{u=1}^isw^{(1)}_{j,u}t^{-s}w^{(1)}_{u,i}t^{s}-\displaystyle\sum_{s \geq 0}\limits sw^{(1)}_{j,j}t^{-s}w^{(1)}_{i,i}t^{s}\nonumber\\
&\quad-\delta_{i,j}\displaystyle\sum_{s \geq 0}\limits\sum_{x\leq m-n}\limits sw^{(1)}_{x,x}t^{-s}w^{(1)}_{i,i}t^{s}.\label{aa2}
\end{align}
By Corollary~\ref{COR}, the third term of the right hand side of \eqref{bru} is equal to
\begin{align}
&\quad\displaystyle\sum_{s \geq 0} \limits\displaystyle\sum_{u=i+1}^{n}\limits w^{(1)}_{i,u}t^{-s-1} [w^{(1)}_{u,i}t^{s+1},w^{(2)}_{j,j}t]\nonumber\\
&=-\delta(j>i)\displaystyle\sum_{s \geq 0}\limits w^{(1)}_{i,j}t^{-s-1}w^{(2)}_{j,i}t^{s+2}+\delta_{i,j}\displaystyle\sum_{s \geq 0}\limits\sum_{u=i+1}^nw^{(1)}_{i,u}t^{-s-1}w^{(2)}_{u,i}t^{s+2}\nonumber\\
&\quad+\delta(j>i)\alpha_1\displaystyle\sum_{s \geq 0}\limits(s+1)w^{(1)}_{i,j}t^{-s-1}w^{(1)}_{j,i}t^{s+1}.\label{aa3}
\end{align}
By Corollary~\ref{COR}, the 4-th term of the right hand side of \eqref{bru} is equal to
\begin{align}
&\quad\displaystyle\sum_{s \geq 0} \limits\displaystyle\sum_{u=i+1}^{n}\limits [w^{(1)}_{i,u}t^{-s-1},w^{(2)}_{j,j}t] w^{(1)}_{u,i}t^{s+1} \nonumber\\
&=\delta(j>i)\displaystyle\sum_{s \geq 0}\limits w^{(2)}_{i,j}t^{-s}w^{(1)}_{j,i}t^{s+1}-\delta_{i,j}\displaystyle\sum_{s \geq 0}\limits\sum_{u=i+1}^nw^{(2)}_{i,u}t^{-s}w^{(1)}_{u,i}t^{s+1}\nonumber\\
&\quad+\delta_{i,j}\alpha_1\displaystyle\sum_{s \geq 0}\limits\sum_{u=i+1}^n\limits(s+1)w^{(1)}_{j,u}t^{-s-1}w^{(1)}_{u,i}t^{s+1}\label{aa4}
\end{align}
We prepare some notations. We denote the $i$-th term of the right hand side of \eqref{aa1} (resp.\ \eqref{aa2}, \eqref{aa3}, \eqref{aa4}) by $\eqref{aa1}_i$ (resp.\ $\eqref{aa2}_i$, $\eqref{aa3}_i$, $\eqref{aa4}_i$). We divide $-[X_i,w^{(2)}_{j,j}t]$ into 6 pieces.
\begin{align*}
A_{i,j}&=\eqref{aa1}_2+\eqref{aa2}_1+\eqref{aa3}_2+\eqref{aa4}_2\\
&=\delta_{i,j}\displaystyle\sum_{s \geq 0}  \limits\sum_{u=1}^iw^{(1)}_{i,u}t^{-s}w^{(2)}_{u,i}t^{s+1}-\delta_{i,j}\displaystyle\sum_{s \geq 0}  \limits\sum_{u=1}^iw^{(2)}_{j,u}t^{-s+1}w^{(1)}_{u,i}t^{s},\\
&\quad+\delta_{i,j}\displaystyle\sum_{s \geq 0}\limits\sum_{u=i+1}^nw^{(1)}_{i,u}t^{-s-1}w^{(2)}_{u,i}t^{s+2}-\delta_{i,j}\displaystyle\sum_{s \geq 0}\limits\sum_{u=i+1}^nw^{(2)}_{i,u}t^{-s-1}w^{(1)}_{u,i}t^{s+1},\\
B_{i,j}&=\eqref{aa1}_1+\eqref{aa2}_2+\eqref{aa3}_1+\eqref{aa4}_1\\
&=-\delta(j\leq i)\displaystyle\sum_{s \geq 0}  \limits w^{(1)}_{i,j}t^{-s}w^{(2)}_{j,i}t^{s+1}+\delta(j\leq i)\displaystyle\sum_{s \geq 0}\limits w^{(2)}_{i,j}t^{-s+1}w^{(1)}_{j,i}t^{s},\\
&\quad-\delta(j>i)\displaystyle\sum_{s \geq 0}\limits w^{(1)}_{i,j}t^{-s-1}w^{(2)}_{j,i}t^{s+2}+\delta(j>i)\displaystyle\sum_{s \geq 0}\limits w^{(2)}_{i,j}t^{-s}w^{(1)}_{j,i}t^{s+1},\\
C_{i,j}&=\eqref{aa1}_3+\eqref{aa3}_3\\
&=\delta(j\leq i)\alpha_1\displaystyle\sum_{s \geq 0}  \limits sw^{(1)}_{i,j}t^{-s}w^{(1)}_{j,i}t^{s}+\delta(j>i)\alpha_1\displaystyle\sum_{s \geq 0}\limits(s+1)w^{(1)}_{i,j}t^{-s-1}w^{(1)}_{j,i}t^{s+1},\\
D_{i,j}&=\eqref{aa1}_4+\eqref{aa2}_4\\
&=\displaystyle\sum_{s \geq 0}\limits sw^{(1)}_{i,i}t^{-s}w^{(1)}_{j,j}t^{s}-\displaystyle\sum_{s \geq 0}\limits sw^{(1)}_{j,j}t^{-s}w^{(1)}_{i,i}t^{s},\nonumber\\
\tilde{E}_{i,j}&=\eqref{aa2}_3+\eqref{aa4}_3\\
&=-\delta_{i,j}\alpha_1\displaystyle\sum_{s \geq 0}  \limits\sum_{u=1}^isw^{(1)}_{j,u}t^{-s}w^{(1)}_{u,i}t^{s}+\delta_{i,j}\alpha_1\displaystyle\sum_{s \geq 0}\limits(s+1)w^{(1)}_{j,u}t^{-s-1}w^{(1)}_{u,i}t^{s+1},\\
F_{i,j}&=\eqref{aa1}_5+\eqref{aa2}_5\\
&=\delta_{i,j}\displaystyle\sum_{s \geq 0}\limits\sum_{x\leq m-n}\limits sw^{(1)}_{i,i}t^{-s}w^{(1)}_{x,x}t^{s}-\delta_{i,j}\displaystyle\sum_{s \geq 0}\limits\sum_{x\leq m-n}\limits sw^{(1)}_{x,x}t^{-s}w^{(1)}_{i,i}t^{s}.
\end{align*}
In order to show \eqref{equat}, it is enough to show the following relations;
\begin{gather}
A_{i,j}-A_{j,i}=0,\label{AA1}\\
k_{j,i}+\tilde{e}_{j,i}+g_{j,i}+h_{j,i}+B_{i,j}-B_{j,i}=\alpha_1(w^{(1)}_{i,i}-w^{(1)}_{j,j}),\label{BB1}\\
a_{j,i}+C_{i,j}-C_{j,i}-=0,\label{CC1}\\
b_{j,i}+D_{i,j}-D_{j,i}-\eqref{equat2}=0,\label{DD1}\\
\tilde{E}_{i,j}-\tilde{E}_{j,i}=0,\label{EE1}\\
F_{i,j}-F_{j,i}=0\label{FF1}
\end{gather}
since -\eqref{equat} is equal to the sum of \eqref{AA1}-\eqref{FF1}.
By the definition of $A_{i,j}$, $\tilde{E}_{i,j}$, and $F_{i,j}$, we obtain
\begin{equation*}
A_{i,j}-A_{j,i}=0,\ \tilde{E}_{i,j}-\tilde{E}_{j,i}=0,\ F_{i,j}-F_{j,i}=0.
\end{equation*}
Moreover, by a direct computation, we also obtain \eqref{DD1}. Thus, it is enough to show that \eqref{BB1} and \eqref{CC1} hold.

First, let us show the relation \eqref{BB1}. In the case when $i=j$, it is enough to show that $k_{j,i}+g_{i,i}+h_{i,i}=0$. Since we obtain
\begin{align*}
&\quad(w^{(2)}_{i,i})_{(-1)}w^{(1)}_{i,i}t^2\\
&=(w^{(1)}_{i,i})_{(-1)}w^{(2)}_{i,i}t^2-\partial(w^{(1)}_{i,i})_{(0)}w^{(2)}_{i,i}t^2+\dfrac{1}{2}\partial^2(w^{(1)}_{i,i})_{(1)}w^{(2)}_{i,i}t^2-\dfrac{1}{6}\partial^3(w^{(1)}_{i,i})_{(2)}w^{(2)}_{i,i}t^2\\
&=(w^{(1)}_{i,i})_{(-1)}w^{(2)}_{i,i}t^2-0+\dfrac{1}{2}\partial^2(\alpha_1w^{(1)}_{i,i}+w^{(1)}_{i,i}+\sum_{x\leq m-n}\limits w^{(1)}_{x,x})t^2-0\\
&=(w^{(1)}_{i,i})_{(-1)}w^{(2)}_{i,i}t^2+\alpha_1w^{(1)}_{i,i}+w^{(1)}_{i,i}+\sum_{x\leq m-n}\limits w^{(1)}_{x,x}
\end{align*}
by the definition of the vertex algebra, we have
\begin{align*}
k_{i,i}&=\alpha_1w^{(1)}_{i,i}+w^{(1)}_{i,i}+\sum_{x\leq m-n}\limits w^{(1)}_{x,x}
\end{align*}
by the definition of $k_{i,j}$. Thus, we obtain $k_{j,i}+g_{i,i}+h_{i,i}=0$.

Let us consider the case when $i>j$. By the assumption that $i>j$, we obtain
\begin{align*}
B_{i,j}&=-\displaystyle\sum_{s \geq 0}  \limits w^{(1)}_{i,j}t^{-s}w^{(2)}_{j,i}t^{s+1}+\displaystyle\sum_{s \geq 0}\limits w^{(2)}_{i,j}t^{-s+1}w^{(1)}_{j,i}t^{s},\\
-B_{j,i}&=\displaystyle\sum_{s \geq 0}\limits w^{(1)}_{j,i}t^{-s-1}w^{(2)}_{i,j}t^{s+2}-\displaystyle\sum_{s \geq 0}\limits w^{(1)}_{j,i}t^{-s}w^{(2)}_{i,j}t^{s+1}.
\end{align*}
By \eqref{k1}, we obtain
\begin{align}
&\quad k_{j,i}+B_{i,j}-B_{j,i}\nonumber\\
&=\sum_{s\geq0}\limits w^{(2)}_{j,i}t^{-1-s}w^{(1)}_{i,j}t^{2+s}+\sum_{s\geq0}\limits w^{(1)}_{i,j}t^{1-s}w^{(2)}_{j,i}t^{s}\nonumber\\
&\quad-\sum_{s\geq0}\limits w^{(1)}_{j,i}t^{-1-s}w^{(2)}_{i,j}t^{2+s}-\sum_{s\geq0}\limits w^{(2)}_{i,j}t^{1-s}w^{(1)}_{j,i}t^{s}\nonumber\\
&\quad-\displaystyle\sum_{s \geq 0}\limits w^{(1)}_{i,j}t^{-s}w^{(2)}_{j,i}t^{s+1}+\displaystyle\sum_{s \geq 0}\limits w^{(2)}_{i,j}t^{-s+1}w^{(1)}_{j,i}t^{s}\nonumber\\
&\quad+\displaystyle\sum_{s \geq 0}\limits w^{(1)}_{j,i}t^{-s-1}w^{(2)}_{i,j}t^{s+2}-\displaystyle\sum_{s \geq 0}\limits w^{(1)}_{j,i}t^{-s}w^{(2)}_{i,j}t^{s+1}.\label{BB2}
\end{align}
Since the sum of the third (resp. 4-th) and 7-th (6-th) terms of the right hand side of \eqref{BB2} vanishes, we have
\begin{align*}
&\quad k_{j,i}+B_{i,j}-B_{j,i}\nonumber\\
&=\sum_{s\geq0}\limits w^{(2)}_{j,i}t^{-1-s}w^{(1)}_{i,j}t^{2+s}+\sum_{s\geq0}\limits w^{(1)}_{i,j}t^{1-s}w^{(2)}_{j,i}t^{s}\nonumber\\
&\quad-\displaystyle\sum_{s \geq 0}  \limits w^{(1)}_{i,j}t^{-s}w^{(2)}_{j,i}t^{s+1}-\displaystyle\sum_{s \geq 0}\limits w^{(1)}_{j,i}t^{-s}w^{(2)}_{i,j}t^{s+1}.
\end{align*}
Since
\begin{align*}
&\quad\sum_{s\geq0}\limits w^{(2)}_{j,i}t^{-1-s}w^{(1)}_{i,j}t^{2+s}+\sum_{s\geq0}\limits w^{(1)}_{i,j}t^{1-s}w^{(2)}_{j,i}t^{s}\\
&=\displaystyle\sum_{s \geq 0}\limits w^{(1)}_{j,i}t^{-s}w^{(2)}_{i,j}t^{s+1}+\displaystyle\sum_{s \geq 0}  \limits w^{(1)}_{i,j}t^{-s}w^{(2)}_{j,i}t^{s+1}\\
&\quad+[w^{(1)}_{i,j}t,w^{(2)}_{j,i}]\\
&=\displaystyle\sum_{s \geq 0}\limits w^{(1)}_{j,i}t^{-s}w^{(2)}_{i,j}t^{s+1}+\displaystyle\sum_{s \geq 0}  \limits w^{(1)}_{i,j}t^{-s}w^{(2)}_{j,i}t^{s+1}\\
&\quad+w^{(2)}_{i,i}t-w^{(2)}_{j,j}t+\alpha_1w^{(1)}_{i,i}+\sum_{x\leq m-n}\limits w^{(1)}_{x,x}
\end{align*}
holds for $i\neq j$ by\eqref{OPE4}, we obtain
\begin{align*}
&\quad k_{j,i}+B_{i,j}-B_{j,i}\\
&=w^{(2)}_{i,i}t-w^{(2)}_{j,j}t+\alpha_1w^{(1)}_{i,i}+\sum_{x\leq m-n}\limits w^{(1)}_{x,x}.
\end{align*}
Thus, we have $k_{j,i}+\tilde{e}_{j,i}+g_{j,i}+h_{j,i}+B_{i,j}-B_{j,i}=\alpha_1(w^{(1)}_{i,i}-w^{(1)}_{j,j})$.

Next, let us show the relation \eqref{CC1}. In the case when $i=j$, it is clear. We consider the case that hat $i>j$.
In this case, we obtain
\begin{align*}
C_{i,j}&=\displaystyle\sum_{s \geq 0}\limits(s+1)\alpha_1w^{(1)}_{i,j}t^{-s-1}w^{(1)}_{j,i}t^{s+1},\\
-C_{j,i}&=-\displaystyle\sum_{s \geq 0}\limits(s+1)\alpha_1w^{(1)}_{j,i}t^{-s-1}w^{(1)}_{i,j}t^{s+1}.
\end{align*}
Then, by a direct computation, we obtain \eqref{CC1}.
This completes the proof of Lemma~\ref{Claim1236}.
\end{proof}
\section{A relationship with the homomorphism given by De Sole-Kac-Valeri}
In this section, we note the relationship between the homomorphism in Theorem~\ref{Main} and the homomorphism defined in \cite{DKV}. First, we recall the definition of a finite $W$-algebra. A finite $W$-algebra $\mathcal{W}^{\text{fin}}(\mathfrak{gl}(m+n),f)$ is the Zhu-algebra (\cite{Zhu}) of $\mathcal{W}^k(\mathfrak{gl}(m+n),f)$. Let us set the degree of $\mathcal{U}(\mathcal{W}^k(\mathfrak{g},f))$ by $\text{deg}(W^{(r)}_{i,j}t^s)=s-r+1$. By Theorem~A.2.11 in \cite{NT}, we find that
\begin{equation*}
\mathcal{W}^{\text{fin}}(\mathfrak{gl}(m+n),f)=\mathcal{U}_0/\sum_{r>0}\limits\mathcal{U}_{-r}\mathcal{U}_r,
\end{equation*}
where $\mathcal{W}^k(\mathfrak{g},f)_r$ is the set of degree $r$-terms of $\mathcal{U}(\mathcal{W}^k(\mathfrak{g},f))$. By the definition of $\Phi$, the image of $\Phi\circ\omega$ is contained in $\mathcal{U}_0$. Then, we can define 
\begin{equation*}
p\circ\Phi\circ\omega\colon Y_\hbar(\mathfrak{sl}(n))\to\mathcal{W}^{\text{fin}}(\mathfrak{gl}(m+n),f)
\end{equation*}
where $p$ is the natural projection from $\mathcal{U}_0$ to $\mathcal{W}^{\text{fin}}(\mathfrak{gl}(m+n),f)$.

Next, let us recall the relationship between the finite Yangian and the Guay's affine Yangian.
\begin{Definition}
Let $\mathfrak{g}$ be a finite dimensional simple Lie algebra and $(a_{i,j})_{1\leq i,j\leq N}$ be the Cartan matrix of $\mathfrak{g}$.
The finite Yangian $Y_{\hbar}(\mathfrak{g})$ is the associative algebra over $\mathbb{C}$ generated by $x_{i,r}^{+}, x_{i,r}^{-}, h_{i,r}$ $(1\leq i\leq N, r \in \mathbb{Z}_{\geq 0})$ with two parameters $\ve_1, \ve_2 \in \mathbb{C}$ subject to the following defining relations:
\begin{gather}
	[h_{i,r}, h_{j,s}] = 0, \\
	[x_{i,r}^{+}, x_{j,s}^{-}] = \delta_{ij} h_{i, r+s}, \\
	[h_{i,0}, x_{j,r}^{\pm}] = \pm a_{ij} x_{j,r}^{\pm},\\
	[h_{i, r+1}, x_{j, s}^{\pm}] - [h_{i, r}, x_{j, s+1}^{\pm}] 
	= \pm a_{ij} \dfrac{\hbar}{2} \{h_{i, r}, x_{j, s}^{\pm}\},\\
	[x_{i, r+1}^{\pm}, x_{j, s}^{\pm}] - [x_{i, r}^{\pm}, x_{j, s+1}^{\pm}] 
	= \pm a_{ij}\dfrac{\hbar}{2} \{x_{i, r}^{\pm}, x_{j, s}^{\pm}\},\\
	\sum_{w \in \mathfrak{S}_{1 + |a_{ij}|}}[x_{i,r_{w(1)}}^{\pm}, [x_{i,r_{w(2)}}^{\pm}, \dots, [x_{i,r_{w(1 + |a_{ij}|)}}^{\pm}, x_{j,s}^{\pm}]\dots]] = 0\  (i \neq j).
\end{gather}
\end{Definition}
Setting $\ve+\dfrac{n}{2}\hbar=0$, there exists a natural embedding $\omega$ from the finite Yangian associated with $\mathfrak{sl}(n)$ to the Guay's affine Yangian determined by
\begin{gather*}
\omega(h_{i,0})=H_{i,0},\qquad\omega(x^\pm_{i,0})=X^\pm_{i,0}.
\end{gather*}
Thus, we obtain the homomorphism
\begin{equation*}
p\circ\Phi\circ\omega\colon Y_\hbar(\mathfrak{sl}(n))\to\mathcal{W}^{\text{fin}}(\mathfrak{gl}(m+n),f)
\end{equation*}
since, by the definition of $\Phi$, the image of $\Phi\circ\omega$ is contained in $\mathcal{U}_0$.

In \cite{DKV}, De Sole-Kac-Valeri constructed a homomorphism from the Yangian associated with $\mathfrak{gl}(n)$ to a finite $W$-algebra $\mathcal{W}^{\text{fin}}(\mathfrak{gl}(m+n),f)$ associated with $\mathfrak{gl}(m+n)$ and a nilpotent element $f$. Let us recall the definition of the Yangian associated with $\mathfrak{gl}(n)$ (\cite{D1} and \cite{D2}).
\begin{Definition}
The Yangian $Y(\mathfrak{gl}(n))$ is an associative superalgebra whose generators are $\{t^{(r)}_{i,j}\mid r\geq0, 1\leq i,j\leq n\}$ and defining relations are
\begin{gather*}
t^{(0)}_{i,j}=\delta_{i,j},\\
[t^{(r+1)}_{i,j},t^{(s)}_{u,v}]-[t^{(r)}_{i,j},t^{(s+1)}_{u,v}]=t^{(r)}_{u,j}t^{(s)}_{i,v}-t^{(s)}_{u,j}t^{(r)}_{i,v}.
\end{gather*}
\end{Definition}
Actually, in the case when $\hbar\neq0$, the finite Yangian $Y_\hbar(\mathfrak{sl}(n))$ is a subalgebra of the Yangian associated with $Y(\mathfrak{gl}(n))$.
For $\hbar\neq0$, the embedding $\iota_\hbar\colon Y_{\hbar}(\mathfrak{sl}(m|n))\to Y(\mathfrak{gl}(m|n))$ is given by 
\begin{gather*}
\iota_\hbar(h_{i,0})=t^{(1)}_{i,i}-t^{(1)}_{i+1,i+1},\quad \iota_\hbar(x^+_{i,0})=t^{(1)}_{i,i+1},\quad \iota_\hbar(x^-_{i,0})=t^{(1)}_{i+1,i},
\end{gather*}
\begin{align*}
\iota_\hbar(h_{i,1})&=
-\hbar t^{(2)}_{i,i}+\hbar t^{(2)}_{i+1,i+1}-\dfrac{i}{2}\hbar(t^{(1)}_{i,i}-t^{(1)}_{i+1,i+1}) -\hbar t^{(1)}_{i,i}t^{(1)}_{i+1,i+1} \\
&\quad+ \hbar\displaystyle\sum_{u=1}^{i}\limits t^{(1)}_{i,u}t^{(1)}_{u,i}-\hbar\displaystyle\sum_{u=1}^{i}\limits t^{(1)}_{i+1,u}t^{(1)}_{u,i+1}.
\end{align*}

In \cite{DKV}, De Sole-Kac-Valeri constructed the following homomorphism
\begin{equation*}
\widetilde{\Phi}\colon Y(\mathfrak{gl}(n))\to\mathcal{W}^{\text{fin}}(\mathfrak{gl}(m+n),f)
\end{equation*}
determined by
\begin{equation*}
\widetilde{\Phi}(t^{(1)}_{i,j})=p(w^{(1)}_{i,j}),\ \widetilde{\Phi}(t^{(2)}_{i,j})=p(w^{(2)}_{i,j}t),\ \widetilde{\Phi}(t^{(r)}_{m-n+i,m-n+j})=0\ (r\geq3).
\end{equation*}
\begin{Remark}
Brundan-Kleshchev \cite{BK} constructed a surjective homomorphism from a shifted Yangian to a finite $W$-algebra of type $A$. In this case, there exists a surjective homomorphism
\begin{equation*}
Y(\mathfrak{gl}(m),\sigma)\to \mathcal{W}^{\text{fin}}(\mathfrak{gl}(m+n),f)
\end{equation*}
where $\sigma$ is an $m\times m$ matrix satisfying
\begin{gather*}
s_{i,j}+s_{j,k}=s_{i,k}\text{ if }|i-j|+|j-k|=|i-k|,\\
s_{i,i}=0,\ 
s_{i,i+1}=\begin{cases}
0\text{ if }i\neq n\\
1\text{ if }i=n,
\end{cases},\ 
s_{i+1,i}=0.
\end{gather*}
The finite Yangian $Y(\mathfrak{gl}(n))$ can be embedded into $Y(\mathfrak{gl}(m),\sigma)$ and the homomorphism given in De Sole-Kac-Valeri \cite{DKV} is a restriction of the homomorphism given in \cite{BK} (See Remark 6.17 in \cite{DKV}).
\end{Remark}
By a direct computation, we obtain the following theorem.
\begin{Theorem}
The homomorphism $\Phi$ induce the homomorphism $\widetilde{\Phi}$;
\begin{equation*}
\widetilde{\Phi}\circ\iota_{-1}=p\circ\Phi\circ\omega.
\end{equation*}
\end{Theorem}
\appendix
\section{the proof of \eqref{OPE3-1}}
This subsection is devoted to the proof of \eqref{OPE3-1}. It is only due to a direct computation. We set the following notation;
\begin{gather*}
e^{(1)}_{i,j}=e_{i,j}\text{ for all }1\leq i,j\leq m,\\
e^{(2)}_{i,j}=e_{n+i,n+j}\text{ for all }m-n+1\leq i\leq m.
\end{gather*}
We denote by $\nu$ the Miura transformation (see \cite{KW1}) By the definition of the Miura transformation, we have
\begin{align*}
\nu(W^{(1)}_{i,j})&=\begin{cases}
e^{(1)}_{i,j}[-1]&\text{ if }1\leq i\leq m-n,\\
e^{(1)}_{i,j}[-1]+e^{(2)}_{i,j}[-1]&\text{ if }m-n<i\leq m,
\end{cases}\\
\nu(W^{(2)}_{i,j})&=\sum_{w>m-n}\limits e_{w,j}^{(1)}[-1]e_{i,w}^{(2)}[-1]-\sum_{w\leq m-n}\limits e_{w,j}^{(1)}[-1]e_{i,w}^{(1)}[-1]-\alpha_2e^{(1)}_{i,j}[-2].
\end{align*}
By \cite{A2}, in order to compute $(W^{(2)}_{p,q})_{(0)}W^{(2)}_{i,j}$ it is enough to  compute $(\nu(W^{(2)}_{p,q}))_{(0)}\nu(W^{(2)}_{i,j})$. In order to simplify the notation, we sometimes denote $e^{(r)}_{i,j}[-1]$ by $e^{(r)}_{i,j}$.
In the appendix, we denote the $i$-th non-zero term of $(\text{equation number})$ by $(\text{equation number})_i$.
\begin{proof}[proof of \eqref{OPE3-1}]
By the definition of $W^{(2)}_{i,j}$, we have
\begin{align*}
&\quad(W^{(2)}_{p,q})_{(0)}W^{(2)}_{i,j}\\
&=(\sum_{x>m-n}\limits e_{x,q}^{(1)}e_{p,x}^{(2)})_{(0)}(\sum_{w>m-n}\limits e_{w,j}^{(1)}e_{i,w}^{(2)}-\sum_{w\leq m-n}\limits e_{w,j}^{(1)}e_{i,w}^{(1)}-\alpha_2e^{(1)}_{i,j}[-2])\\
&\quad-(\sum_{x\leq m-n}\limits e_{x,q}^{(1)}e_{p,x}^{(1)})_{(0)}(\sum_{w>m-n}\limits e_{w,j}^{(1)}e_{i,w}^{(2)}-\sum_{w\leq m-n}\limits e_{w,j}^{(1)}e_{i,w}^{(1)}-\alpha_2e^{(1)}_{i,j}[-2]).
\end{align*}
We compute these terms respectively. By a direct computation, we obtain
\begin{align}
&\quad(\sum_{x>m-n}\limits e_{x,q}^{(1)}e_{p,x}^{(2)})_{(0)}\sum_{w>m-n}\limits e_{w,j}^{(1)}e_{i,w}^{(2)}\nonumber\\
&=\sum_{x,w>m-n}\limits e_{p,x}^{(2)}[e_{x,q}^{(1)},e_{w,j}^{(1)}]e_{i,w}^{(2)}+\sum_{x,w>m-n}\limits e_{x,q}^{(1)}e_{w,j}^{(1)}[e_{p,x}^{(2)},e_{i,w}^{(2)}]\nonumber\\
&\quad+\sum_{x,w>m-n}\limits e_{p,x}^{(2)}[-2](e_{x,q}^{(1)},e_{w,j}^{(1)})e_{i,w}^{(2)}+\sum_{x,w>m-n}\limits e_{x,q}^{(1)}[-2]e_{w,j}^{(1)}(e_{p,x}^{(2)},e_{i,w}^{(2)})\nonumber\\
&=\sum_{x,w>m-n}\limits e_{p,x}^{(2)}(\delta_{q,w}e^{(1)}_{x,j}-\delta_{j,x}e^{(1)}_{w,q})e_{i,w}^{(2)}+\sum_{x,w>m-n}\limits e_{x,q}^{(1)}e_{w,j}^{(1)}(\delta_{x,i}e^{(2)}_{p,w}-\delta_{w,p}e^{(2)}_{i,x})\nonumber\\
&\quad+\sum_{x,w>m-n}\limits e_{p,x}^{(2)}[-2](\alpha_1\delta_{x,j}\delta_{q,w}+\delta_{x,q}\delta_{w,j})e_{i,w}^{(2)}+\sum_{x,w>m-n}\limits e_{x,q}^{(1)}[-2]e_{w,j}^{(1)}(\alpha_2\delta_{x,i}\delta_{w,p}+\delta_{p,x}\delta_{i,w})\nonumber\\
&=\delta(q>m-n)\sum_{x>m-n}\limits e_{p,x}^{(2)}e^{(1)}_{x,j}e_{i,q}^{(2)}-\delta(j>m-n)\sum_{w>m-n}\limits e_{p,j}^{(2)}e^{(1)}_{w,q}e_{i,w}^{(2)}\nonumber\\
&\quad+\sum_{w>m-n}\limits e_{i,q}^{(1)}e_{w,j}^{(1)}e^{(2)}_{p,w}-\sum_{x>m-n}\limits e_{x,q}^{(1)}e_{p,j}^{(1)}e^{(2)}_{i,x}\nonumber\\
&\quad+\delta(q,j>m-n)\alpha_1e_{p,j}^{(2)}[-2]e_{i,q}^{(2)}+\delta(q,j>m-n)e_{p,q}^{(2)}[-2]e_{i,j}^{(2)}\nonumber\\
&\quad+\alpha_2e_{i,q}^{(1)}[-2]e_{p,j}^{(1)}+e_{p,q}^{(1)}[-2]e_{i,j}^{(1)}.\label{1-1}
\end{align}
By a direct computation, we obtain
\begin{align}
&\quad-(\sum_{x>m-n}\limits e_{x,q}^{(1)}e_{p,x}^{(2)})_{(0)}\sum_{w\leq m-n}\limits e_{w,j}^{(1)}e_{i,w}^{(1)}\nonumber\\
&=-\sum_{\substack{x>m-n\\w\leq m-n}}\limits e_{p,x}^{(2)}[e_{x,q}^{(1)},e_{w,j}^{(1)}]e_{i,w}^{(1)}-\sum_{\substack{x>m-n\\w\leq m-n}}\limits e_{p,x}^{(2)}e_{w,j}^{(1)}[e_{x,q}^{(1)},e_{i,w}^{(1)}]\nonumber\\
&\quad-\sum_{\substack{x>m-n\\w\leq m-n}}\limits e_{p,x}^{(2)}[-2](e_{x,q}^{(1)},e_{w,j}^{(1)})e_{i,w}^{(1)}-\sum_{\substack{x>m-n\\w\leq m-n}}\limits e_{p,x}^{(2)}[-2]e_{w,j}^{(1)}(e_{x,q}^{(1)},e_{i,w}^{(1)})\nonumber\\
&\quad-\sum_{\substack{x>m-n\\w\leq m-n}}\limits e_{p,x}^{(2)}[-2][[e_{x,q}^{(1)},e_{w,j}^{(1)}],e_{i,w}^{(1)}]-\sum_{\substack{x>m-n\\w\leq m-n}}\limits e_{p,x}^{(2)}[-3]([e_{x,q}^{(1)},e_{w,j}^{(1)}],e_{i,w}^{(1)})\nonumber\\
&=-\sum_{\substack{x>m-n\\w\leq m-n}}\limits e_{p,x}^{(2)}(\delta_{q,w}e^{(1)}_{x,j}-\delta_{j,x}e^{(1)}_{w,q})e_{i,w}^{(1)}-\sum_{\substack{x>m-n\\w\leq m-n}}\limits e_{p,x}^{(2)}e_{w,j}^{(1)}(\delta_{q,i}e_{w,x}^{(1)}-\delta_{x,w}e^{(1)}_{i,q})\nonumber\\
&\quad-\sum_{\substack{x>m-n\\w\leq m-n}}\limits e_{p,x}^{(2)}[-2](\alpha_1\delta_{x,j}\delta_{q,w}+\delta_{x,q}\delta_{w,j})e_{i,w}^{(1)}-\sum_{\substack{x>m-n\\w\leq m-n}}\limits e_{p,x}^{(2)}[-2]e_{w,j}^{(1)}(\alpha_1\delta_{x,w}\delta_{q,i}+\delta_{x,q}\delta_{i,w})\nonumber\\
&\quad-\sum_{\substack{x>m-n\\w\leq m-n}}\limits e_{p,x}^{(2)}[-2][(\delta_{q,w}e^{(1)}_{x,j}-\delta_{j,x}e^{(1)}_{w,q}),e_{i,w}^{(1)}]\nonumber\\
&\quad-\sum_{\substack{x>m-n\\w\leq m-n}}\limits \delta_{q,w}e_{p,x}^{(2)}[-3](\delta_{q,w}e^{(1)}_{x,j}-\delta_{j,x}e^{(1)}_{w,q},e^{(1)}_{i,w})\nonumber\\
&=-\sum_{\substack{x>m-n\\w\leq m-n}}\limits e_{p,x}^{(2)}(\delta_{q,w}e^{(1)}_{x,j}-\delta_{j,x}e^{(1)}_{w,q})e_{i,w}^{(1)}-\sum_{\substack{x>m-n\\w\leq m-n}}\limits e_{p,x}^{(2)}e_{w,j}^{(1)}(\delta_{q,i}e_{w,x}^{(1)}-\delta_{x,w}e^{(1)}_{i,q})\nonumber\\
&\quad-\sum_{\substack{x>m-n\\w\leq m-n}}\limits e_{p,x}^{(2)}[-2](\alpha_1\delta_{x,j}\delta_{q,w}+\delta_{x,q}\delta_{w,j})e_{i,w}^{(1)}-\sum_{\substack{x>m-n\\w\leq m-n}}\limits e_{p,x}^{(2)}[-2]e_{w,j}^{(1)}(\alpha_1\delta_{x,w}\delta_{q,i}+\delta_{x,q}\delta_{i,w})\nonumber\\
&\quad-\sum_{\substack{x>m-n\\w\leq m-n}}\limits \delta_{q,w}e_{p,x}^{(2)}[-2](\delta_{j,i}e^{(1)}_{x,w}-\delta_{x,w}e^{(1)}_{i,j})+\sum_{\substack{x>m-n\\w\leq m-n}}\limits \delta_{j,x}e_{p,x}^{(2)}[-2](\delta_{q,i}e^{(1)}_{w,w}-e^{(1)}_{i,q})\nonumber\\
&\quad-\sum_{\substack{x>m-n\\w\leq m-n}}\limits \delta_{q,w}e_{p,x}^{(2)}[-3](\delta_{q,w}e^{(1)}_{x,j}-\delta_{j,x}e^{(1)}_{w,q},e^{(1)}_{i,w})\nonumber\\
&\quad+\sum_{\substack{x>m-n\\w\leq m-n}}\limits \delta_{x,j}e_{p,x}^{(2)}[-3](\delta_{i,q}\alpha_1+\delta_{w,q}\delta_{i,w})\nonumber\\
&=-\delta(q\leq m-n)\sum_{\substack{x>m-n}}\limits e_{p,x}^{(2)}e^{(1)}_{x,j}e_{i,q}^{(1)}-\delta(j>m-n)\sum_{\substack{w\leq m-n}}\limits e_{p,j}^{(2)}e^{(1)}_{w,q}e_{i,w}^{(1)}\nonumber\\
&\quad-\delta_{q,i}\sum_{\substack{x>m-n\\w\leq m-n}}\limits e_{p,x}^{(2)}e_{w,j}^{(1)}e_{w,x}^{(1)}+0\nonumber\\
&\quad-\delta(j>m-n)\delta(q\leq m-n)\alpha_1e_{p,j}^{(2)}[-2]e_{i,q}^{(1)}-\delta(q>m-n)\delta(j\leq m-n)e_{p,q}^{(2)}[-2]e_{i,j}^{(1)}\nonumber\\
&\quad-0-0-\delta_{j,i}\delta(q\leq m-n)\sum_{\substack{x>m-n}}\limits e_{p,x}^{(2)}[-2]e^{(1)}_{x,q}-0\nonumber\\
&\quad+\delta_{q,i}\delta(j>m-n)\sum_{\substack{w\leq m-n}}\limits e_{p,j}^{(2)}[-2]e^{(1)}_{w,w}-\delta(j>m-n)(m-n)e_{p,j}^{(2)}[-2]e^{(1)}_{i,q}\nonumber\\
&\quad+0+0+\delta_{q,i}\delta(j>m-n)\alpha_1(m-n)e_{p,j}^{(2)}[-3]+0.\label{1-2}
\end{align}
By a direct computation, we obtain
\begin{align}
&\quad-\alpha_2(\sum_{x>m-n}\limits e_{x,q}^{(1)}e_{p,x}^{(2)})_{(0)}e^{(1)}_{i,j}[-2]\nonumber\\
&=-\alpha_2\sum_{x>m-n}\limits e_{p,x}^{(2)}[e_{x,q}^{(1)},e^{(1)}_{i,j}][-2]-\alpha_2\sum_{x>m-n}\limits e_{p,x}^{(2)}[-2][e_{x,q}^{(1)},e^{(1)}_{i,j}]\nonumber\\
&\quad-2\alpha_2\sum_{x>m-n}\limits e_{p,x}^{(2)}[-3](e_{x,q}^{(1)},e^{(1)}_{i,j})\nonumber\\
&=-\alpha_2\sum_{x>m-n}\limits e_{p,x}^{(2)}(\delta_{q,i}e^{(1)}_{x,j}-\delta_{x,j}e^{(1)}_{i,q})[-2]-\alpha_2\sum_{x>m-n}\limits e_{p,x}^{(2)}[-2](\delta_{q,i}e^{(1)}_{x,j}-\delta_{x,j}e^{(1)}_{i,q})\nonumber\\
&\quad-2\alpha_2\sum_{x>m-n}\limits e_{p,x}^{(2)}[-3](\delta_{x,j}\delta_{q,i}\alpha_1+\delta_{x,q}\delta_{i,j})\nonumber\\
&=-\delta_{q,i}\alpha_2\sum_{x>m-n}\limits e_{p,x}^{(2)}e^{(1)}_{x,j}[-2]+\delta(j>m-n)\alpha_2e_{p,j}^{(2)}e^{(1)}_{i,q}[-2]\nonumber\\
&\quad-\delta_{q,i}\alpha_2\sum_{x>m-n}\limits e_{p,x}^{(2)}[-2]e^{(1)}_{x,j}+\delta(j>m-n)\alpha_2e_{p,j}^{(2)}[-2]e^{(1)}_{i,q}\nonumber\\
&\quad-2\delta_{q,i}\delta(j>m-n)\alpha_1\alpha_2e_{p,j}^{(2)}[-3]-2\delta_{i,j}\delta(q>m-n)\alpha_2 e_{p,q}^{(2)}[-3].\label{1-3}
\end{align}
By a direct computation, we obtain
\begin{align}
&\quad-(\sum_{x\leq m-n}\limits e_{x,q}^{(1)}e_{p,x}^{(1)})_{(0)}\sum_{w>m-n}\limits e_{w,j}^{(1)}e_{i,w}^{(2)}\nonumber\\
&=-\sum_{\substack{x\leq m-n\\w>m-n}}\limits e_{x,q}^{(1)}[e_{p,x}^{(1)},e_{w,j}^{(1)}]e_{i,w}^{(2)}-\sum_{\substack{x\leq m-n\\w>m-n}}\limits e_{p,x}^{(1)}[e_{x,q}^{(1)},e_{w,j}^{(1)}]e_{i,w}^{(2)}\nonumber\\
&\quad-\sum_{\substack{x\leq m-n\\w>m-n}}\limits e_{x,q}^{(1)}[-2](e_{p,x}^{(1)},e_{w,j}^{(1)})e_{i,w}^{(2)}-\sum_{\substack{x\leq m-n\\w>m-n}}\limits e_{p,x}^{(1)}[-2](e_{x,q}^{(1)},e_{w,j}^{(1)})e_{i,w}^{(2)}\nonumber\\
&=-\sum_{\substack{x\leq m-n\\w>m-n}}\limits e_{x,q}^{(1)}(\delta_{x,w}e^{(1)}_{p,j}-\delta_{p,j}e^{(1)}_{w,x})e_{i,w}^{(2)}-\sum_{\substack{x\leq m-n\\w>m-n}}\limits e_{p,x}^{(1)}(\delta_{q,w}e^{(1)}_{x,j}-\delta_{j,x}e^{(1)}_{w,q})e_{i,w}^{(2)}\nonumber\\
&\quad-\sum_{\substack{x\leq m-n\\w>m-n}}\limits e_{x,q}^{(1)}[-2](\alpha_1\delta_{p,j}\delta_{x.w}+\delta_{p,x}\delta_{w,j})e_{i,w}^{(2)}-\sum_{\substack{x\leq m-n\\w>m-n}}\limits e_{p,x}^{(1)}[-2](\delta_{q,w}\delta_{x,j}\alpha_1+\delta_{x,q}\delta_{w,j})e_{i,w}^{(2)}\nonumber\\
&=0+\delta_{p,j}\sum_{\substack{x\leq m-n\\w>m-n}}\limits e_{x,q}^{(1)}e^{(1)}_{w,x}e_{i,w}^{(2)}\nonumber\\
&\quad-\delta(q>m-n)\sum_{\substack{x\leq m-n}}\limits e_{p,x}^{(1)}e^{(1)}_{x,j}e_{i,q}^{(2)}+\delta(j\leq m-n)\sum_{\substack{w>m-n}}\limits e_{p,j}^{(1)}e^{(1)}_{w,q}e_{i,w}^{(2)}\nonumber\\
&\quad-0-0-\delta(q>m-n,j\leq m-n)\alpha_1e_{p,j}^{(1)}[-2]e_{i,q}^{(2)}-\delta(q\leq m-n,j>m-n)e_{p,q}^{(1)}[-2]e_{i,j}^{(2)}.\label{1-4}
\end{align}
By a direct computation, we obtain
\begin{align}
&\quad(\sum_{x\leq m-n}\limits e_{x,q}^{(1)}e_{p,x}^{(1)})_{(0)}\sum_{w\leq m-n}\limits e_{w,j}^{(1)}e_{i,w}^{(1)}\nonumber\\
&=\sum_{x,w\leq m-n}\limits e_{x,q}^{(1)}[-2][e_{p,x}^{(1)},e_{w,j}^{(1)}]e_{i,w}^{(1)}+\sum_{x,w\leq m-n}\limits e_{x,q}^{(1)}[-2]e_{w,j}^{(1)}[e_{p,x}^{(1)},e_{i,w}^{(1)}]\nonumber\\
&\quad+\sum_{x,w\leq m-n}\limits e_{x,q}^{(1)}[-2](e_{p,x}^{(1)},e_{w,j}^{(1)})e_{i,w}^{(1)}+\sum_{x,w\leq m-n}\limits e_{x,q}^{(1)}[-2]e_{w,j}^{(1)}(e_{p,x}^{(1)},e_{i,w}^{(1)})\nonumber\\
&\quad+\sum_{x,w\leq m-n}\limits e_{x,q}^{(1)}[-2][[e_{p,x}^{(1)},e_{w,j}^{(1)}],e_{i,w}^{(1)}]\nonumber\\
&\quad+\sum_{x,w\leq m-n}\limits e_{p,x}^{(1)}[e_{x,q}^{(1)},e_{w,j}^{(1)}]e_{i,w}^{(1)}+\sum_{x,w\leq m-n}\limits e_{p,x}^{(1)}e_{w,j}^{(1)}[e_{x,q}^{(1)},e_{i,w}^{(1)}]\nonumber\\
&\quad+\sum_{x,w\leq m-n}\limits e_{p,x}^{(1)}[-2](e_{x,q}^{(1)},e_{w,j}^{(1)})e_{i,w}^{(1)}+\sum_{x,w\leq m-n}\limits e_{p,x}^{(1)}[-2]e_{w,j}^{(1)}(e_{x,q}^{(1)},e_{i,w}^{(1)})\nonumber\\
&\quad+\sum_{x,w\leq m-n}\limits e_{p,x}^{(1)}[-2][[e_{x,q}^{(1)},e_{w,j}^{(1)}],e_{i,w}^{(1)}]\nonumber\\
&\quad+\sum_{\substack{x,w\leq m-n}}\limits e_{p,x}^{(1)}[-3]([e_{x,q}^{(1)},e_{w,j}^{(1)}],e_{i,w}^{(1)})+\sum_{\substack{x,w\leq m-n}}\limits e_{x,q}^{(1)}[-3]([e_{p,x}^{(1)},e_{w,j}^{(1)}],e_{i,w}^{(1)})\nonumber\\
&=\sum_{x,w\leq m-n}\limits e_{x,q}^{(1)}(\delta_{x,w}e^{(1)}_{p,j}-\delta_{p,j}e^{(1)}_{w,x})e_{i,w}^{(1)}+\sum_{x,w\leq m-n}\limits e_{x,q}^{(1)}e_{w,j}^{(1)}(\delta_{x,i}e^{(1)}_{p,w}-\delta_{p,w}e^{(1)}_{i,x})\nonumber\\
&\quad+\sum_{x,w\leq m-n}\limits e_{x,q}^{(1)}[-2](\alpha_1\delta_{p,j}\delta_{x,w}+\delta_{p,x}\delta_{w,j})e_{i,w}^{(1)}+\sum_{x,w\leq m-n}\limits e_{x,q}^{(1)}[-2]e_{w,j}^{(1)}(\delta_{x,i}\delta_{w,p}\alpha_1+\delta_{p,x}\delta_{i,w})\nonumber\\
&\quad+\sum_{x,w\leq m-n}\limits e_{x,q}^{(1)}[-2][\delta_{x,w}e^{(1)}_{p,j}-\delta_{p,j}e^{(1)}_{w,x},e_{i,w}^{(1)}]\nonumber\\
&\quad+\sum_{x,w\leq m-n}\limits e_{p,x}^{(1)}(\delta_{q,w}e^{(1)}_{x,j}-\delta_{x,j}e^{(1)}_{w,q})e_{i,w}^{(1)}+\sum_{x,w\leq m-n}\limits e_{p,x}^{(1)}e_{w,j}^{(1)}(\delta_{q,i}e^{(1)}_{x,w}-\delta_{x,w}e^{(1)}_{i,q})\nonumber\\
&\quad+\sum_{x,w\leq m-n}\limits e_{p,x}^{(1)}[-2](\alpha_1\delta_{q,w}\delta_{x,j}+\delta_{x,q}\delta_{w,j})e_{i,w}^{(1)}+\sum_{x,w\leq m-n}\limits e_{p,x}^{(1)}[-2]e_{w,j}^{(1)}(\delta_{q,i}\delta_{w,x}\alpha_1+\delta_{w,i}\delta_{x,q})\nonumber\\
&\quad+\sum_{x,w\leq m-n}\limits e_{p,x}^{(1)}[-2][(\delta_{q,w}e^{(1)}_{x,j}-\delta_{x,j}e^{(1)}_{w,q}),e_{i,w}^{(1)}]\nonumber\\
&\quad+\sum_{\substack{x,w\leq m-n}}\limits e_{p,x}^{(1)}[-3](\delta_{q,w}e^{(1)}_{x,j}-\delta_{x,j}e^{(1)}_{w,q},e_{i,w}^{(1)})\nonumber\\
&\quad+\sum_{\substack{x,w\leq m-n}}\limits e_{x,q}^{(1)}[-3](\delta_{x,w}e^{(1)}_{p,j}-\delta_{p,j}e^{(1)}_{w,x},e_{i,w}^{(1)})\nonumber\\
&=\sum_{x,w\leq m-n}\limits e_{x,q}^{(1)}(\delta_{x,w}e^{(1)}_{p,j}-\delta_{p,j}e^{(1)}_{w,x})e_{i,w}^{(1)}+\sum_{x,w\leq m-n}\limits e_{x,q}^{(1)}e_{w,j}^{(1)}(\delta_{x,i}e^{(1)}_{p,w}-\delta_{p,w}e^{(1)}_{i,x})\nonumber\\
&\quad+\sum_{x,w\leq m-n}\limits e_{x,q}^{(1)}[-2](\alpha_1\delta_{p,j}\delta_{x,w}+\delta_{p,x}\delta_{w,j})e_{i,w}^{(1)}+\sum_{x,w\leq m-n}\limits e_{x,q}^{(1)}[-2]e_{w,j}^{(1)}(\delta_{x,i}\delta_{w,p}\alpha_1+\delta_{p,x}\delta_{i,w})\nonumber\\
&\quad+\sum_{x,w\leq m-n}\limits \delta_{x,w}e_{x,q}^{(1)}[-2](\delta_{i,j}e^{(1)}_{p,w}-\delta_{p,w}e^{(1)}_{i,j})-\sum_{x,w\leq m-n}\limits\delta_{p,j} e_{x,q}^{(1)}[-2](\delta_{i,x}e^{(1)}_{w,w}-e^{(1)}_{i,x})\nonumber\\
&\quad+\sum_{x,w\leq m-n}\limits e_{p,x}^{(1)}(\delta_{q,w}e^{(1)}_{x,j}-\delta_{x,j}e^{(1)}_{w,q})e_{i,w}^{(1)}+\sum_{x,w\leq m-n}\limits e_{p,x}^{(1)}e_{w,j}^{(1)}(\delta_{q,i}e^{(1)}_{x,w}-\delta_{x,w}e^{(1)}_{i,q})\nonumber\\
&\quad+\sum_{x,w\leq m-n}\limits e_{p,x}^{(1)}[-2](\alpha_1\delta_{q,w}\delta_{x,j}+\delta_{x,q}\delta_{w,j})e_{i,w}^{(1)}+\sum_{x,w\leq m-n}\limits e_{p,x}^{(1)}[-2]e_{w,j}^{(1)}(\delta_{q,i}\delta_{w,x}\alpha_1+\delta_{w,i}\delta_{x,q})\nonumber\\
&\quad+\sum_{x,w\leq m-n}\limits \delta_{q,w}e_{p,x}^{(1)}[-2](\delta_{i,j}e^{(1)}_{x,w}-\delta_{x,w}e^{(1)}_{i,j})-\sum_{x,w\leq m-n}\limits \delta_{x,j}e_{p,x}^{(1)}[-2](\delta_{i,q}e^{(1)}_{w,w}-e^{(1)}_{i,q})\nonumber\\
&\quad+\sum_{\substack{x,w\leq m-n}}\limits \delta_{q,w}e_{p,x}^{(1)}[-3](\alpha_1\delta_{i,j}\delta_{x,w}+\delta_{x,j}\delta_{i,w})\nonumber\\
&\quad-\sum_{\substack{x,w\leq m-n}}\limits \delta_{x,j}e_{p,x}^{(1)}[-3](\delta_{i,q}\alpha_1+\delta_{w,q}\delta_{i,w})\nonumber\\
&\quad+\sum_{\substack{x,w\leq m-n}}\limits \delta_{x,w}e_{x,q}^{(1)}[-3](\delta_{j,i}\delta_{p,w}\alpha_1+\delta_{p,j}\delta_{i,w})\nonumber\\
&\quad-\sum_{\substack{x,w\leq m-n}}\limits \delta_{p,j}e_{x,q}^{(1)}[-3](\delta_{i,x}\alpha_1+\delta_{x,w}\delta_{i,w})\nonumber\\
&=\sum_{x\leq m-n}\limits e_{x,q}^{(1)}e^{(1)}_{p,j}e_{i,x}^{(1)}-\delta_{p,j}\sum_{x,w\leq m-n}\limits e_{x,q}^{(1)}e^{(1)}_{w,x}e_{i,w}^{(1)}\nonumber\\
&\quad+0-0+\delta_{p,j}\alpha_1\sum_{x\leq m-n}\limits e_{x,q}^{(1)}[-2]e_{i,x}^{(1)}+0+0+0\nonumber\\
&\quad+\delta_{i,j}\sum_{x\leq m-n}\limits e_{x,q}^{(1)}[-2]e^{(1)}_{p,x}-0\nonumber\\
&\quad+0+\delta_{p,j}(m-n)\sum_{x\leq m-n}\limits e_{x,q}^{(1)}[-2]e_{i,x}^{(1)}\nonumber\\
&\quad+\delta(q\leq m-n)\sum_{x\leq m-n}\limits e_{p,x}^{(1)}e^{(1)}_{x,j}e_{i,q}^{(1)}-\delta(j\leq m-n)\sum_{w\leq m-n}\limits e_{p,j}^{(1)}e^{(1)}_{w,q}e_{i,w}^{(1)}\nonumber\\
&\quad+\delta_{q,i}\sum_{x,w\leq m-n}\limits e_{p,x}^{(1)}e_{w,j}^{(1)}e^{(1)}_{x,w}-\sum_{x\leq m-n}\limits e_{p,x}^{(1)}e_{x,j}^{(1)}e^{(1)}_{i,q}\nonumber\\
&\quad+\delta(q,j\leq m-n)\alpha_1\sum_{x,w\leq m-n}\limits e_{p,j}^{(1)}[-2]e_{i,q}^{(1)}+\delta(q,j\leq m-n)e_{p,q}^{(1)}[-2]e_{i,j}^{(1)}\nonumber\\
&\quad+\delta_{q,i}\alpha_1\sum_{x\leq m-n}\limits e_{p,x}^{(1)}[-2]e_{x,j}^{(1)}+0\nonumber\\
&\quad+\delta_{i,j}\delta(q\leq m-n)\sum_{x\leq m-n}\limits e_{p,x}^{(1)}[-2]e^{(1)}_{x,q}-\delta(q\leq m-n)e_{p,q}^{(1)}[-2]e^{(1)}_{i,j}\nonumber\\
&\quad-\delta_{i,q}\delta(j\leq m-n)\sum_{w\leq m-n}\limits e_{p,j}^{(1)}[-2]e^{(1)}_{w,w}+\delta(j\leq m-n)(m-n)e_{p,j}^{(1)}[-2]e^{(1)}_{i,q}\nonumber\\
&\quad+\delta_{i,j}\delta(q\leq m-n)\alpha_1e_{p,q}^{(1)}[-3]+0\nonumber\\
&\quad-\delta_{i,q}\delta(j\leq m-n)\alpha_1(m-n)e_{p,j}^{(1)}[-3]+0\nonumber\\
&\quad+0+0-0-0.\label{1-5}
\end{align}
By a direct computation, we obtain
\begin{align}
&\quad(\sum_{x\leq m-n}\limits e_{x,q}^{(1)}e_{p,x}^{(1)})_{(0)}\alpha_2e^{(1)}_{i,j}[-2]\nonumber\\
&=\alpha_2\sum_{x\leq m-n}\limits e_{x,q}^{(1)}[e_{p,x}^{(1)},e^{(1)}_{i,j}][-2]+\alpha_2\sum_{x\leq m-n}\limits e_{x,q}^{(1)}[-2][e_{p,x}^{(1)},e^{(1)}_{i,j}]\nonumber\\
&\quad+2\alpha_2\sum_{x\leq m-n}\limits e_{x,q}^{(1)}[-3](e_{p,x}^{(1)},e^{(1)}_{i,j})\nonumber\\
&\quad+\alpha_2\sum_{x\leq m-n}\limits e_{p,x}^{(1)}[e_{x,q}^{(1)},e^{(1)}_{i,j}][-2]+\alpha_2\sum_{x\leq m-n}\limits e_{p,x}^{(1)}[-2][e_{x,q}^{(1)},e^{(1)}_{i,j}]\nonumber\\
&\quad+2\alpha_2\sum_{x\leq m-n}\limits e_{p,x}^{(1)}[-3](e_{x,q}^{(1)},e^{(1)}_{i,j})\nonumber\\
&=\alpha_2\sum_{x\leq m-n}\limits e_{x,q}^{(1)}(\delta_{x,i}e^{(1)}_{p,j}-\delta_{p,j}e^{(1)}_{i,x})[-2]+\alpha_2\sum_{x\leq m-n}\limits e_{x,q}^{(1)}[-2](\delta_{x,i}e^{(1)}_{p,j}-\delta_{p,j}e^{(1)}_{i,x})\nonumber\\
&\quad+2\alpha_2\sum_{x\leq m-n}\limits e_{x,q}^{(1)}[-3](\alpha_1\delta_{p,j}\delta_{x,i}\alpha_1+\delta_{p,x}\delta_{i,j})\nonumber\\
&\quad+\alpha_2\sum_{x\leq m-n}\limits e_{p,x}^{(1)}(\delta_{q,i}e^{(1)}_{x,j}-\delta_{x,j}e^{(1)}_{i,q})[-2]+\alpha_2\sum_{x\leq m-n}\limits e_{p,x}^{(1)}[-2](\delta_{q,i}e^{(1)}_{x,j}-\delta_{x,j}e^{(1)}_{i,q})\nonumber\\
&\quad+2\alpha_2\sum_{x\leq m-n}\limits e_{p,x}^{(1)}[-3](\alpha_1\delta_{q,i}\delta_{x,j}+\delta_{x,q}\delta_{i,j})\nonumber\\
&=0-\delta_{p,j}\alpha_2\sum_{x\leq m-n}\limits e_{x,q}^{(1)}e^{(1)}_{i,x}[-2]+0-\delta_{p,j}\alpha_2\sum_{x\leq m-n}\limits e_{x,q}^{(1)}[-2]e^{(1)}_{i,x}\nonumber\\
&\quad+0+0\nonumber\\
&\quad+\delta_{q,i}\alpha_2\sum_{x\leq m-n}\limits e_{p,x}^{(1)}e^{(1)}_{x,j}[-2]-\delta(j\leq m-n)\alpha_2 e_{p,j}^{(1)}e^{(1)}_{i,q}[-2]\nonumber\\
&\quad+\delta_{q,i}\alpha_2\sum_{x\leq m-n}\limits e_{p,x}^{(1)}[-2]e^{(1)}_{x,j}-\delta(j\leq m-n)\alpha_2e_{p,j}^{(1)}[-2]e^{(1)}_{i,q}\nonumber\\
&\quad+2\delta_{q,i}\delta(j\leq m-n)\alpha_1\alpha_2e_{p,x}^{(1)}[-3]+2\delta_{i,j}\delta(q\leq m-n)\alpha_2 e_{p,q}^{(1)}[-3].\label{1-6}
\end{align}
We rearrange the sum of \eqref{1-1}-\eqref{1-6}. 
First, let us compute
\begin{align}
&\quad\eqref{1-1}_1+\eqref{1-1}_3+\eqref{1-2}_1\nonumber\\
&=\delta(q>m-n)\sum_{x>m-n}\limits e_{p,x}^{(2)}e^{(1)}_{x,j}e_{i,q}^{(2)}+\sum_{w>m-n}\limits e_{i,q}^{(1)}e^{(1)}_{w,j}e_{p,w}^{(2)}\nonumber\\
&\quad-\delta(q\leq m-n)\sum_{x>m-n}\limits e_{p,x}^{(2)}e^{(1)}_{x,j}e_{i,q}^{(1)}\nonumber\\
&=\delta(q>m-n)\sum_{x>m-n}\limits e_{p,x}^{(2)}e^{(1)}_{x,j}e_{i,q}^{(2)}+\sum_{w>m-n}\limits e^{(1)}_{w,j}e_{p,w}^{(2)}e_{i,q}^{(1)}\nonumber\\
&\quad+\sum_{w>m-n}\limits [e_{i,q}^{(1)},e^{(1)}_{w,j}][-2]e_{p,w}^{(2)}\nonumber\\
&\quad-\delta(q\leq m-n)\sum_{x>m-n}\limits e_{p,x}^{(2)}e^{(1)}_{x,j}e_{i,q}^{(1)}\nonumber\\
&=\delta(q>m-n)\sum_{x>m-n}\limits e_{p,x}^{(2)}e^{(1)}_{x,j}e_{i,q}^{(2)}+\sum_{w>m-n}\limits e^{(1)}_{w,j}e_{p,w}^{(2)}e_{i,q}^{(1)}\nonumber\\
&\quad+\sum_{w>m-n}\limits (e^{(1)}_{i.j}\delta_{q,w}-e^{(1)}_{w,q}\delta_{i,j})[-2]e_{p,w}^{(2)}-\delta(q\leq m-n)\sum_{x>m-n}\limits e_{p,x}^{(2)}e^{(1)}_{x,j}e_{i,q}^{(1)}\nonumber\\
&=\delta(q>m-n)\sum_{x>m-n}\limits e_{p,x}^{(2)}e^{(1)}_{x,j}(e_{i,q}^{(2)}+e^{(1)}_{i,q})\nonumber\\
&\quad+\delta(q>m-n)e^{(1)}_{i.j}[-2]e_{p,q}^{(2)}-\sum_{w>m-n}\limits e^{(1)}_{w,q}[-2]\delta_{i,j}e_{p,w}^{(2)}\nonumber\\
&=\delta(q>m-n)\sum_{x>m-n}\limits (e_{p,x}^{(2)}e^{(1)}_{x,j})_{(-1)}(e_{i,q}^{(2)}+e^{(1)}_{i,q})\nonumber\\
&\quad-\delta(q>m-n)\sum_{x>m-n}\limits e_{p,x}^{(2)}[-2][e^{(1)}_{x,j},(e_{i,q}^{(2)}+e^{(1)}_{i,q})]\nonumber\\
&\quad-\delta(q>m-n)\sum_{x>m-n}\limits e_{p,x}^{(2)}[-3](e^{(1)}_{x,j},(e_{i,q}^{(2)}+e^{(1)}_{i,q}))\nonumber\\
&\quad-\delta(q>m-n)\sum_{x>m-n}\limits e^{(1)}_{x,j}[-2][e_{p,x}^{(2)},(e_{i,q}^{(2)}+e^{(1)}_{i,q})]\nonumber\\
&\quad-\delta(q>m-n)\sum_{x>m-n}\limits e^{(1)}_{x,j}[-3](e_{p,x}^{(2)},(e_{i,q}^{(2)}+e^{(1)}_{i,q}))\nonumber\\
&\quad+\delta(q>m-n)e^{(1)}_{i.j}[-2]e_{p,q}^{(2)}-\delta_{i,j}\sum_{w>m-n}\limits e^{(1)}_{w,q}[-2]e_{p,w}^{(2)}\nonumber\\
&=\delta(q>m-n)\sum_{x>m-n}\limits (e_{p,x}^{(2)}e^{(1)}_{x,j})_{(-1)}(e_{i,q}^{(2)}+e^{(1)}_{i,q})\nonumber\\
&\quad-\delta(q>m-n)\sum_{x>m-n}\limits e_{p,x}^{(2)}[-2](\delta_{i,j}e^{(1)}_{x,q}-\delta_{q,x}e^{(1)}_{i,j})\nonumber\\
&\quad-\delta(q>m-n)\sum_{x>m-n}\limits e_{p,x}^{(2)}[-3](\delta_{x,q}\delta_{i,j}\alpha_1+\delta_{x,j}\delta_{i,q})\nonumber\\
&\quad-\delta(q>m-n)\sum_{x>m-n}\limits e^{(1)}_{x,j}[-2](\delta_{x,i}e^{(2)}_{p,q}-\delta_{p,q}e^{(2)}_{i,x})\nonumber\\
&\quad-\delta(q>m-n)\sum_{x>m-n}\limits e^{(1)}_{x,j}[-3](\delta_{x,i}\delta_{p,q}\alpha_2+\delta_{p,x}\delta_{i,q})\nonumber\\
&\quad+\delta(q>m-n)e^{(1)}_{i.j}[-2]e_{p,q}^{(2)}-\delta_{i,j}\sum_{w>m-n}\limits e^{(1)}_{w,q}[-2]e_{p,w}^{(2)}\nonumber\\
&=\delta(q>m-n)\sum_{x>m-n}\limits (e_{p,x}^{(2)}e^{(1)}_{x,j})_{(-1)}(e_{i,q}^{(2)}+e^{(1)}_{i,q})\nonumber\\
&\quad-\delta_{i,j}\delta(q>m-n)\sum_{x>m-n}\limits e_{p,x}^{(2)}[-2]e^{(1)}_{x,q}+\delta(q>m-n)e_{p,q}^{(2)}[-2]e^{(1)}_{i,j}\nonumber\\
&\quad-\delta_{i,j}\delta(q>m-n)\alpha_1e_{p,q}^{(2)}[-3]-\delta_{i,q}\delta(q,j>m-n)e_{p,j}^{(2)}[-3]\nonumber\\
&\quad-\delta(q>m-n)e^{(1)}_{i,j}[-2]e^{(2)}_{p,q}+\delta(q>m-n)\sum_{x>m-n}\limits e^{(1)}_{x,j}[-2]\delta_{p,q}e^{(2)}_{i,x}\nonumber\\
&\quad-\delta_{p,q}\delta(q>m-n)\alpha_2e^{(1)}_{i,j}[-3]-\delta_{i,q}\delta(q>m-n)e^{(1)}_{p,j}[-3]\nonumber\\
&\quad+\delta(q>m-n)e^{(1)}_{i.j}[-2]e_{p,q}^{(2)}-\delta_{i,j}\sum_{w>m-n}\limits e^{(1)}_{w,q}[-2]e_{p,w}^{(2)}.\label{1-1-1}
\end{align}
Next, let us compute
\begin{align}
&\quad\eqref{1-4}_2+\eqref{1-5}_6+\eqref{1-5}_9\nonumber\\
&=-\delta(q>m-n)\sum_{x\leq m-n}\limits e_{p,x}^{(1)}e^{(1)}_{x,j}e_{i,q}^{(2)}\nonumber\\
&\quad+\delta(q\leq m-n)\sum_{x\leq m-n}\limits e_{p,x}^{(1)}e^{(1)}_{x,j}e_{i,q}^{(1)}-\sum_{x\leq m-n}\limits e_{p,x}^{(1)}e^{(1)}_{x,j}e_{i,q}^{(1)}\nonumber\\
&=-\delta(q>m-n)\sum_{x\leq m-n}\limits e_{p,x}^{(1)}e^{(1)}_{x,j}(e_{i,q}^{(2)}+e^{(1)}_{i,q})\nonumber\\
&=-\delta(q>m-n)\sum_{x\leq m-n}\limits (e_{p,x}^{(1)}e^{(1)}_{x,j})_{(-1)}(e_{i,q}^{(2)}+e^{(1)}_{i,q})\nonumber\\
&\quad+\delta(q>m-n)\sum_{x\leq m-n}\limits e_{p,x}^{(1)}[-2][e^{(1)}_{x,j},(e_{i,q}^{(2)}+e^{(1)}_{i,q})]\nonumber\\
&\quad+\delta(q>m-n)\sum_{x\leq m-n}\limits e_{p,x}^{(1)}[-3](e^{(1)}_{x,j},(e_{i,q}^{(2)}+e^{(1)}_{i,q}))\nonumber\\
&\quad+\delta(q>m-n)\sum_{x\leq m-n}\limits e^{(1)}_{x,j}[-2][e_{p,x}^{(1)},(e_{i,q}^{(2)}+e^{(1)}_{i,q}))]\nonumber\\
&\quad+\delta(q>m-n)\sum_{x\leq m-n}\limits e^{(1)}_{x,j}[-3](e_{p,x}^{(1)},(e_{i,q}^{(2)}+e^{(1)}_{i,q}))\nonumber\\
&=-\delta(q>m-n)\sum_{x\leq m-n}\limits (e_{p,x}^{(1)}e^{(1)}_{x,j})_{(-1)}(e_{i,q}^{(2)}+e^{(1)}_{i,q})\nonumber\\
&\quad+\delta(q>m-n)\sum_{x\leq m-n}\limits e_{p,x}^{(1)}[-2](\delta_{i,j}e^{(1)}_{x,q}-\delta_{x,q}e^{(1)}_{i,j})\nonumber\\
&\quad+\delta(q>m-n)\sum_{x\leq m-n}\limits e_{p,x}^{(1)}[-3](\delta_{x,q}\delta_{i,j}\alpha_1+\delta_{x,j}\delta_{i,q})\nonumber\\
&\quad+\delta(q>m-n)\sum_{x\leq m-n}\limits e^{(1)}_{x,j}[-2](\delta_{x,i}e^{(1)}_{p,q}-\delta_{p,q}e^{(1)}_{x,i})\nonumber\\
&\quad+\delta(q>m-n)\sum_{x\leq m-n}\limits e^{(1)}_{x,j}[-3](\delta_{p,q}\delta_{x,i}\alpha_1+\delta_{p,x}\delta_{i,q})\nonumber\\
&=-\delta(q>m-n)\sum_{x\leq m-n}\limits (e^{(1)}_{x,j}e_{p,x}^{(1)})_{(-1)}(e_{i,q}^{(2)}+e^{(1)}_{i,q})-\sum_{x\leq m-n}\limits [e_{p,x}^{(1)},e^{(1)}_{x,j}][-2](e_{i,q}^{(2)}+e^{(1)}_{i,q})\nonumber\\
&\quad+\delta_{i,j}\delta(q>m-n)\sum_{x\leq m-n}\limits e_{p,x}^{(1)}[-2]e^{(1)}_{x,q}-0+0+\delta_{i,q}\delta(q>m-n)\delta(j\leq m-n)e_{p,j}^{(1)}[-3]\nonumber\\
&\quad+0-\delta_{p,q}\delta(q>m-n)\sum_{x\leq m-n}\limits e^{(1)}_{x,j}[-2]e^{(1)}_{i,x}+0+0\nonumber\\
&=-\delta(q>m-n)\sum_{x\leq m-n}\limits (e^{(1)}_{x,j}e_{p,x}^{(1)})_{(-1)}(e_{i,q}^{(2)}+e^{(1)}_{i,q})\nonumber\\
&\quad-\delta(q>m-n)\sum_{x\leq m-n}\limits (e^{(1)}_{p,j}-\delta_{p,j}e^{(1)}_{x,x})[-2](e_{i,q}^{(2)}+e^{(1)}_{i,q})\nonumber\\
&\quad+\delta_{i,j}\delta(q>m-n)\sum_{x\leq m-n}\limits e_{p,x}^{(1)}[-2]e^{(1)}_{x,q}-0+0+\delta_{i,q}\delta(q>m-n)\delta(j\leq m-n)e_{p,j}^{(1)}[-3]\nonumber\\
&\quad-\delta_{p,q}\delta(q>m-n)\sum_{x\leq m-n}\limits e^{(1)}_{x,j}[-2]e^{(1)}_{i,x}\nonumber\\
&=-\delta(q>m-n)\sum_{x\leq m-n}\limits (e^{(1)}_{x,j}e_{p,x}^{(1)})_{(-1)}(e_{i,q}^{(2)}+e^{(1)}_{i,q})\nonumber\\
&\quad-\delta(q>m-n)(m-n)e^{(1)}_{p,j}[-2](e_{i,q}^{(2)}+e^{(1)}_{i,q})+\delta(q>m-n)\delta_{p,j}\sum_{x\leq m-n}\limits e^{(1)}_{x,x}[-2](e_{i,q}^{(2)}+e^{(1)}_{i,q})\nonumber\\
&\quad+\delta_{i,j}\delta(q>m-n)\sum_{x\leq m-n}\limits e_{p,x}^{(1)}[-2]e^{(1)}_{x,q}+\delta_{i,q}\delta(q>m-n)\delta(j\leq m-n)e_{p,j}^{(1)}[-3]\nonumber\\
&\quad-\delta_{p,q}\delta(q>m-n)\sum_{x\leq m-n}\limits e^{(1)}_{x,j}[-2]e^{(1)}_{i,x}.\label{1-1-2}
\end{align}
By the definition of $W^{(2)}_{i,j}$, we obtain
\begin{align}
&\quad\eqref{1-1-1}_1+\eqref{1-1-2}_1\nonumber\\
&=\delta(q>m-n)W^{(2)}_{p,j}W^{(1)}_{i,q}+\delta(q>m-n)\alpha_2e^{(1)}_{p,j}[-2]e_{i,q}^{(2)}+\delta(q>m-n)\alpha_2e^{(1)}_{p,j}[-2]e_{i,q}^{(1)}.\label{1-1-3}
\end{align}
Next, since
\begin{align*}
&\quad\eqref{1-1}_4+\eqref{1-4}_3+\eqref{1-5}_1+\eqref{1-5}_7\nonumber\\
&=-\sum_{x>m-n}\limits e_{x,q}^{(1)}e_{p,j}^{(1)}e^{(2)}_{i,x}+\delta(j\leq m-n)\sum_{w>m-n}\limits e_{p,j}^{(1)}e^{(1)}_{w,q}e_{i,w}^{(2)}\\
&\quad+\sum_{\substack{w\leq m-n}}\limits e^{(1)}_{x,q}e_{p,j}^{(1)}e_{i,x}^{(1)}-\delta(j\leq m-n)\sum_{w\leq m-n}\limits e_{p,j}^{(1)}e^{(1)}_{w,q}e_{i,w}^{(1)}\\
&=-\sum_{x>m-n}\limits e_{p,j}^{(1)}e_{x,q}^{(1)}e^{(2)}_{i,x}-\sum_{x>m-n}\limits [e_{x,q}^{(1)},e_{p,j}^{(1)}][-2]e^{(2)}_{i,x}\\
&\quad+\delta(j\leq m-n)\sum_{w>m-n}\limits e_{p,j}^{(1)}e^{(1)}_{w,q}e_{i,w}^{(2)}\\
&\quad+\sum_{\substack{w\leq m-n}}\limits e_{p,j}^{(1)}e^{(1)}_{x,q}e_{i,x}^{(1)}+\sum_{\substack{w\leq m-n}}\limits [e^{(1)}_{x,q},e_{p,j}^{(1)}][-2]e_{i,x}^{(1)}\\
&\quad-\delta(j\leq m-n)\sum_{w\leq m-n}\limits e_{p,j}^{(1)}e^{(1)}_{w,q}e_{i,w}^{(1)}\\
&=-\delta(j>m-n)e^{(1)}_{p,j}(\sum_{w>m-n}\limits e^{(1)}_{w,q}e_{i,w}^{(2)}-\sum_{w\leq m-n}\limits e^{(1)}_{w,q}e_{i,w}^{(1)})\\
&\quad+\sum_{\substack{w\leq m-n}}\limits [e^{(1)}_{x,q},e_{p,j}^{(1)}][-2]e_{i,x}^{(1)}-\sum_{x>m-n}\limits [e_{x,q}^{(1)},e_{p,j}^{(1)}][-2]e^{(2)}_{i,x}
\end{align*}
holds, we obtain
\begin{align}
&\quad\eqref{1-1}_2+\eqref{1-2}_2+\eqref{1-1}_4+\eqref{1-4}_3+\eqref{1-5}_1+\eqref{1-5}_7\nonumber\\
&=-\delta(j>m-n)\sum_{w>m-n}\limits e_{p,j}^{(2)}e^{(1)}_{w,q}e_{i,w}^{(2)}\delta(j>m-n)\sum_{\substack{w\leq m-n}}\limits e_{p,j}^{(2)}e^{(1)}_{w,q}e_{i,w}^{(1)}\nonumber\\
&\quad-\delta(j>m-n)e^{(1)}_{p,j}(\sum_{w>m-n}\limits e^{(1)}_{w,q}e_{i,w}^{(2)}-\sum_{w\leq m-n}\limits e^{(1)}_{w,q}e_{i,w}^{(1)})\nonumber\\
&\quad+\sum_{\substack{w\leq m-n}}\limits [e^{(1)}_{x,q},e_{p,j}^{(1)}][-2]e_{i,x}^{(1)}-\sum_{x>m-n}\limits [e_{x,q}^{(1)},e_{p,j}^{(1)}][-2]e^{(2)}_{i,x}\nonumber\\
&=-\delta(j>m-n)e_{p,j}^{(2)}(\sum_{w>m-n}\limits e^{(1)}_{w,q}e_{i,w}^{(2)}-\sum_{\substack{w\leq m-n}}\limits e^{(1)}_{w,q}e_{i,w}^{(1)})\nonumber\\
&\quad-\delta(j>m-n)e^{(1)}_{p,j}(\sum_{w>m-n}\limits e^{(1)}_{w,q}e_{i,w}^{(2)}-\sum_{w\leq m-n}\limits e^{(1)}_{w,q}e_{i,w}^{(1)})\nonumber\\
&\quad+\sum_{\substack{w\leq m-n}}\limits [e^{(1)}_{x,q},e_{p,j}^{(1)}][-2]e_{i,x}^{(1)}-\sum_{x>m-n}\limits [e_{x,q}^{(1)},e_{p,j}^{(1)}][-2]e^{(2)}_{i,x}\nonumber\\
&=-\delta(j>m-n)W^{(1)}_{p,j}(W^{(2)}_{i,q}+\alpha_2 e^{(1)}_{i,q}[-2])\nonumber\\
&\quad+\sum_{\substack{w\leq m-n}}\limits (\delta_{p,q}e^{(1)}_{x,j}-\delta_{x,j}e^{(1)}_{p,q})[-2]e_{i,x}^{(1)}-\sum_{x>m-n}\limits (\delta_{p,q}e^{(1)}_{x,j}-\delta_{x,j}e^{(1)}_{p,q})[-2]e^{(2)}_{i,x}\nonumber\\
&=-\delta(j>m-n)W^{(1)}_{p,j}W^{(2)}_{i,q}-\delta(j>m-n)\alpha_2e^{(1)}_{p,j} e^{(1)}_{i,q}[-2]-\delta(j>m-n)\alpha_2e^{(2)}_{p,j} e^{(1)}_{i,q}[-2]\nonumber\\
&\quad+\sum_{\substack{x\leq m-n}}\limits (\delta_{p,q}e^{(1)}_{x,j}-\delta_{x,j}e^{(1)}_{p,q})[-2]e_{i,x}^{(1)}-\sum_{x>m-n}\limits (\delta_{p,q}e^{(1)}_{x,j}-\delta_{x,j}e^{(1)}_{p,q})[-2]e^{(2)}_{i,x}\nonumber\\
&=-\delta(j>m-n)W^{(1)}_{p,j}W^{(2)}_{i,q}-\delta(j>m-n)\alpha_2e^{(1)}_{p,j} e^{(1)}_{i,q}[-2]-\delta(j>m-n)\alpha_2e^{(2)}_{p,j} e^{(1)}_{i,q}[-2]\nonumber\\
&\quad+\delta_{p,q}\sum_{\substack{x\leq m-n}}\limits e^{(1)}_{x,j}[-2]e_{i,x}^{(1)}-\delta(j\leq m-n)e^{(1)}_{p,q}[-2]e_{i,j}^{(1)}\nonumber\\
&\quad-\delta_{p,q}\sum_{x>m-n}\limits e^{(1)}_{x,j}[-2]e^{(2)}_{i,x}+\delta(j>m-n)e^{(1)}_{p,q}[-2]e^{(2)}_{i,j}.\label{1-1-4}
\end{align}

Next, let us compute the term including $e_{p,j}^{(a)}[-2]e^{(b)}_{i,q}$ or $e^{(b)}_{i,q}e_{p,j}^{(a)}[-2]$. We divide
\begin{align*}
\eqref{1-1-2}_2&=-\delta(q>m-n)(m-n)e^{(1)}_{p,j}[-2]e^{(2)}_{i,q}-\delta(q>m-n)(m-n)e^{(1)}_{p,j}[-2]e^{(1)}_{i,q}
\end{align*}
and denote it by $\eqref{1-1-2}_{2,1}$ and $\eqref{1-1-2}_{2,2}$ respectively.
Since
\begin{align*}
&\quad\eqref{1-2}_4+\eqref{1-2}_8+\eqref{1-3}_4\\
&=-\delta(j>m-n)\delta(q\leq m-n)\alpha_1e^{(2)}_{p,j}[-2]e^{(1)}_{i,q}-\delta(j>m-n)(m-n)e^{(2)}_{p,j}[-2]e^{(1)}_{i,q}\\
&\quad+\delta(j>m-n)\alpha_2e^{(2)}_{p,j}[-2]e^{(1)}_{i,q}\\
&=\delta(q,j>m-n)\alpha_1e^{(2)}_{p,j}[-2]e^{(1)}_{i,q},\\
&\quad\eqref{1-4}_4+\eqref{1-1-2}_{2,1}+\eqref{1-1-3}_2\\
&=-\delta(q>m-n)\delta(j\leq m-n)\alpha_1e^{(1)}_{p,j}[-2]e^{(2)}_{i,q}-\delta(q>m-n)(m-n)e^{(1)}_{p,j}[-2]e^{(2)}_{i,q}\\
&\quad+\delta(q>m-n)\alpha_2e^{(1)}_{p,j}[-2]e_{i,q}^{(2)}\\
&=\delta(q,j>m-n)\alpha_1e^{(1)}_{p,j}[-2]e_{i,q}^{(2)},\\
&\quad\eqref{1-5}_{10}+\eqref{1-5}_{16}+\eqref{1-6}_6+\eqref{1-1-2}_{2,2}+\eqref{1-1-3}_3\\
&=\delta(q,j\leq m-n)\alpha_1e^{(1)}_{p,j}[-2]e^{(1)}_{i,q}+\delta(j\leq m-n)(m-n)e^{(1)}_{p,j}[-2]e^{(1)}_{i,q}\\
&\quad-\delta(j\leq m-n)\alpha_2e^{(1)}_{p,j}[-2]e^{(1)}_{i,q}-\delta(q>m-n)(m-n)e^{(1)}_{p,j}[-2]e^{(1)}_{i,q}\\
&\quad+\delta(q>m-n)\alpha_2e^{(1)}_{p,j}[-2]e_{i,q}^{(1)}\\
&=\delta(q,j>m-n)\alpha_1e^{(1)}_{p,j}[-2]e_{i,q}^{(1)}
\end{align*}
hold, we obtain
\begin{align*}
&\quad\eqref{1-1}_5+\eqref{1-2}_4+\eqref{1-2}_8+\eqref{1-3}_4+\eqref{1-4}_4\\
&\qquad\qquad+\eqref{1-1-2}_{2,1}+\eqref{1-1-3}_2+\eqref{1-5}_{10}+\eqref{1-5}_{16}+\eqref{1-6}_6+\eqref{1-1-2}_{2,2}+\eqref{1-1-3}_3\\
&=\delta(q,j>m-n)\alpha_1e^{(2)}_{p,j}[-2]e^{(2)}_{i,q}+\delta(q,j>m-n)\alpha_1e^{(2)}_{p,j}[-2]e^{(1)}_{i,q}\\
&\quad+\delta(q,j>m-n)\alpha_1e^{(1)}_{p,j}[-2]e_{i,q}^{(2)}+\delta(q,j>m-n)\alpha_1e^{(1)}_{p,j}[-2]e_{i,q}^{(1)}\\
&=\delta(q,j>m-n)\alpha_1(\partial W^{(1)}_{p,j})_{(-1)}W^{(1)}_{i,q}.
\end{align*}
Next, let us compute the term including $e_{i,q}^{(a)}[-2]e_{p,j}^{(b)}$ or $e_{p,j}^{(b)}e_{i,q}^{(a)}[-2]$. Since
\begin{align*}
&\quad\eqref{1-1}_7+\eqref{1-6}_4+\eqref{1-1-4}_2\\
&=\alpha_2e^{(1)}_{i,q}[-2]e^{(1)}_{p,j}-\delta(j\leq m-n)\alpha_2e^{(1)}_{p,j} e^{(1)}_{i,q}[-2]-\delta(j>m-n)\alpha_2e^{(1)}_{p,j} e^{(1)}_{i,q}[-2]\\
&=\alpha_2[e^{(1)}_{i,q},e^{(1)}_{p,j}][-3]\\
&=\alpha_2(\delta_{p,q}e^{(1)}_{i,j}-\delta_{i,j}e^{(1)}_{p,q})[-3],\\
&\quad\eqref{1-3}_2+\eqref{1-1-4}_3\\
&=\delta(j>m-n)\alpha_2e^{(2)}_{p,j}e^{(1)}_{i,q}[-2]-\delta(j>m-n)\alpha_2e^{(2)}_{p,j} e^{(1)}_{i,q}[-2]\\
&=0
\end{align*}
hold, we have
\begin{align}
&\quad\eqref{1-1}_7+\eqref{1-3}_2+\eqref{1-6}_4+\eqref{1-1-4}_2+\eqref{1-1-4}_3\nonumber\\
&=\delta_{p,q}\alpha_2e^{(1)}_{i,j}[-3]-\delta_{i,j}\alpha_2e^{(1)}_{p,q}[-3].\label{1-1-5}
\end{align}
Next, we compute the sum of the terms including $e^{(a)}_{p,q}[-2]e^{(b)}_{i,j}$ or $e^{(b)}_{i,j}[-2]e^{(a)}_{p,q}$.
Since
\begin{align*}
&\quad\eqref{1-2}_5+\eqref{1-1-1}_3\\
&=-\delta(q>m-n)\delta(j\leq m-n)e_{p,q}^{(2)}[-2]e_{i,j}^{(1)}+\delta(q>m-n)e_{p,q}^{(2)}[-2]e^{(1)}_{i,j}\\
&=\delta(q,j>m-n)e_{p,q}^{(2)}[-2]e^{(1)}_{i,j},\\
&\quad\eqref{1-4}_5+\eqref{1-1-1}_6+\eqref{1-1-1}_{10}+\eqref{1-1-4}_7\\
&=-\delta(j> m-n)\delta(q\leq m-n)e^{(1)}_{p,q}[-2]e^{(2)}_{i,j}-\delta(q>m-n)e^{(1)}_{i,j}[-2]e^{(2)}_{p,q}\\
&\quad+\delta(q>m-n)e^{(1)}_{i.j}[-2]e_{p,q}^{(2)}+\delta(j>m-n)e^{(1)}_{p,q}[-2]e^{(2)}_{i,j}\\
&=\delta(q,j>m-n)e_{p,q}^{(1)}[-2]e^{(2)}_{i,j},\\
&\quad\eqref{1-1}_8+\eqref{1-5}_{11}+\eqref{1-5}_{14}+\eqref{1-1-4}_5\\
&=e_{p,q}^{(1)}[-2]e_{i,j}^{(1)}+\delta(q,j\leq m-n)e^{(1)}_{p,q}[-2]e^{(1)}_{i,j}\\
&\quad-\delta(q\leq m-n)e^{(1)}_{p,q}[-2]e^{(1)}_{i,j}-\delta(j\leq m-n)e^{(1)}_{p,q}[-2]e_{i,j}^{(1)}\\
&=\delta(q,j>m-n)e_{p,q}^{(1)}[-2]e^{(2)}_{i,j}
\end{align*}
hold, we have
\begin{align*}
&\quad\eqref{1-1}_6+\eqref{1-2}_5+\eqref{1-1-1}_3\\
&\qquad\qquad+\eqref{1-4}_5+\eqref{1-1-1}_6+\eqref{1-1-1}_{10}+\eqref{1-1-4}_7+\eqref{1-1}_8+\eqref{1-5}_{11}+\eqref{1-5}_{14}+\eqref{1-1-4}_5\\
&=\delta(q,j>m-n)e_{p,q}^{(2)}[-2]e_{i,j}^{(2)}+\delta(q,j>m-n)e_{p,q}^{(2)}[-2]e^{(1)}_{i,j}\\
&\quad+\delta(q,j>m-n)e_{p,q}^{(1)}[-2]e^{(2)}_{i,j}+\delta(q,j>m-n)e_{p,q}^{(1)}[-2]e^{(2)}_{i,j}\\
&=\delta(q,j>m-n)(\partial W^{(1)}_{p,q})_{(-1)}W^{(1)}_{i,j}.
\end{align*}
Next, let us compute the sum of terms including $\delta_{i,q}$.
By a direct computation, we obtain
\begin{align}
&\quad\eqref{1-2}_3+\eqref{1-5}_8\nonumber\\
&=-\delta_{q,i}\sum_{\substack{x>m-n\\w\leq m-n}}\limits e_{p,x}^{(2)}e_{w,j}^{(1)}e_{w,x}^{(1)}+\delta_{q,i}\sum_{x,w\leq m-n}\limits e^{(1)}_{p,x}e^{(1)}_{w,j}e^{(1)}_{x,w}\nonumber\\
&=-\delta_{q,i}\sum_{\substack{x>m-n\\w\leq m-n}}\limits e_{w,j}^{(1)}e_{p,x}^{(2)}e_{w,x}^{(1)}\nonumber\\
&\quad+\delta_{q,i}\sum_{x,w\leq m-n}\limits e^{(1)}_{w,j}e^{(1)}_{p,x}e^{(1)}_{x,w}+\delta_{q,i}\sum_{x,w\leq m-n}\limits [e^{(1)}_{p,x},e^{(1)}_{w,j}][-2]e^{(1)}_{x,w}\nonumber\\
&=-\delta_{q,i}\sum_{w\leq m-n}\limits e_{w,j}^{(1)}(\sum_{\substack{x>m-n}}\limits e_{p,x}^{(2)}e_{x,w}^{(1)}-\sum_{x\leq m-n}\limits e^{(1)}_{p,x}e^{(1)}_{x,w})\nonumber\\
&\quad+\delta_{q,i}\sum_{w\leq m-n}\limits e^{(1)}_{p,j}[-2]e^{(1)}_{w,w}-\delta_{q,i}\delta_{p,j}\sum_{x,w\leq m-n}\limits e^{(1)}_{w,x}[-2]e^{(1)}_{x,w}\nonumber\\
&=-\delta_{q,i}\sum_{w\leq m-n}\limits W^{(1)}_{w,j}(W^{(2)}_{p,w}+\alpha_2e^{(1)}_{p,w}[-2])+\delta_{q,i}\sum_{w\leq m-n}\limits\sum_{x\leq m-n}\limits e_{w,j}^{(1)}[e^{(1)}_{p,x},e^{(1)}_{x,w}][-2]\nonumber\\
&\quad-\delta_{q,i}\delta_{p,j}\sum_{x,w\leq m-n}\limits \partial(W^{(1)}_{w,x})_{(-1)}W^{(1)}_{x,w}\nonumber\\
&\quad+\delta_{q,i}(m-n)\sum_{w\leq m-n}\limits e^{(1)}_{w,j}e^{(1)}_{p,w}[-2]+\delta_{q,i}\sum_{w\leq m-n}\limits e^{(1)}_{p,j}[-2]e^{(1)}_{w,w}\nonumber\\
&=-\delta_{q,i}\sum_{w\leq m-n}\limits W^{(1)}_{w,j}W^{(2)}_{p,w}-\delta_{q,i}\alpha_2\sum_{w\leq m-n}\limits e^{(1)}_{w,j}e^{(1)}_{p,w}[-2]\nonumber\\
&\quad-\delta_{q,i}\delta_{p,j}\sum_{x,w\leq m-n}\limits\partial(W^{(1)}_{w,x})_{(-1)}W^{(1)}_{x,w}\nonumber\\
&\quad+\delta_{q,i}(m-n)\sum_{w\leq m-n}\limits e^{(1)}_{w,j}e^{(1)}_{p,w}[-2]+\delta_{q,i}\sum_{w\leq m-n}\limits e^{(1)}_{p,j}[-2]e^{(1)}_{w,w}.\label{1-3-1}
\end{align}
Since
\begin{align*}
&\quad\eqref{1-2}_7+\eqref{1-5}_{15}+\eqref{1-3-1}_5\\
&=\delta_{q,i}\delta(j>m-n)\sum_{\substack{w\leq m-n}}\limits e_{p,j}^{(2)}[-2]e^{(1)}_{w,w}\\
&\quad-\delta_{i,q}\delta(j\leq m-n)\sum_{w\leq m-n}\limits e_{p,j}^{(1)}[-2]e^{(1)}_{w,w}+\delta_{q,i}\sum_{w\leq m-n}\limits e^{(1)}_{p,j}[-2]e^{(1)}_{w,w}\\
&=\delta_{q,i}\delta(j>m-n)\sum_{\substack{w\leq m-n}}\limits (\partial W^{(1)}_{p,j})W^{(1)}_{w,w},\\
&\quad\eqref{1-3}_1+\eqref{1-3}_3\\
&=-\delta_{q,i}\alpha_2\sum_{x>m-n}\limits e_{p,x}^{(2)}e^{(1)}_{x,j}[-2]-\delta_{q,i}\alpha_2\sum_{x>m-n}\limits e_{p,x}^{(2)}[-2]e^{(1)}_{x,j}[-1]\\
&=-\delta_{q,i}\alpha_2\partial (e_{p,x}^{(2)}e^{(1)}_{x,j}),\\
&\quad\eqref{1-5}_{12}+\eqref{1-6}_3+\eqref{1-6}_5+\eqref{1-3-1}_2+\eqref{1-3-1}_4\\
&=\delta_{q,i}\alpha_1\sum_{x\leq m-n}\limits e_{p,x}^{(1)}[-2]e_{x,j}^{(1)}\\
&\quad+\delta_{q,i}\alpha_2\sum_{x\leq m-n}\limits e_{p,x}^{(1)}e^{(1)}_{x,j}[-2]+\delta_{q,i}\alpha_2\sum_{x\leq m-n}\limits e_{p,x}^{(1)}[-2]e^{(1)}_{x,j}\\
&\quad-\delta_{q,i}\alpha_2\sum_{w\leq m-n}\limits e^{(1)}_{w,j}e^{(1)}_{p,w}[-2]+\delta_{q,i}(m-n)\sum_{w\leq m-n}\limits e_{w,j}^{(1)}e^{(1)}_{p,w}[-2]\\
&=\delta_{q,i}\alpha_2\partial(\sum_{w\leq m-n}\limits e_{w,j}^{(1)}e^{(1)}_{p,w})+\delta_{q,i}(\alpha_1+2\alpha_2)\sum_{x\leq m-n}\limits [e_{p,x}^{(1)},e_{x,j}^{(1)}][-3]
\end{align*}
hold, we have
\begin{align}
&\quad\eqref{1-2}_7+\eqref{1-3}_1+\eqref{1-3}_3+\eqref{1-5}_9+\eqref{1-6}_3+\eqref{1-6}_5+\eqref{1-3-1}_2+\eqref{1-3-1}_4+\eqref{1-3-1}_5\nonumber\\
&=\delta_{q,i}\delta(j>m-n)\sum_{w\leq m-n}\limits(\partial W^{(1)}_{p,j})W^{(1)}_{w,w}-\delta_{q,i}\alpha_2(\partial W^{(2)}_{p,j}+2\alpha_2e^{(1)}_{p,j}[-3])\nonumber\\
&\quad+\delta_{q,i}(\alpha_1+2\alpha_2)(m-n)e^{(1)}_{p,j}[-3]-\dfrac{1}{2}\delta_{q,i}\delta_{p,j}(\alpha_1+2\alpha_2)\sum_{x\leq m-n}\limits\partial^2 W^{(1)}_{x,x}\nonumber\\
&=\delta_{q,i}\delta(j>m-n)\sum_{w\leq m-n}\limits(\partial W^{(1)}_{p,j})W^{(1)}_{w,w}\nonumber\\
&\quad-\delta_{q,i}\alpha_2\partial W^{(2)}_{p,j}-2\delta_{q,i}\alpha_2^2e^{(1)}_{p,j}[-3]\nonumber\\
&\quad+(\alpha_1+2\alpha_2)\delta_{q,i}(m-n)e^{(1)}_{p,j}[-3]-\dfrac{1}{2}\delta_{q,i}\delta_{p,j}(\alpha_1+2\alpha_2)\sum_{x\leq m-n}\limits\partial^2 W^{(1)}_{x,x}.\label{1-1-9}
\end{align}
Since
\begin{align*}
&\quad\eqref{1-2}_9+\eqref{1-3}_5\\
&=-\delta_{q,i}\delta(j>m-n)(\alpha_1^2+\alpha_1\alpha_2)e^{(2)}_{p,j}[-3],\\
&\quad\eqref{1-1-9}_3+\eqref{1-1-9}_4+\eqref{1-5}_{19}+\eqref{1-6}_7\\
&=-\delta_{q,i}\delta(j>m-n)(\alpha_1^2+\alpha_1\alpha_2)e^{(1)}_{p,j}[-3],\\
&\quad\eqref{1-1-1}_5+\eqref{1-1-1}_9+\eqref{1-1-2}_5\\
&=-\dfrac{1}{2}\delta_{i,q}\delta(j>m-n)\partial^2W^{(1)}_{p,j}
\end{align*}
hold, we have
\begin{align*}
&\quad\eqref{1-2}_9+\eqref{1-3}_5+\eqref{1-5}_{19}+\eqref{1-6}_7+\eqref{1-1-1}_5+\eqref{1-1-1}_9+\eqref{1-1-2}_5+\eqref{1-1-9}_3+\eqref{1-1-9}_4\\
&=-\dfrac{1}{2}\delta_{q,i}\delta(j>m-n)(\alpha_1^2+\alpha_1\alpha_2)\partial^2W^{(1)}_{p,j}-\dfrac{1}{2}\delta_{i,q}\delta(j>m-n)\partial^2W^{(1)}_{p,j}.
\end{align*}
Next, we compute the sum of terms containing $\delta_{p,j}$
By a direct computation, we obtain
\begin{align}
&\quad\eqref{1-4}_1+\eqref{1-5}_2\nonumber\\
&=\delta_{p,j}\sum_{x\leq m-n}\limits e^{(1)}_{x,q}(\sum_{w>m-n}\limits e^{(1)}_{w,x}e^{(2)}_{i,w}-\sum_{w\leq m-n}\limits e^{(1)}_{w,x}e^{(1)}_{i,w})\nonumber\\
&=\delta_{p,j}\sum_{x\leq m-n}\limits (W^{(1)}_{x,q})_{(-1)}W^{(2)}_{i,x}+\delta_{p,j}\alpha_2\sum_{x\leq m-n}\limits e^{(1)}_{x,q}e^{(1)}_{i,x}.\label{1-4-1}
\end{align}
Since
\begin{align*}
\eqref{1-1-2}_3&=\delta_{p,j}\delta(q>m-n)\sum_{x\leq m-n}\limits (\partial W^{(1)}_{x,x})_{(-1)}W^{(1)}_{i,q},\\
\eqref{1-5}_3+\eqref{1-5}_5+\eqref{1-6}_2&=0,\\
\eqref{1-6}_1+\eqref{1-4-1}_2&=0
\end{align*}
hold, we have
\begin{align*}
&\quad\eqref{1-5}_3+\eqref{1-5}_5+\eqref{1-6}_1+\eqref{1-6}_2+\eqref{1-1-2}_3+\eqref{1-4-1}_2\\
&=\delta_{p,j}\delta(q>m-n)\sum_{x\leq m-n}\limits (\partial W^{(1)}_{x,x})_{(-1)}W^{(1)}_{i,q}.
\end{align*}
Next, let us compute the sum of terms containing $\delta_{i,j}$.
Since
\begin{align*}
&\quad\eqref{1-2}_6+\eqref{1-1-1}_2+\eqref{1-1-1}_{11}\\
&=-\delta_{j,i}\delta(q\leq m-n)\sum_{\substack{x>m-n}}\limits e_{p,x}^{(2)}[-2]e^{(1)}_{x,q}\\
&\quad-\delta_{i,j}\delta(q>m-n)\sum_{x>m-n}\limits e_{p,x}^{(2)}[-2]e^{(1)}_{x,q}-\delta_{i,j}\sum_{w>m-n}\limits e^{(1)}_{w,q}[-2]e_{p,w}^{(2)}\\
&=-\delta_{i,j}\partial(\sum_{w>m-n}\limits e^{(1)}_{w,q}e_{p,w}^{(2)}),\\
&\quad\eqref{1-5}_4+\eqref{1-5}_{13}+\eqref{1-1-2}_4\\
&=\delta_{i,j}\sum_{x\leq m-n}\limits e_{x,q}^{(1)}[-2]e^{(1)}_{p,x}+\delta_{i,j}\delta(q\leq m-n)\sum_{x\leq m-n}\limits e_{p,x}^{(1)}[-2]e^{(1)}_{x,q}\\
&\quad+\delta_{i,j}\delta(q>m-n)\sum_{x\leq m-n}\limits e_{p,x}^{(1)}[-2]e^{(1)}_{x,q}\\
&=\delta_{i,j}\partial(\sum_{x\leq m-n}\limits e_{x,q}^{(1)}e^{(1)}_{p,x})+\delta_{i,j}\sum_{x\leq m-n}\limits [e_{p,x}^{(1)},e^{(1)}_{x,q}][-3]
\end{align*}
hold, we have
\begin{align}
&\quad\eqref{1-2}_6+\eqref{1-5}_4+\eqref{1-5}_{13}+\eqref{1-1-1}_2+\eqref{1-1-1}_{11}+\eqref{1-1-2}_4\nonumber\\
&=-\delta_{i,j}(\partial W^{(2)}_{p,q}+2\alpha_2e^{(1)}_{p,q}[-3])+\delta_{i,j}(m-n)e^{(1)}_{p,q}[-3]-\delta_{i,j}\delta_{p,q}\sum_{x\leq m-n}\limits \partial^2W^{(1)}_{x,x}\nonumber\\
&=-\delta_{i,j}\partial W^{(2)}_{p,q}-2\delta_{i,j}\alpha_2e^{(1)}_{p,q}[-3]+\delta_{i,j}(m-n)e^{(1)}_{p,q}[-3]-\delta_{i,j}\delta_{p,q}\sum_{x\leq m-n}\limits \partial^2W^{(1)}_{x,x}.\label{1-1-6}
\end{align}
Since
\begin{align*}
&\quad\eqref{1-5}_{17}+\eqref{1-6}_8\\
&=\delta_{i,j}\alpha_1\delta(q\leq m-n) e_{p,q}^{(1)}[-3]+2\delta_{i,j}\alpha_2\delta(q\leq m-n)e^{(1)}_{p,q}[-3],\\
&=\dfrac{1}{2}\delta_{i,j}(\alpha_1+2\alpha_2)\delta(q\leq m-n)\partial^2e^{(1)}_{p,q},\\
&\quad\eqref{1-3}_6+\eqref{1-1-1}_4+\eqref{1-1-5}_2+\eqref{1-1-6}_2+\eqref{1-1-6}_3\\
&=-2\delta_{i,j}\alpha_2\delta(q> m-n)e_{p,q}^{(2)}[-3]-\delta_{i,j}\delta(q>m-n)\alpha_1e^{(2)}_{p,q}[-3]\\
&\quad-\delta_{i,j}\alpha_2e^{(1)}_{p,q}[-3]-2\delta_{i,j}\alpha_2e^{(1)}_{p,q}[-3]+\delta_{i,j}(m-n)e^{(1)}_{p,q}[-3]\\
&=-\dfrac{1}{2}\delta_{i,j}(\alpha_1+2\alpha_2)\partial^2e^{(1)}_{p,q}
\end{align*}
hold, we have
\begin{align*}
&\quad\eqref{1-3}_6+\eqref{1-5}_{17}+\eqref{1-6}_8+\eqref{1-1-1}_4+\eqref{1-1-5}_2+\eqref{1-1-6}_2+\eqref{1-1-6}_3\\
&=-\dfrac{1}{2}\delta_{i,j}\delta(q>m-n)\alpha_2\partial^2W^{(1)}_{p,q}.
\end{align*}
Finally, let us compute the sum of terms containing $\delta_{p,q}$. By a direct computation, we obtain
\begin{align*}
&\quad\eqref{1-1-1}_7+\eqref{1-1-2}_6+\eqref{1-1-4}_4+\eqref{1-1-4}_6\\
&=\delta_{p,q}\delta(q>m-n)\sum_{x>m-n}\limits e^{(1)}_{x,j}[-2]e^{(2)}_{i,x}-\delta_{p,q}\delta(q>m-n)\sum_{\substack{x\leq m-n}}\limits e^{(1)}_{x,j}[-2]e_{i,x}^{(1)}\\
&\quad+\delta_{p,q}\sum_{\substack{x\leq m-n}}\limits e^{(1)}_{x,j}[-2]e_{i,x}^{(1)}-\delta_{p,q}\sum_{x>m-n}\limits e^{(1)}_{x,j}[-2]e^{(2)}_{i,x}\\
&=0.
\end{align*}
By a direct computation, we obtain
\begin{align*}
&\quad\eqref{1-1-1}_8+\eqref{1-1-5}_1\\
&=-\delta_{p,q}\alpha_2e^{(1)}_{i,j}[-3]+\delta_{p,q}\alpha_2e^{(1)}_{i,j}[-3]=0
\end{align*}
We complete the proof of \eqref{OPE3-1}.
\end{proof}
\section{the proof of \eqref{OPE3-2}}
This section is devoted to the proof of \eqref{OPE3-2}. We use the same notation as appendix A.
By the definition of $W^{(2)}_{i,j}$, we have
\begin{align*}
&\quad(W^{(2)}_{p,q})_{(1)}W^{(2)}_{i,j}\\
&=-(\alpha_2e^{(1)}_{p,q}[-2])_{(1)}(\sum_{w>m-n}\limits e_{w,j}^{(1)}e_{i,w}^{(2)}-\sum_{w\leq m-n}\limits e_{w,j}^{(1)}e_{i,w}^{(1)}-\alpha_2e^{(1)}_{i,j}[-2])\\
&\quad+(\sum_{x>m-n}\limits e_{x,q}^{(1)}e_{p,x}^{(2)})_{(1)}(\sum_{w>m-n}\limits e_{w,j}^{(1)}e_{i,w}^{(2)}-\sum_{w\leq m-n}\limits e_{w,j}^{(1)}e_{i,w}^{(1)}-\alpha_2e^{(1)}_{i,j}[-2])\\
&\quad-(\sum_{x\leq m-n}\limits e_{x,q}^{(1)}e_{p,x}^{(1)})_{(1)}(\sum_{w>m-n}\limits e_{w,j}^{(1)}e_{i,w}^{(2)}-\sum_{w\leq m-n}\limits e_{w,j}^{(1)}e_{i,w}^{(1)}-\alpha_2e^{(1)}_{i,j}[-2]).
\end{align*}
By a direct computation, we obtain
\begin{align}
&\quad-(\alpha_2e^{(1)}_{p,q}[-2])_{(1)}(\sum_{w>m-n}\limits e_{w,j}^{(1)}e_{i,w}^{(2)}-\sum_{w\leq m-n}\limits e_{w,j}^{(1)}e_{i,w}^{(1)}-\alpha_2e^{(1)}_{i,j}[-2])\nonumber\\
&=\alpha_2\sum_{w>m-n}\limits [e^{(1)}_{p,q},e_{w,j}^{(1)}]e_{i,w}^{(2)}\nonumber\\
&\quad-\alpha_2\sum_{w\leq m-n}\limits [e^{(1)}_{p,q},e_{w,j}^{(1)}]e_{i,w}^{(1)}-\alpha_2\sum_{w\leq m-n}\limits e_{w,j}^{(1)}[e^{(1)}_{p,q},e_{i,w}^{(1)}]\nonumber\\
&\quad-\alpha_2^2[e^{(1)}_{p,q},e^{(1)}_{i,j}][-2]\nonumber\\
&=\alpha_2\sum_{w\leq m-n}\limits (\delta_{q,w}e^{(1)}_{p,j}-\delta_{p,j}e^{(1)}_{w,q})e_{i,w}^{(2)}\nonumber\\
&\quad-\alpha_2\sum_{w\leq m-n}\limits (\delta_{q,w}e^{(1)}_{p,j}-\delta_{p,j}e^{(1)}_{w,q})e_{i,w}^{(1)}-\alpha_2\sum_{w\leq m-n}\limits e_{w,j}^{(1)}(\delta_{q,i}e^{(1)}_{p,w}-\delta_{p,w}e^{(1)}_{i,q})\nonumber\\
&\quad-\alpha_2^2(\delta_{q,i}e^{(1)}_{p,j}-\delta_{p,j}e^{(1)}_{i,q})[-2]\nonumber\\
&=\delta(q>m-n)\alpha_2e^{(1)}_{p,j}e_{i,q}^{(2)}-\delta_{p,j}\alpha_2\sum_{w\leq m-n}\limits e^{(1)}_{w,q}e_{i,w}^{(2)}\nonumber\\
&\quad-\delta(q\leq m-n)\alpha_2e^{(1)}_{p,j}e_{i,q}^{(1)}+\delta_{p,j}\alpha_2\sum_{w\leq m-n}\limits e^{(1)}_{w,q}e_{i,w}^{(1)}\nonumber\\
&\quad-\delta_{q,i}\alpha_2\sum_{w\leq m-n}\limits e_{w,j}^{(1)}e^{(1)}_{p,w}+0\nonumber\\
&\quad-\delta_{q,i}\alpha_2^2e^{(1)}_{p,j}[-2]+\delta_{p,j}\alpha_2^2e^{(1)}_{i,q}[-2].\label{2-1}
\end{align}
By a direct computation, we obtain
\begin{align}
&\quad(\sum_{x>m-n}\limits e_{x,q}^{(1)}e_{p,x}^{(2)})_{(1)}\sum_{w>m-n}\limits e_{w,j}^{(1)}e_{i,w}^{(2)}\nonumber\\
&=\sum_{w,x>m-n}\limits [e_{x,q}^{(1)},e_{w,j}^{(1)}][e_{p,x}^{(2)},e_{i,w}^{(2)}]\nonumber\\
&\quad+\sum_{x,w>m-n}\limits e_{x,q}^{(1)}e_{w,j}^{(1)}(e_{p,x}^{(2)},e_{i,w}^{(2)})+\sum_{x,w>m-n}\limits e_{p,x}^{(2)}(e_{x,q}^{(1)},e_{w,j}^{(1)})e_{i,w}^{(2)}\nonumber\\
&=\sum_{w,x>m-n}\limits (\delta_{q,w}e^{(1)}_{x,j}-\delta_{x,j}e^{(1)}_{w,q})(\delta_{x,i}e^{(2)}_{p,w}-\delta_{p,w}e^{(2)}_{i,x})\nonumber\\
&\quad+\sum_{x,w>m-n}\limits e_{x,q}^{(1)}e_{w,j}^{(1)}(\alpha_2\delta_{x,i}\delta_{w,p}+\delta_{p,x}\delta_{i,w})+\sum_{x,w>m-n}\limits e_{p,x}^{(2)}(\delta_{q,w}\delta_{x,j}\alpha_1+\delta_{x,q}\delta_{w,j})e_{i,w}^{(2)}\nonumber\\
&=\delta(q>m-n)e^{(1)}_{i,j}e^{(2)}_{p,q}-\delta_{q,p}\sum_{x>m-n}\limits e^{(1)}_{x,j}e^{(2)}_{i,x}\nonumber\\
&\quad-\delta_{i,j}\sum_{w>m-n}\limits e^{(1)}_{w,q}e^{(2)}_{p,w}+\delta(j>m-n)e^{(1)}_{p,q}e^{(2)}_{i,j}\nonumber\\
&\quad+\alpha_2e_{i,q}^{(1)}e_{p,j}^{(1)}+ e_{p,q}^{(1)}e_{i,j}^{(1)}\nonumber\\
&\quad+\delta(q,j>m-n)\alpha_1 e_{p,j}^{(2)}e_{i,q}^{(2)}+\delta(q,j>m-n) e_{p,q}^{(2)}e_{i,j}^{(2)}.\label{2-2}
\end{align}
By a direct computation, we obtain
\begin{align}
&\quad-(\sum_{x>m-n}\limits e_{x,q}^{(1)}e_{p,x}^{(2)})_{(1)}\sum_{w\leq m-n}\limits e_{w,j}^{(1)}e_{i,w}^{(1)}\nonumber\\
&=-\sum_{\substack{x>m-n\\w\leq m-n}}\limits (e_{x,q}^{(1)},e_{w,j}^{(1)})e_{i,w}^{(1)}e_{p,x}^{(2)}-\sum_{\substack{x>m-n\\w\leq m-n}}\limits e_{w,j}^{(1)}(e_{x,q}^{(1)},e_{i,w}^{(1)})e_{p,x}^{(2)}\nonumber\\
&\quad-\sum_{\substack{x>m-n\\w\leq m-n}}\limits [[e_{x,q}^{(1)},e_{w,j}^{(1)}],e_{i,w}^{(1)}]e_{p,x}^{(2)}\nonumber\\
&\quad-\sum_{\substack{x>m-n\\w\leq m-n}}\limits e_{p,x}^{(2)}[-2]([e_{x,q}^{(1)},e_{w,j}^{(1)}],e_{i,w}^{(1)})\nonumber\\
&=-\sum_{\substack{x>m-n\\w\leq m-n}}\limits(\alpha_1\delta_{x,j}\delta_{w,q}+\delta_{x,q}\delta_{w,j})e_{i,w}^{(1)}e_{p,x}^{(2)}-\sum_{\substack{x>m-n\\w\leq m-n}}\limits e_{w,j}^{(1)}(\delta_{x,w}\delta_{q,i}\alpha_1+\delta_{x,q}\delta_{i,w})e_{p,x}^{(2)}\nonumber\\
&\quad-\sum_{\substack{x>m-n\\w\leq m-n}}\limits [\delta_{q,w}e^{(1)}_{x,j}-\delta_{j,x}e^{(1)}_{w,q},e^{(1)}_{i,w}]e_{p,x}^{(2)}\nonumber\\
&\quad-\sum_{\substack{x>m-n\\w\leq m-n}}\limits e_{p,x}^{(2)}[-2](\delta_{q,w}e^{(1)}_{x,j}-\delta_{x,j}e^{(1)}_{w,q},e_{i,w}^{(1)})\nonumber\\
&=-\sum_{\substack{x>m-n\\w\leq m-n}}\limits(\alpha_1\delta_{x,j}\delta_{w,q}+\delta_{x,q}\delta_{w,j})e_{i,w}^{(1)}e_{p,x}^{(2)}-\sum_{\substack{x>m-n\\w\leq m-n}}\limits e_{w,j}^{(1)}(\delta_{x,w}\delta_{q,i}\alpha_1+\delta_{x,q}\delta_{i,w})e_{p,x}^{(2)}\nonumber\\
&\quad-\sum_{\substack{x>m-n\\w\leq m-n}}\limits\delta_{q,w}(\delta_{i,j}e^{(1)}_{x,w}-\delta_{x,w}e^{(1)}_{i,j})e_{p,x}^{(2)}+\sum_{\substack{x>m-n\\w\leq m-n}}\limits\delta_{j,x}(\delta_{i,q}e^{(1)}_{w,w}-e^{(1)}_{i,q})e_{p,x}^{(2)}\nonumber\\
&\quad-\sum_{\substack{x>m-n\\w\leq m-n}}\limits e_{p,x}^{(2)}[-2]\delta_{q,w}(\alpha_1\delta_{x,w}\delta_{i,j}+\delta_{x,j}\delta_{i,w})+\sum_{\substack{x>m-n\\w\leq m-n}}\limits \delta_{x,j}e_{p,x}^{(2)}[-2](\alpha_1\delta_{i,q}+\delta_{i,w}\delta_{w,q})\nonumber\\
&=-\delta(q\leq m-n)\delta(j>m-n)\alpha_1e_{i,q}^{(1)}e_{p,j}^{(2)}-\delta(j\leq m-n)\delta(q>m-n)e_{i,j}^{(1)}e_{p,q}^{(2)}\nonumber\\
&\quad-0-0\nonumber\\
&\quad-\delta(q\leq m-n)\sum_{\substack{x>m-n}}\limits \delta_{j,i}e^{(1)}_{x,q}e_{p,x}^{(2)}-0\nonumber\\
&\quad+\delta(j>m-n)\sum_{\substack{w\leq m-n}}\limits\delta_{q,i}e^{(1)}_{w,w}e_{p,j}^{(2)}-\delta(j>m-n)(m-n)e^{(1)}_{i,q}e_{p,j}^{(2)}\nonumber\\
&\quad+0+0+\delta_{i,q}\delta(j>m-n)\alpha_1(m-n)e_{p,j}^{(2)}[-2]+0.\label{2-3}
\end{align}
By a direct computation, we obtain
\begin{align}
&\quad-(\sum_{x>m-n}\limits e_{x,q}^{(1)}e_{p,x}^{(2)})_{(1)}\alpha_2e^{(1)}_{i,j}[-2])\nonumber\\
&=-\alpha_2\sum_{x>m-n}\limits e_{p,x}^{(2)}[-1][e_{x,q}^{(1)},e^{(1)}_{i,j}]-2\alpha_2\sum_{x>m-n}\limits e_{p,x}^{(2)}[-2](e_{x,q}^{(1)},e^{(1)}_{i,j})\nonumber\\
&=-\alpha_2\sum_{x>m-n}\limits e_{p,x}^{(2)}[-1](\delta_{q,i}e^{(1)}_{x,j}-\delta_{x,j}e^{(1)}_{i,q})-2\alpha_2\sum_{x>m-n}\limits e_{p,x}^{(2)}[-2](\delta_{x,j}\delta_{q,i}\alpha_1+\delta_{x,q}\delta_{i,j})\nonumber\\
&=-\delta_{q,i}\alpha_2\sum_{x>m-n}\limits e_{p,x}^{(2)}e^{(1)}_{x,j}+\delta(j>m-n)\alpha_2e_{p,j}^{(2)}[-1]e^{(1)}_{i,q}\nonumber\\
&\quad-2\delta_{q,i}\delta(j>m-n)\alpha_1\alpha_2 e_{p,j}^{(2)}[-2]-2\delta_{i,j}\delta(q>m-n)\alpha_2e_{p,q}^{(2)}[-2].\label{2-4}
\end{align}
By a direct computation, we obtain
\begin{align}
&\quad-(\sum_{x\leq m-n}\limits e_{x,q}^{(1)}e_{p,x}^{(1)})_{(1)}\sum_{w>m-n}\limits e_{w,j}^{(1)}e_{i,w}^{(2)}\nonumber\\
&=-\sum_{\substack{x\leq m-n\\w>m-n}}\limits [e_{p,x}^{(1)},[e_{x,q}^{(1)},e_{w,j}^{(1)}]]e_{i,w}^{(2)}\nonumber\\
&\quad-\sum_{\substack{x\leq m-n\\w>m-n}}\limits e_{x,q}^{(1)}(e_{p,x}^{(1)},e_{w,j}^{(1)})e_{i,w}^{(2)}-\sum_{\substack{x\leq m-n\\w>m-n}}\limits e_{p,x}^{(1)}(e_{x,q}^{(1)},e_{w,j}^{(1)})e_{i,w}^{(2)}\nonumber\\
&=-\sum_{\substack{x\leq m-n\\w>m-n}}\limits [e_{p,x}^{(1)},\delta_{q,w}e^{(1)}_{x,j}-\delta_{x,j}e^{(1)}_{w,q}]e_{i,w}^{(2)}\nonumber\\
&\quad-\sum_{\substack{x\leq m-n\\w>m-n}}\limits e_{x,q}^{(1)}(e_{p,x}^{(1)},e_{w,j}^{(1)})e_{i,w}^{(2)}-\sum_{\substack{x\leq m-n\\w>m-n}}\limits e_{p,x}^{(1)}(e_{x,q}^{(1)},e_{w,j}^{(1)})e_{i,w}^{(2)}\nonumber\\
&=-\sum_{\substack{x\leq m-n\\w>m-n}}\limits\delta_{q,w}(e^{(1)}_{p,j}-\delta_{p,j}e^{(1)}_{x,x})e_{i,w}^{(2)}+\sum_{\substack{x\leq m-n\\w>m-n}}\limits\delta_{x,j}(\delta_{x,w}e^{(1)}_{p,q}-\delta_{p,q}e^{(1)}_{w,x})e_{i,w}^{(2)}\nonumber\\
&\quad-\sum_{\substack{x\leq m-n\\w>m-n}}\limits e_{x,q}^{(1)}(\delta_{p,j}\delta_{x,w}\alpha_1+\delta_{p,x}\delta_{w,j})e_{i,w}^{(2)}-\sum_{\substack{x\leq m-n\\w>m-n}}\limits e_{p,x}^{(1)}(\delta_{q,w}\delta_{x,j}\alpha_1+\delta_{q,x}\delta_{w,j})e_{i,w}^{(2)}\nonumber\\
&=-\delta(q>m-n)(m-n)e^{(1)}_{p,j}e_{i,q}^{(2)}+\delta_{p,j}\delta(q>m-n)\sum_{\substack{x\leq m-n}}\limits e^{(1)}_{x,x}e_{i,q}^{(2)}\nonumber\\
&\quad+0-\delta_{p,q}\delta(j\leq m-n)\sum_{\substack{w>m-n}}\limits e^{(1)}_{w,j}e_{i,w}^{(2)}\nonumber\\
&\quad-0-0-\delta(q>m-n,j\leq m-n)\alpha_1e_{p,j}^{(1)}e_{i,q}^{(2)}-\delta(j>m-n,q\leq m-n)e_{p,q}^{(1)}e_{i,j}^{(2)}.\label{2-5}
\end{align}
By a direct computation, we obtain
\begin{align}
&\quad(\sum_{x\leq m-n}\limits e_{x,q}^{(1)}e_{p,x}^{(1)})_{(1)}\sum_{w\leq m-n}\limits e_{w,j}^{(1)}e_{i,w}^{(1)}\nonumber\\
&=\sum_{x,w\leq m-n}\limits [e_{p,x}^{(1)},[e_{x,q}^{(1)},e_{w,j}^{(1)}]]e_{i,w}^{(1)}+\sum_{x,w\leq m-n}\limits e_{w,j}^{(1)}[e_{p,x}^{(1)},[e_{x,q}^{(1)},e_{i,w}^{(1)}]]\nonumber\\
&\quad+\sum_{x,w\leq m-n}\limits [e_{p,x}^{(1)},e_{w,j}^{(1)}][e_{x,q}^{(1)},e_{i,w}^{(1)}]+\sum_{x,w\leq m-n}\limits [e_{x,q}^{(1)},e_{w,j}^{(1)}][e_{p,x}^{(1)},e_{i,w}^{(1)}]\nonumber\\
&\quad+\sum_{x,w\leq m-n}\limits e_{x,q}^{(1)}(e_{p,x}^{(1)},e_{w,j}^{(1)})e_{i,w}^{(1)}+\sum_{x,w\leq m-n}\limits e_{x,q}^{(1)}e_{w,j}^{(1)}(e_{p,x}^{(1)},e_{i,w}^{(1)})\nonumber\\
&\quad+\sum_{x,w\leq m-n}\limits e_{x,q}^{(1)}[[e_{p,x}^{(1)},e_{w,j}^{(1)}],e_{i,w}^{(1)}]\nonumber\\
&\quad+\sum_{x,w\leq m-n}\limits e_{p,x}^{(1)}(e_{x,q}^{(1)},e_{w,j}^{(1)})e_{i,w}^{(1)}+\sum_{x,w\leq m-n}\limits e_{p,x}^{(1)}e_{w,j}^{(1)}(e_{x,q}^{(1)},e_{i,w}^{(1)})\nonumber\\
&\quad+\sum_{x,w\leq m-n}\limits e_{p,x}^{(1)}[[e_{x,q}^{(1)},e_{w,j}^{(1)}],e_{i,w}^{(1)}]\nonumber\\
&\quad+\sum_{\substack{x\leq m-n\\w\leq m-n}}\limits e_{p,x}^{(1)}[-2]([e_{x,q}^{(1)},e_{w,j}^{(1)}],e_{i,w}^{(1)})\nonumber\\
&\quad+\sum_{\substack{x\leq m-n\\w\leq m-n}}\limits e_{x,q}^{(1)}[-2]([e_{p,x}^{(1)},e_{w,j}^{(1)}],e_{i,w}^{(1)})\nonumber\\
&=\sum_{x,w\leq m-n}\limits [e_{p,x}^{(1)},\delta_{q,w}e^{(1)}_{x,j}-\delta_{x,j}e^{(1)}_{w,q}]e_{i,w}^{(1)}+\sum_{x,w\leq m-n}\limits e_{w,j}^{(1)}[e_{p,x}^{(1)},\delta_{q,i}e^{(1)}_{x,w}-\delta_{x,w}e^{(1)}_{i,q}]\nonumber\\
&\quad+\sum_{x,w\leq m-n}\limits (\delta_{x,w}e^{(1)}_{p,j}-\delta_{p,j}e^{(1)}_{w,x})(\delta_{q,i}e^{(1)}_{x,w}-\delta_{w,x}e^{(1)}_{i,q})\nonumber\\
&\quad++\sum_{x,w\leq m-n}\limits (\delta_{w,q}e^{(1)}_{x,j}-\delta_{x,j}e^{(1)}_{w,q})(\delta_{x,i}e^{(1)}_{p,w}-\delta_{p,w}e^{(1)}_{i,x})\nonumber\\
&\quad+\sum_{x,w\leq m-n}\limits e_{x,q}^{(1)}(e_{p,x}^{(1)},e_{w,j}^{(1)})e_{i,w}^{(1)}+\sum_{x,w\leq m-n}\limits e_{x,q}^{(1)}e_{w,j}^{(1)}(e_{p,x}^{(1)},e_{i,w}^{(1)})\nonumber\\
&\quad+\sum_{x,w\leq m-n}\limits e_{x,q}^{(1)}[(\delta_{x,w}e^{(1)}_{p,j}-\delta_{p,j}e^{(1)}_{w,x}),e_{i,w}^{(1)}]\nonumber\\
&\quad+\sum_{x,w\leq m-n}\limits e_{p,x}^{(1)}(e_{x,q}^{(1)},e_{w,j}^{(1)})e_{i,w}^{(1)}+\sum_{x,w\leq m-n}\limits e_{p,x}^{(1)}e_{w,j}^{(1)}(e_{x,q}^{(1)},e_{i,w}^{(1)})\nonumber\\
&\quad+\sum_{x,w\leq m-n}\limits e_{p,x}^{(1)}[\delta_{q,w}e^{(1)}_{x,j}-\delta_{x,j}e^{(1)}_{w,q},e_{i,w}^{(1)}]\nonumber\\
&\quad+\sum_{\substack{x\leq m-n\\w\leq m-n}}\limits e_{p,x}^{(1)}[-2](\delta_{q,w}e^{(1)}_{x,j}-\delta_{x,j}e^{(1)}_{w,q},e_{i,w}^{(1)})\nonumber\\
&\quad+\sum_{\substack{x\leq m-n\\w\leq m-n}}\limits e_{x,q}^{(1)}[-2](\delta_{x,w}e^{(1)}_{p,j}-\delta_{p,j}e^{(1)}_{w,x},e_{i,w}^{(1)})\nonumber\\
&=\sum_{x,w\leq m-n}\limits\delta_{q,w}(e^{(1)}_{p,j}-\delta_{p,j}e^{(1)}_{x,x})e_{i,w}^{(1)}-\sum_{x,w\leq m-n}\limits\delta_{x,j}(\delta_{x,w}e^{(1)}_{p,q}-\delta_{p,q}e^{(1)}_{w,x})e_{i,w}^{(1)}\nonumber\\
&\quad+\sum_{x,w\leq m-n}\limits\delta_{q,i} e_{w,j}^{(1)}(e^{(1)}_{p,w}-\delta_{p,w}e^{(1)}_{x,x})-\sum_{x,w\leq m-n}\limits \delta_{x,w}e_{w,j}^{(1)}(\delta_{x,i}e^{(1)}_{p,q}-\delta_{p,q}e^{(1)}_{i,x})\nonumber\\
&\quad+\sum_{x,w\leq m-n}\limits \delta_{x,w}e^{(1)}_{p,j}(\delta_{q,i}e^{(1)}_{x,w}-\delta_{x,w}e_{i,q})-\sum_{x,w\leq m-n}\limits\delta_{p,j}e^{(1)}_{w,x}(\delta_{q,i}e^{(1)}_{x,w}-\delta_{x,w}e_{i,q})\nonumber\\
&\quad+\sum_{x,w\leq m-n}\limits \delta_{w,q}e^{(1)}_{x,j}(\delta_{x,i}e^{(1)}_{p,w}-\delta_{p,w}e^{(1)}_{i,x})-\sum_{x,w\leq m-n}\limits\delta_{x,j}e^{(1)}_{w,q}(\delta_{x,i}e^{(1)}_{p,w}-\delta_{p,w}e^{(1)}_{i,x})\nonumber\\
&\quad+\sum_{x,w\leq m-n}\limits e_{x,q}^{(1)}[-1](\delta_{x,w}\delta_{p,j}\alpha_1+\delta_{p,x}\delta_{w,j})e_{i,w}^{(1)}+\sum_{x,w\leq m-n}\limits e_{x,q}^{(1)}[-1]e_{w,j}^{(1)}(\delta_{x,i}\delta_{p,w}\alpha_1+\delta_{p,x}\delta_{i,w})\nonumber\\
&\quad+\sum_{x,w\leq m-n}\limits \delta_{x,w}e_{x,q}^{(1)}(\delta_{i,j}e^{(1)}_{p,w}-\delta_{p,w}e^{(1)}_{i,j})-\sum_{x,w\leq m-n}\limits \delta_{p,j}e_{x,q}^{(1)}(\delta_{x,i}e^{(1)}_{w,w}-e^{(1)}_{i,x})\nonumber\\
&\quad+\sum_{x,w\leq m-n}\limits e_{p,x}^{(1)}(\delta_{q,w}\delta_{x,j}\alpha_1+\delta_{x,q}\delta_{w,j})e_{i,w}^{(1)}+\sum_{x,w\leq m-n}\limits e_{p,x}^{(1)}e_{w,j}^{(1)}(\delta_{x,w}\delta_{i,q}\alpha_1+\delta_{x,q}\delta_{i,w})\nonumber\\
&\quad+\sum_{x,w\leq m-n}\limits \delta_{q,w}e_{p,x}^{(1)}(\delta_{j,i}e^{(1)}_{x,w}-\delta_{x,w}e^{(1)}_{i,j})-\sum_{x,w\leq m-n}\limits \delta_{x,j}e_{p,x}^{(1)}(\delta_{q,i}e^{(1)}_{w,w}-e^{(1)}_{i,q})\nonumber\\
&\quad+\sum_{\substack{x\leq m-n\\w\leq m-n}}\limits \delta_{q,w}e_{p,x}^{(1)}[-2](\alpha_1\delta_{i,j}\delta_{x,w}+\delta_{x,j}\delta_{i,w})\nonumber\\
&\quad-\sum_{\substack{x\leq m-n\\w\leq m-n}}\limits \delta_{x,j}e_{p,x}^{(1)}[-2](\delta_{i,q}\alpha_1+\delta_{w,q}\delta_{i,w})\nonumber\\
&\quad+\sum_{\substack{x\leq m-n\\w\leq m-n}}\limits \delta_{x,w}e_{x,q}^{(1)}[-2](\delta_{j,i}\delta_{p,w}\alpha_1+\delta_{p,j}\delta_{i,w})\nonumber\\
&\quad-\sum_{\substack{x\leq m-n\\w\leq m-n}}\limits \delta_{p,j}e_{x,q}^{(1)}[-2](\delta_{i,x}\alpha_1+\delta_{w,i}\delta_{w,x})\nonumber\\
&=\delta(q\leq m-n)(m-n)e^{(1)}_{p,j}e_{i,q}^{(1)}-\delta_{p,j}\delta(q\leq m-n)\sum_{x\leq m-n}\limits e^{(1)}_{x,x}e_{i,q}^{(1)}\nonumber\\
&\quad-\delta(j\leq m-n)e^{(1)}_{p,q}e_{i,j}^{(1)}+\delta_{p,q}\delta(j\leq m-n)\sum_{w\leq m-n}\limits e^{(1)}_{w,j}e_{i,w}^{(1)}\nonumber\\
&\quad+\delta_{q,i}(m-n)\sum_{w\leq m-n}\limits e_{w,j}^{(1)}e^{(1)}_{p,w}-0\nonumber\\
&\quad-0+\delta_{q,p}\sum_{x\leq m-n}\limits e^{(1)}_{x,j}e_{i,x}^{(1)}\nonumber\\
&\quad+\delta_{q,i}\sum_{x\leq m-n}\limits e^{(1)}_{p,j}e^{(1)}_{x,x}-(m-n)e^{(1)}_{p,j}e_{i,q}^{(1)}\nonumber\\
&\quad-\delta_{p,j}\delta_{q,i}\sum_{x,w\leq m-n}\limits e^{(1)}_{w,x}e^{(1)}_{x,w}+\delta_{p,j}\sum_{x\leq m-n}\limits e^{(1)}_{x,x}e_{i,q}^{(1)}+0+0\nonumber\\
&\quad-0-0+\delta_{p,j}\alpha_1\sum_{x\leq m-n}\limits e_{x,q}^{(1)}e_{i,x}^{(1)}+0\nonumber\\
&\quad+0+0+\delta_{i,j}\sum_{x\leq m-n}\limits e_{x,q}^{(1)}e^{(1)}_{p,x}-0\nonumber\\
&\quad-0+\delta_{p,j}(m-n)\sum_{x,w\leq m-n}\limits e_{x,q}^{(1)}e^{(1)}_{i,x}\nonumber\\
&\quad+\delta(q,j\leq m-n)\alpha_1e_{p,j}^{(1)}e_{i,q}^{(1)}+\delta(q,j\leq m-n)e_{p,q}^{(1)}e_{i,j}^{(1)}\nonumber\\
&\quad+\delta_{i,q}\alpha_1\sum_{x\leq m-n}\limits e_{p,x}^{(1)}e_{x,j}^{(1)}+0\nonumber\\
&\quad+\delta_{j,i}\delta(q\leq m-n)\sum_{x\leq m-n}\limits e_{p,x}^{(1)}e^{(1)}_{x,q}-\delta(q\leq m-n)e_{p,q}^{(1)}e^{(1)}_{i,j}\nonumber\\
&\quad-\delta_{q,i}\delta(j\leq m-n)\sum_{w\leq m-n}\limits e_{p,x}^{(1)}e^{(1)}_{w,w}+(m-n)\delta(j\leq m-n)e_{p,j}^{(1)}e^{(1)}_{i,q}\nonumber\\
&\quad+\delta_{i,j}\delta(q\leq m-n)\alpha_1e_{p,q}^{(1)}[-2]+0\nonumber\\
&\quad-\delta_{i,q}\delta(j\leq m-n)\alpha_1(m-n)e_{p,j}^{(1)}[-2]+0\nonumber\\
&\quad+0+0-0-0.
\label{2-6}
\end{align}
By a direct computation, we obtain
\begin{align}
&\quad(\sum_{x\leq m-n}\limits e_{x,q}^{(1)}e_{p,x}^{(1)})_{(1)}\alpha_2e^{(1)}_{i,j}[-2])\nonumber\\
&=\alpha_2\sum_{x\leq m-n}\limits [e_{p,x}^{(1)},[e_{x,q}^{(1)},e^{(1)}_{i,j}]][-2]\nonumber\\
&\quad+\alpha_2\sum_{x\leq m-n}\limits e_{x,q}^{(1)}[e_{p,x}^{(1)},e^{(1)}_{i,j}]+2\alpha_2\sum_{x\leq m-n}\limits e_{x,q}^{(1)}[-2](e_{p,x}^{(1)},e^{(1)}_{i,j})\nonumber\\
&\quad+\alpha_2\sum_{x\leq m-n}\limits e_{p,x}^{(1)}[e_{x,q}^{(1)},e^{(1)}_{i,j}]+2\alpha_2\sum_{x\leq m-n}\limits e_{p,x}^{(1)}[-2](e_{x,q}^{(1)},e^{(1)}_{i,j})\nonumber\\
&=\alpha_2\sum_{x\leq m-n}\limits [e_{p,x}^{(1)},\delta_{i,q}e^{(1)}_{x,j}-\delta_{j,x}e^{(1)}_{i,q}][-2]\nonumber\\
&\quad+\alpha_2\sum_{x\leq m-n}\limits e_{x,q}^{(1)}(\delta_{x,i}e^{(1)}_{p,j}-\delta_{p,j}e^{(1)}_{i,x})+2\alpha_2\sum_{x\leq m-n}\limits e_{x,q}^{(1)}[-2](\delta_{p,j}\delta_{i,x}\alpha_1+\delta_{p,x}\delta_{i,j})\nonumber\\
&\quad+\alpha_2\sum_{x\leq m-n}\limits e_{p,x}^{(1)}(\delta_{q,i}e^{(1)}_{x,j}-\delta_{x,j}e^{(1)}_{x,j})+2\alpha_2\sum_{x\leq m-n}\limits e_{p,x}^{(1)}[-2](\delta_{x,j}\delta_{q,i}\alpha_1+\delta_{q,x}\delta_{i,j})\nonumber\\
&=\alpha_2\sum_{x\leq m-n}\limits\delta_{i,q}(e^{(1)}_{p,j}-\delta_{p,j}e^{(1)}_{x,x})[-2]-\alpha_2\sum_{x\leq m-n}\limits\delta_{j,x}(\delta_{i,x}e^{(1)}_{p,q}-\delta_{p,q}e^{(1)}_{i,x})[-2]\nonumber\\
&\quad+\alpha_2\sum_{x\leq m-n}\limits e_{x,q}^{(1)}(\delta_{x,i}e^{(1)}_{p,j}-2\delta_{p,j}e^{(1)}_{i,x})+2\alpha_2\sum_{x\leq m-n}\limits e_{x,q}^{(1)}[-2](\delta_{p,j}\delta_{i,x}\alpha_1+\delta_{p,x}\delta_{i,j})\nonumber\\
&\quad+\alpha_2\sum_{x\leq m-n}\limits e_{p,x}^{(1)}(\delta_{q,i}e^{(1)}_{x,j}-\delta_{x,j}e^{(1)}_{i,q})+2\alpha_2\sum_{x\leq m-n}\limits e_{p,x}^{(1)}[-2](\delta_{x,j}\delta_{q,i}\alpha_1+\delta_{q,x}\delta_{i,j})\nonumber\\
&=\delta_{i,q}\alpha_2(m-n)e^{(1)}_{p,j}[-2]-\delta_{i,q}\delta_{p,j}\alpha_2\sum_{x\leq m-n}\limits e^{(1)}_{x,x}[-2]\nonumber\\
&\quad+0+\delta_{p,q}\delta(j\leq m-n)\alpha_2e^{(1)}_{i,j}[-2]\nonumber\\
&\quad-0-\delta_{p,j}\alpha_2\sum_{x\leq m-n}\limits e_{x,q}^{(1)}e^{(1)}_{i,x}\nonumber\\
&\quad-0-0+\delta_{q,i}\alpha_2\sum_{x\leq m-n}\limits e_{p,x}^{(1)}e^{(1)}_{x,j}-\delta(j\leq m-n)\alpha_2e_{p,j}^{(1)}e^{(1)}_{i,q}\nonumber\\
&\quad+2\delta(j\leq m-n)\alpha_2e_{p,j}^{(1)}[-2]\delta_{q,i}\alpha_1+2\delta_{i,j}\delta(q\leq m-n)\alpha_2 e_{p,q}^{(1)}[-2].\label{2-7}
\end{align}
First, we compute the sum of the terms containing $e_{p,j}^{(a)}e^{(b)}_{i,q}$ or $e^{(b)}_{i,q}e_{p,j}^{(a)}$.
Since
\begin{align*}
&\quad\eqref{2-3}_1+\eqref{2-3}_5+\eqref{2-4}_2\\
&=-\delta(q\leq m-n)\delta(j>m-n)\alpha_1e_{i,q}^{(1)}e_{p,j}^{(2)}[-1]-\delta(j>m-n)(m-n)e^{(1)}_{i,q}e_{p,j}^{(2)}\\
&\quad+\alpha_2\delta(j>m-n)e_{p,j}^{(2)}[-1]e^{(1)}_{i,q}\\
&=\delta(q,j>m-n)\alpha_1e^{(2)}_{p,j}e^{(1)}_{i,q},\\
&\quad\eqref{2-1}_1+\eqref{2-5}_1+\eqref{2-5}_4\\
&=\delta(q>m-n)\alpha_2e_{p,j}^{(1)}e^{(2)}_{i,q}-\delta(q>m-n)(m-n)e^{(1)}_{p,j}e_{i,q}^{(2)}\\
&\quad-\delta(q>m-n,j\leq m-n)e^{(1)}_{p,j}e^{(2)}_{i,q}\\
&=\delta(q,j>m-n)\alpha_1e^{(1)}_{p,j}e^{(2)}_{i,q},\\
&\quad\eqref{2-1}_3+\eqref{2-2}_5+\eqref{2-6}_1+\eqref{2-6}_8+\eqref{2-6}_{14}+\eqref{2-6}_{20}+\eqref{2-7}_6\\
&=-\delta(q\leq m-n)\alpha_2e^{(1)}_{p,j}e_{i,q}^{(1)}+\alpha_2e_{i,q}^{(1)}e_{p,j}^{(1)}\\
&\quad+\delta(q>m-n)(m-n)e^{(1)}_{p,j}e_{i,q}^{(1)}-(m-n)e^{(1)}_{p,j}e^{(1)}_{i,q}\\
&\quad+\delta(q,j\leq m-n)\alpha_1e_{p,j}^{(1)}[-1]e_{i,q}^{(1)}+\delta(j\leq m-n)(m-n)e^{(1)}_{p,j}e^{(1)}_{i,q}-\delta(j\leq m-n)\alpha_2e^{(1)}_{p,j}e^{(1)}_{i,q}\\
&=\delta(q,j>m-n)\alpha_1e^{(1)}_{p,j}e^{(1)}_{i,q}+\alpha_2[e_{i,q}^{(1)},e_{p,j}^{(1)}][-2]
\end{align*}
hold, we have
\begin{align}
&\quad\eqref{2-1}_1+\eqref{2-3}_1+\eqref{2-3}_5+\eqref{2-4}_2\nonumber\\
&\qquad+\eqref{2-1}_1+\eqref{2-5}_1+\eqref{2-5}_4+\eqref{2-1}_3+\eqref{2-2}_5+\eqref{2-6}_1+\eqref{2-6}_8+\eqref{2-6}_{14}+\eqref{2-6}_{20}+\eqref{2-7}_6\nonumber\\
&=\delta(q,j>m-n)\alpha_1e^{(2)}_{p,j}e^{(2)}_{i,q}+\delta(q,j>m-n)\alpha_1e^{(2)}_{p,j}e^{(1)}_{i,q}+\alpha_1\delta(q,j>m-n)e^{(1)}_{p,j}e^{(2)}_{i,q}\nonumber\\
&\quad+\delta(q,j>m-n)\alpha_1e^{(1)}_{p,j}e^{(1)}_{i,q}+\alpha_2(\delta_{p,q}e^{(1)}_{i,j}-\delta_{i,j}e^{(1)}_{i,j})[-2]\nonumber\\
&=\delta(q,j>m-n)\alpha_1W^{(1)}_{p,j}W^{(1)}_{i,q}+\delta_{p,q}\alpha_2e^{(1)}_{i,j}[-2]-\delta_{i,j}\alpha_2e^{(1)}_{i,j}[-2].\label{2-1-1}
\end{align}

Next, we compute the sum of the terms containing $e_{p,q}^{(a)}e^{(b)}_{i,j}$ or $e^{(b)}_{i,j}e_{p,q}^{(a)}$. Since
\begin{align*}
&\quad\eqref{2-2}_1+\eqref{2-3}_2\\
&=\delta(q>m-n)e^{(1)}_{i,j}e^{(2)}_{p,q}-\delta(j\leq m-n)\delta(q>m-n)e_{i,j}^{(1)}e_{p,q}^{(2)}[-1]\\
&=\delta(q,j>m-n)e^{(1)}_{i,j}e^{(2)}_{p,q},\\
&\quad\eqref{2-2}_4+\eqref{2-5}_5\\
&=\delta(j>m-n)e^{(1)}_{p,q}e^{(2)}_{i,j}-\delta(j>m-n,q\leq m-n)e^{(1)}_{p,q}e^{(2)}_{i,j}\\
&=\delta(q,j>m-n)e^{(1)}_{p,q}e^{(2)}_{i,j},\\
&\quad\eqref{2-2}_5+\eqref{2-6}_3+\eqref{2-6}_{15}+\eqref{2-6}_{18}\\
&=e_{p,q}^{(1)}e_{i,j}^{(1)}-\delta(j\leq m-n)e^{(1)}_{p,q}e_{i,j}^{(1)}\\
&\quad+\delta(q,j\leq m-n)e_{p,q}^{(1)}e_{i,j}^{(1)}-\delta(q\leq m-n)e_{p,q}^{(1)}e^{(1)}_{i,j}\\
&=\delta(q,j>m-n)e_{p,q}^{(1)}e^{(1)}_{i,j}.
\end{align*}
hold, we have
\begin{align*}
&\quad\eqref{2-2}_8+\eqref{2-2}_1+\eqref{2-3}_2+\eqref{2-2}_4+\eqref{2-5}_5+\eqref{2-2}_5+\eqref{2-6}_3+\eqref{2-6}_{15}+\eqref{2-6}_{18}\\
&=\delta(q,j>m-n)W_{p,q}^{(1)}W^{(1)}_{i,j}.
\end{align*}

Next, we compute the sum of terms containing $\delta_{i,q}$.
Since
\begin{align*}
&\quad\eqref{2-3}_4+\eqref{2-6}_7+\eqref{2-6}_{19}\\
&=\delta_{q,i}\delta(j>m-n)\sum_{\substack{w\leq m-n}}\limits e^{(1)}_{w,w}e_{p,j}^{(2)}+\delta_{q,i}\sum_{\substack{x\leq m-n}}\limits e_{p,j}^{(1)}e^{(1)}_{x,x}\\
&\quad-\delta_{q,i}\delta(j\leq m-n)\sum_{\substack{w\leq m-n}}\limits e_{p,j}^{(1)}e^{(1)}_{w,w}\\
&=\delta_{q,i}\delta(j>m-n)\sum_{\substack{w\leq m-n}}\limits e^{(1)}_{w,w}e_{p,j}^{(2)}\\
&\quad+\delta_{q,i}\sum_{\substack{x\leq m-n}}\limits e^{(1)}_{x,x}e_{p,j}^{(1)}+\delta_{q,i}\sum_{\substack{x\leq m-n}}\limits [e_{p,j}^{(1)},e^{(1)}_{x,x}]\\
&\quad-\delta_{q,i}\delta(j\leq m-n)\sum_{\substack{w\leq m-n}}\limits e^{(1)}_{w,w}e_{p,j}^{(1)}-\delta_{q,i}\delta(j\leq m-n)\sum_{\substack{w\leq m-n}}\limits [e_{p,j}^{(1)},e^{(1)}_{w,w}]\\
&=\delta_{q,i}\delta(j>m-n)\sum_{\substack{w\leq m-n}}\limits e^{(1)}_{w,w}e_{p,j}^{(2)}\\
&\quad+\delta_{q,i}\sum_{\substack{x\leq m-n}}\limits e^{(1)}_{x,x}e_{p,j}^{(1)}+\delta_{q,i}\delta(j\leq m-n) e_{p,j}^{(1)}[-2]\\
&\quad-\delta_{q,i}\delta(j\leq m-n)\sum_{\substack{w\leq m-n}}\limits e^{(1)}_{w,w}e_{p,j}^{(1)}-\delta_{q,i}\delta(j\leq m-n) e_{p,j}^{(1)}[-2]\\
&=\delta_{q,i}\delta(j>m-n)\sum_{\substack{w\leq m-n}}\limits e_{w,w}^{(1)}(e^{(1)}_{p,j}+e^{(2)}_{p,j}),\\
&\quad\eqref{2-1}_5+\eqref{2-4}_1+\eqref{2-6}_5+\eqref{2-6}_{16}+\eqref{2-7}_5\\
&=-\delta_{q,i}\alpha_2\sum_{w\leq m-n}\limits e_{w,j}^{(1)}e^{(1)}_{p,w}\\
&\quad-\delta_{q,i}\alpha_2\sum_{x>m-n}\limits e_{p,x}^{(2)}[-1]e^{(1)}_{x,j}+\delta_{i,q}(m-n)\sum_{w\leq m-n}\limits e^{(1)}_{w,j}e^{(1)}_{p,w}\\
&\quad+\delta_{i,q}\alpha_1\sum_{x\leq m-n}\limits e^{(1)}_{p,x}e^{(1)}_{x,j}+\delta_{i,q}\alpha_2\sum_{x\leq m-n}\limits e^{(1)}_{p,x}e^{(1)}_{x,j}\\
&=-\delta_{q,i}\alpha_2\sum_{w\leq m-n}\limits e_{w,j}^{(1)}e^{(1)}_{p,w}\\
&\quad-\delta_{q,i}\alpha_2\sum_{x>m-n}\limits e_{p,x}^{(2)}e^{(1)}_{x,j}+\delta_{i,q}(m-n)\sum_{w\leq m-n}\limits e^{(1)}_{w,j}e^{(1)}_{p,w}\\
&\quad+\delta_{i,q}\alpha_1\sum_{x\leq m-n}\limits e^{(1)}_{x,j}e^{(1)}_{p,x}+\delta_{i,q}\alpha_1\sum_{x\leq m-n}\limits [e^{(1)}_{p,x},e^{(1)}_{x,j}][-2]\\
&\quad+\delta_{i,q}\alpha_2\sum_{x\leq m-n}\limits e^{(1)}_{x,j}e^{(1)}_{p,x}+\delta_{i,q}\alpha_2\sum_{x\leq m-n}\limits [e^{(1)}_{p,x},e^{(1)}_{x,j}][-2]\\
&=-\delta_{i,q}\alpha_2(W^{(2)}_{p,j}+e^{(1)}_{p,j}[-2])+\delta_{i,q}\alpha_1\sum_{x\leq m-n}\limits [e^{(1)}_{p,x},e^{(1)}_{x,j}][-2]\\
&\quad+\delta_{i,q}\alpha_2\sum_{x\leq m-n}\limits [e^{(1)}_{p,x},e^{(1)}_{x,j}][-2]
\end{align*}
hold, we have
\begin{align}
&\quad\eqref{2-1}_5+\eqref{2-3}_4+\eqref{2-4}_1+\eqref{2-6}_5+\eqref{2-6}_7+\eqref{2-6}_{16}+\eqref{2-6}_{19}+\eqref{2-7}_5\nonumber\\
&=\delta_{q,i}\delta(j>m-n)\sum_{\substack{w\leq m-n}}\limits W_{w,w}^{(1)}W^{(1)}_{p,j}-\delta_{i,q}\alpha_2W^{(2)}_{p,j}\nonumber\\
&\quad-\delta_{i,q}\alpha_2^2e^{(1)}_{p,j}[-2]+\delta_{i,q}\alpha_1\sum_{x\leq m-n}\limits(e^{(1)}_{p,j}-\delta_{p,j}e^{(1)}_{x,x})[-2]\nonumber\\
&\quad+\delta_{i,q}\alpha_2\sum_{x\leq m-n}\limits(e^{(1)}_{p,j}-\delta_{p,j}e^{(1)}_{x,x})[-2]\nonumber\\
&=\delta_{q,i}\delta(j>m-n)\sum_{\substack{w\leq m-n}}\limits W_{w,w}^{(1)}W^{(1)}_{p,j}-\delta_{i,q}\alpha_2W^{(2)}_{p,j}\nonumber\\
&\quad-\delta_{i,q}\alpha_2^2e^{(1)}_{p,j}[-2]-\delta_{i,q}(\alpha_1+\alpha_2)(m-n)e^{(1)}_{p,j}[-2]\nonumber\\
&\quad-\delta_{i,q}\delta_{p,j}(\alpha_1+\alpha_2)\sum_{x\leq m-n}\limits e^{(1)}_{x,x}[-2].\label{2-1-2}
\end{align}
Since
\begin{align*}
&\quad\eqref{2-3}_6+\eqref{2-4}_3\\
&=\delta_{i,q}\delta(j>m-n)\alpha_1(m-n)e^{(2)}_{p,j}[-2]-2\delta_{q,i}\delta(j>m-n)\alpha_1\alpha_2e^{(2)}_{p,j}[-2]\\
&=-\delta_{q,i}\delta(j>m-n)\alpha_1(\alpha_1+\alpha_2)e^{(2)}_{p,j}[-2],\\
&\quad\eqref{2-1}_6+\eqref{2-6}_{22}+\eqref{2-7}_1+\eqref{2-7}_7+\eqref{2-1-2}_3+\eqref{2-1-2}_4\\
&=-\delta_{q,i}\alpha_2^2e^{(1)}_{p,j}[-2]-\delta(j\leq m-n)\alpha_1(m-n)e^{(1)}_{p,q}[-2]+\delta_{i,q}\alpha_2(m-n)e^{(1)}_{p,j}[-2]\\
&\quad+2\delta(j\leq m-n)\alpha_1\alpha_2e^{(1)}_{p,j}[-2]-\delta_{i,q}\alpha_2^2e^{(1)}_{p,j}[-2]+\delta_{i,q}(\alpha_1+\alpha_2)(m-n)e^{(1)}_{p,j}[-2]\nonumber\\
&=-\delta_{q,i}\delta(j>m-n)\alpha_2(\alpha_1+\alpha_2)e^{(1)}_{p,j}[-2]
\end{align*}
and
\begin{align*}
\eqref{2-1-2}_5&=-\delta_{i,q}(\alpha_1+\alpha_2)\sum_{x\leq m-n}\limits \delta_{p,j}\partial W^{(1)}_{x,x}\\
&=-\delta_{i,q}\delta_{p,j}(\alpha_1+\alpha_2)\sum_{x\leq m-n}\limits \partial W^{(1)}_{x,x}
\end{align*}
hold, we obtain
\begin{align*}
&\quad\eqref{2-1}_6+\eqref{2-3}_6+\eqref{2-4}_3+\eqref{2-6}_{22}+\eqref{2-7}_1+\eqref{2-7}_7+\eqref{2-1-2}_3+\eqref{2-1-2}_4+\eqref{2-1-2}_5\\
&=-\delta_{q,i}\delta(j>m-n)\alpha_1(\alpha_1+\alpha_2)\partial W^{(1)}_{p,j}-\delta_{i,q}\delta_{p,j}(\alpha_1+\alpha_2)\sum_{x\leq m-n}\limits \partial W^{(1)}_{x,x}.
\end{align*}

Next, we compute the sum of terms containing $\delta_{p,j}$.
Since
\begin{align*}
&\quad\eqref{2-5}_2+\eqref{2-6}_2+\eqref{2-6}_{10}\\
&=\delta_{p,j}\delta(q>m-n)\sum_{x\leq m-n}\limits e^{(1)}_{x,x}e^{(2)}_{i,q}-\delta_{p,j}\delta(q\leq m-n)\sum_{x\leq m-n}\limits e^{(1)}_{x,x}e^{(1)}_{i,q}\\
&\quad+\delta_{p,j}\sum_{x\leq m-n}\limits e^{(1)}_{x,x}e^{(1)}_{i,q}\\
&=\delta_{p,j}\delta(q>m-n)\sum_{x\leq m-n}\limits e^{(1)}_{x,x}(e^{(1)}_{i,q}+e^{(2)}_{i,q})\\
&=\delta_{p,j}\delta(q>m-n)\sum_{x\leq m-n}\limits W^{(1)}_{x,x}W^{(1)}_{i,q},\\
&\quad\eqref{2-1}_2+\eqref{2-1}_4+\eqref{2-6}_{11}+\eqref{2-6}_{13}+\eqref{2-7}_4\\
&=-\delta_{p,j}\alpha_2\sum_{w>m-n}\limits e^{(1)}_{w,q}e^{(2)}_{i,w}+\delta_{p,j}\alpha_2\sum_{w\leq m-n}\limits e^{(1)}_{w,q}e_{i,w}^{(1)}\\
&\quad+\delta_{p,j}\alpha_1\sum_{x\leq m-n}\limits e^{(1)}_{x,q}e^{(1)}_{i,x}\\
&\quad+\delta_{p,j}(m-n)\sum_{x\leq m-n}\limits e^{(1)}_{x,q}e^{(1)}_{i,x}-\delta_{p,j}\alpha_2\sum_{x\leq m-n}\limits e^{(1)}_{x,q}e^{(1)}_{i,x}\\
&=-\delta_{p,j}\alpha_2(W^{(2)}_{i,q}+\alpha_2e^{(1)}_{i,q}[-2])
\end{align*}
hold, we have
\begin{align}
&\quad\eqref{2-1}_2+\eqref{2-1}_4+\eqref{2-5}_2+\eqref{2-6}_2+\eqref{2-6}_{10}+\eqref{2-6}_{11}+\eqref{2-6}_{13}+\eqref{2-7}_4\nonumber\\
&=\delta_{p,j}\delta(q>m-n)\sum_{x\leq m-n}\limits W^{(1)}_{x,x}W^{(1)}_{i,q}-\delta_{p,j}\alpha_2W^{(2)}_{i,q}-\delta_{p,j}\alpha_2^2e^{(1)}_{i,q}[-2].\label{2-1-3}
\end{align}
By a direct computation, we obtain
\begin{align*}
&\quad\eqref{2-1}_7+\eqref{2-1-3}_2\\
&=\alpha_2^2\delta_{p,j}e^{(1)}_{i,q}[-2]-\alpha_2^2\delta_{p,j}e^{(1)}_{i,q}[-2]=0.
\end{align*}
Next, we compute the sum of terms containing $\delta_{i,j}$. By a direct computation, we obtain
\begin{align}
&\quad\eqref{2-2}_3+\eqref{2-6}_{12}\nonumber\\
&=-\sum_{w>m-n}\limits \delta_{i,j}e^{(1)}_{w,q}e^{(2)}_{p,w}+\sum_{x\leq m-n}\limits \delta_{i,j}e_{x,q}^{(1)}e^{(1)}_{p,x}\nonumber\\
&=-\delta_{i,j}(W^{(2)}_{p,q}+\alpha_2e^{(1)}_{p,q}[-2])\nonumber\\
&=-\delta_{i,j}W^{(2)}_{p,q}-\delta_{i,j}\alpha_2e^{(1)}_{p,q}[-2]\label{2-1-4}
\end{align}
and
\begin{align}
&\quad\eqref{2-3}_3+\eqref{2-6}_{17}\nonumber\\
&=-\delta_{i,j}\delta(q\leq m-n)\sum_{x>m-n}\limits e^{(1)}_{x,q}e^{(2)}_{p,x}+\delta_{i,j}\delta(q\leq m-n)\sum_{x>m-n}\limits e^{(1)}_{p,x}e^{(1)}_{x,q}\nonumber\\
&=-\delta_{i,j}\delta(q\leq m-n)\sum_{x>m-n}\limits e^{(1)}_{x,q}e^{(2)}_{p,x}+\delta_{i,j}\delta(q\leq m-n)\sum_{x>m-n}\limits e^{(1)}_{x,q}e^{(1)}_{p,x}\nonumber\\
&\quad+\delta_{i,j}\delta(q\leq m-n)\sum_{x>m-n}\limits [e^{(1)}_{p,x},e^{(1)}_{x,q}][-2]\nonumber\\
&=-\delta_{i,j}\delta(q\leq m-n)W^{(2)}_{p,q}-\delta_{i,j}\delta(q\leq m-n)\alpha_2 e^{(1)}_{p,q}[-2]+\delta_{i,j}\delta(q\leq m-n)(m-n)e^{(1)}_{p,q}[-2].\label{2-1-4.5}
\end{align}
By a direct computation, we obtain
\begin{align*}
&\quad\eqref{2-4}_4+\eqref{2-6}_{21}+\eqref{2-7}_8+\eqref{2-1-1}_3+\eqref{2-1-4}_2+\eqref{2-1-4.5}_2+\eqref{2-1-4.5}_3\\
&=-2\delta_{i,j}\delta(q>m-n)\alpha_2e_{p,q}^{(2)}[-2]+\delta_{i,j}\delta(q\leq m-n)\alpha_1e_{p,q}^{(1)}[-2]+2\delta(q\leq m-n)\alpha_2\delta_{i,j}e^{(1)}_{p,q}[-2]\\
&\quad-\delta_{i,j}\alpha_2e^{(1)}_{p,q}[-2]-\delta_{i,j}e^{(1)}_{p,q}[-2]-\delta_{i,j}\delta(q\leq m-n)\alpha_2 e^{(1)}_{p,q}[-2]+\delta_{i,j}\delta(q\leq m-n)(m-n)e^{(1)}_{p,q}[-2]\\
&=-2\alpha_2\delta(q>m-n)\partial W_{p,q}^{(1)}\delta_{i,j}.
\end{align*}

Next, we compute the sum of terms containing $\delta_{p,q}$.
\begin{align}
&\quad\eqref{2-2}_2+\eqref{2-5}_3+\eqref{2-6}_4+\eqref{2-6}_6\nonumber\\
&=-\sum_{x>m-n}\limits \delta_{q,p}e^{(1)}_{x,j}e^{(2)}_{i,x}-\delta_{p,q}\delta(j\leq m-n)\sum_{w>m-n}\limits e^{(1)}_{w,j}e^{(2)}_{i,w}\nonumber\\
&\quad-\delta_{p,q}\delta(j\leq m-n)\sum_{w\leq m-n}\limits e^{(1)}_{w,j}e^{(1)}_{i,w}+\sum_{x\leq m-n}\limits \delta_{q,p}e^{(1)}_{x,j}e_{i,x}^{(1)}\nonumber\\
&=-\delta_{p,q}(1+\delta(j\leq m-n))(W^{(2)}_{i,j}+\alpha_2e^{(1)}_{i,j}[-2])\nonumber\\
&=-\delta_{p,q}(1+\delta(j\leq m-n))W^{(2)}_{i,j}-\delta_{p,q}\alpha_2(1+\delta(j\leq m-n))e^{(1)}_{i,j}[-2].\label{2-1-7}
\end{align}
By a direct computation, we obtain
\begin{align*}
&\quad\eqref{2-7}_3+\eqref{2-1-1}_2+\eqref{2-1-7}_1\\
&=\delta(j\leq m-n)\alpha_2\delta_{p,q}e^{(1)}_{i,j}+\alpha_2\delta_{p,q}e^{(1)}_{i,j}[-2]\\
&\quad-\alpha_2\delta_{p,q}(1+\delta(j\leq m-n))e^{(1)}_{i,j}[-2]\\
&=0.
\end{align*}
Finally, we compute the sum of the remaining terms. By a direct computation, we obtain
\begin{align*}
&\quad\eqref{2-6}_9+\eqref{2-7}_2\\
&=-\sum_{x,w\leq m-n}\limits \delta_{p,j}\delta_{q,i}e^{(1)}_{w,x}e^{(1)}_{x,w}-\alpha_2\sum_{x\leq m-n}\limits\delta_{p,j}\delta_{q,i}e^{(1)}_{x,x}[-2]\\
&=-\sum_{x,w\leq m-n}\limits \delta_{p,j}\delta_{q,i}W^{(1)}_{w,x}W^{(1)}_{x,w}-\alpha_2\sum_{x\leq m-n}\limits\delta_{p,j}\delta_{q,i}\partial W^{(1)}_{x,x}.
\end{align*}
We complete the proof of \eqref{OPE3-2}.
\section*{Acknowledgement}

The author wishes to express his gratitude to Daniele Valeri. This article is inspired by his lecture at "Quantum symmetries: Tensor categories, Topological quantum field theories, Vertex algebras" held at the University of Montreal. The author is also grateful to Thomas Creutzig for proposing this problem. The author expresses his sincere thanks to Shigenori Nakatsuka for helping me to compute the OPEs and giving me lots of advice and comments.

\section*{Data Availability}
The authors confirm that the data supporting the findings of this study are available within the article and its supplementary materials.

\section*{Competing Interests and Funding}
The authors declare that they have no conflict of interest.
\bibliographystyle{plain}
\bibliography{syuu}

\begin{thebibliography}{10}

\bibitem{A1}
T.~Arakawa.
\newblock Representation theory of {$W$}-algebras.
\newblock {\em Invent. Math.}, 169(2):219--320, 2007,
  https://doi.org/10.1007/s00222-007-0046-1.

\bibitem{A2}
T.~Arakawa.
\newblock Introduction to {W}-algebras and their representation theory.
\newblock In {\em Perspectives in {L}ie theory}, volume~19 of {\em Springer
  INdAM Ser.}, pages 179--250. Springer, Cham, 2017.

\bibitem{BK}
J.~Brundan and A.~Kleshchev.
\newblock Shifted {Y}angians and finite {$W$}-algebras.
\newblock {\em Adv. Math.}, 200(1):136--195, 2006,
  https://doi.org/10.1016/j.aim.2004.11.004.

\bibitem{CE}
T.~Creutzig, D.~E. Diaconescu, and M.~Ma.
\newblock Affine {L}aumon spaces and iterated {$W$}-algebras.
\newblock arXiv; 2206.01600.

\bibitem{DSK1}
A.~De~Sole and V.~G. Kac.
\newblock Finite vs affine {$W$}-algebras.
\newblock {\em Jpn. J. Math.}, 1(1):137--261, 2006,
  https://doi.org/10.1007/s11537-006-0505-2.

\bibitem{DKV}
A.~De~Sole, V.~G. Kac, and D.~Valeri.
\newblock A {L}ax type operator for quantum finite {$W$}-algebras.
\newblock {\em Selecta Math. (N.S.)}, 24(5):4617--4657, 2018.

\bibitem{D1}
V.~G. Drinfeld.
\newblock Hopf algebras and the quantum {Y}ang-{B}axter equation.
\newblock {\em Dokl. Akad. Nauk SSSR}, 283(5):1060--1064, 1985,
  https://doi.org/10.1142/9789812798336.0013.

\bibitem{D2}
V.~G. Drinfeld.
\newblock A new realization of {Y}angians and of quantum affine algebras.
\newblock {\em Dokl. Akad. Nauk SSSR}, 296(1):13--17, 1987.

\bibitem{FFNR}
B.~Feigin, M.~Finkelberg, A.~Negut, and L.~Rybnikov.
\newblock Yangians and cohomology rings of {L}aumon spaces.
\newblock {\em Selecta Math. (N.S.)}, 17(3):573--607, 2011,
  https://doi.org/10.1007/s00029-011-0059-x.

\bibitem{FT}
M.~Finkelberg and A.~Tsymbaliuk.
\newblock Multiplicative slices, relativistic {T}oda and shifted quantum affine
  algebras.
\newblock In {\em Representations and nilpotent orbits of {L}ie algebraic
  systems}, volume 330 of {\em Progr. Math.}, pages 133--304.
  Birkh\"{a}user/Springer, Cham, 2019.

\bibitem{FZ}
I.~B. Frenkel and Y.~Zhu.
\newblock Vertex operator algebras associated to representations of affine and
  {V}irasoro algebras.
\newblock {\em Duke Math. J.}, 66(1):123--168, 1992,
  https://doi.org/10.1215/S0012-7094-92-06604-X.

\bibitem{Gu2}
N.~Guay.
\newblock Cherednik algebras and {Y}angians.
\newblock {\em Int. Math. Res. Not.}, (57):3551--3593, 2005,
  https://doi.org/10.1155/IMRN.2005.3551.

\bibitem{Gu1}
N.~Guay.
\newblock Affine {Y}angians and deformed double current algebras in type {A}.
\newblock {\em Adv. Math.}, 211(2):436--484,
  https://doi.org/10.1016/j.aim.2006.08.007, 2007.

\bibitem{GNW}
N.~Guay, H.~Nakajima, and C.~Wendlandt.
\newblock Coproduct for {Y}angians of affine {K}ac-{M}oody algebras.
\newblock {\em Adv. Math.}, 338:865--911, 2018,
  https://doi.org/10.1016/j.aim.2018.09.013.

\bibitem{KRW}
V.~Kac, S.~S. Roan, and M.~Wakimoto.
\newblock Quantum reduction for affine superalgebras.
\newblock {\em Comm. Math. Phys.}, 241(2-3):307--342, 2003,
  https://doi.org/10.1007/s00220-003-0926-1.

\bibitem{KW1}
V.~G. Kac and M.~Wakimoto.
\newblock Quantum reduction and representation theory of superconformal
  algebras.
\newblock {\em Adv. Math.}, 185(2):400--458, 2004,
  https://doi.org/10.1016/j.aim.2003.12.005.

\bibitem{KW2}
V.~G. Kac and M.~Wakimoto.
\newblock Corrigendum to: ``{Q}uantum reduction and representation theory of
  superconformal algebras'' [{A}dv. {M}ath. {\bf 185} (2004), no. 2, 400--458;
  mr2060475].
\newblock {\em Adv. Math.}, 193(2):453--455, 2005,
  https://doi.org/10.1016/j.aim.2005.01.001.

\bibitem{K3}
R.~Kodera.
\newblock Correction to: ``{O}n {G}uay's evaluation map for affine
  {Y}angians''.
\newblock {\em Algebr. Represent. Theory}, 24(1):269--272, 2021.

\bibitem{K1}
R.~Kodera.
\newblock On {G}uay's evaluation map for affine {Y}angians.
\newblock {\em Algebr. Represent. Theory}, 24(1):253--267, 2021,
  https://doi.org/10.1007/s10468-019-09945-w.

\bibitem{KU}
R.~Kodera and M.~Ueda.
\newblock Coproduct for affine {Y}angians and parabolic induction for
  rectangular {$W$}-algebras.
\newblock {\em Lett. Math. Phys.}, 112(1):Paper No. 3, 37, 2022.

\bibitem{MNT}
A.~Matsuo, K.~Nagatomo, and A.~Tsuchiya.
\newblock Quasi-finite algebras graded by {H}amiltonian and vertex operator
  algebras.
\newblock {\em London Math. Soc. Lecture Note Ser.}, 372:282--329, 2010,
  https://doi.org/10.1017/CBO9780511730054.015.

\bibitem{NT}
K.~Nagatomo and A.~Tsuchiya.
\newblock Conformal field theories associated to regular chiral vertex operator
  algebras. {I}. {T}heories over the projective line.
\newblock {\em Duke Math. J.}, 128(3):393--471, 2005,
  https://doi.org/10.1215/S0012-7094-04-12831-3.

\bibitem{Pr}
A.~Premet.
\newblock Special transverse slices and their enveloping algebras.
\newblock {\em Adv. Math.}, 170(1):1--55, 2002,
  https://doi.org/10.1006/aima.2001.2063.
\newblock With an appendix by Serge Skryabin.

\bibitem{SV}
O.~Schiffmann and E.~Vasserot.
\newblock Cherednik algebras, {W}-algebras and the equivariant cohomology of
  the moduli space of instantons on {$\bold{A}^2$}.
\newblock {\em Publ. Math. Inst. Hautes \'{E}tudes Sci.}, 118:213--342, 2013,
  https://doi.org/10.1007/s10240-013-0052-3.

\bibitem{U6}
M.~Ueda.
\newblock Guay's affine yangians and non-rectangular {$W$}-algebras.
\newblock in preparation.

\bibitem{U4}
M.~Ueda.
\newblock Affine super {Y}angians and rectangular {$W$}-superalgebras.
\newblock {\em J. Math. Phys.}, 63(5):Paper No. 051701, 34, 2022.

\bibitem{Z}
A.~B. Zamolodchikov.
\newblock Infinite extra symmetries in two-dimensional conformal quantum field
  theory.
\newblock {\em Teoret. Mat. Fiz.}, 65(3):347--359, 1985,
  https://doi.org/10.1007/BF01036128.

\bibitem{Zhu}
Y.~Zhu.
\newblock Modular invariance of characters of vertex operator algebras.
\newblock {\em J. Amer. Math. Soc.}, 9(1):237--302, 1996,
  https://doi.org/10.1090/S0894-0347-96-00182-8.

\end{thebibliography}
\end{document}